\newcommand{\R}{\mathbb{R}}
\newcommand{\inr}[1]{\left\langle #1 \right\rangle}
\newcommand{\E}{\mathbb{E}}
\newcommand{\PP}{\mathbb{P}}
\newcommand{\eps}{\varepsilon}
\newtheorem{Theorem}{Theorem}[section]
\newtheorem{Lemma}[Theorem]{Lemma}
\newtheorem{Proposition}[Theorem]{Proposition}
\newtheorem{Corollary}[Theorem]{Corollary}
\theoremstyle{definition}
\newtheorem{Definition}[Theorem]{Definition}
\newtheorem{Remark}[Theorem]{Remark}
\newtheorem{Question}[Theorem]{Question}
\newtheorem{Example}[Theorem]{Example}
\numberwithin{equation}{section}
\title{Structure preservation via the Wasserstein distance}
\author{
Daniel Bartl\footnote{
University of Vienna, Faculty of Mathematics  (daniel.bartl@univie.ac.at)}
 \and
Shahar Mendelson\footnote{ETH Z\"urich, Department of Mathematics 
(shahar.mendelson@gmail.com)}
}
\begin{document}
\maketitle
\begin{abstract}
We show that under minimal assumptions on a random vector $X\in\mathbb{R}^d$ and with high probability, given $m$ independent copies of $X$, the coordinate distribution of each vector $(\langle X_i,\theta \rangle)_{i=1}^m$ is dictated by the distribution of the true marginal $\langle X,\theta \rangle$. 
Specifically, we show that with high probability,
\[\sup_{\theta \in S^{d-1}} \left( \frac{1}{m}\sum_{i=1}^m \left|\langle X_i,\theta \rangle^\sharp - \lambda^\theta_i \right|^2 \right)^{1/2}
\leq c \left( \frac{d}{m}  \right)^{1/4},\]
where  $\lambda^{\theta}_i = m\int_{(\frac{i-1}{m}, \frac{i}{m}]}  F_{ \langle X,\theta \rangle }^{-1}(u)\,du$ and $a^\sharp$ denotes the monotone non-decreasing rearrangement of $a$.
Moreover, this estimate is optimal.

The proof follows from a sharp estimate on the worst Wasserstein distance between a marginal of $X$ and its empirical counterpart, $\frac{1}{m} \sum_{i=1}^m \delta_{\langle X_i, \theta \rangle}$.
\end{abstract}

\section{Introduction} \label{sec:intro}

The study of the way in which structure can be preserved using random sampling is of central importance in modern mathematics. Various notions of structure have been considered over the years, resulting in numerous applications in pure mathematics, statistics and data science.

Let $\mu$ be a centred probability measure on $\R^d$ and let $X$ be distributed according to $\mu$.
We focus on the way in which a typical sample, consisting of $m$ independent copies of $X$, inherits features of that measure.
Informally put, we explore the following:

\begin{tcolorbox}
\begin{Question} \label{Qu:infromal}
Given $X_1,\dots,X_m$ selected independently according to $\mu$, how much ``information" on $\mu$ can be extracted (with high probability) from the set $\{X_1,\dots,X_m\}$?
\end{Question}
\end{tcolorbox}

Since the empirical measure $\mu_m=\frac{1}{m}\sum_{i=1}^m \delta_{X_i}$ converges weakly to $\mu$ almost surely, the main interest in Question \ref{Qu:infromal} is of a quantitative nature.

\vspace{0.5em}
To put this (still rather vague) question in some perspective, let us start with a natural notion that will prove to be instructive, but at the same time rather useless. Consider two independent samples $(X_i)_{i=1}^m$ and $(X_i^\prime)_{i=1}^m$, both selected according to $\mu^{\otimes m}$. Intuitively, if for a typical sample $(X_i)_{i=1}^m$, the set $\{X_i : 1 \leq i \leq m\}$ inherits much of $\mu$'s structure, then $\{X_i : 1 \leq i \leq m\}$ and $\{X_i^\prime : 1 \leq i \leq m\}$ should be ``close" to each other. And, because the way the points are ordered is irrelevant, a natural notion of similarity between the two ``clouds" of points is
\begin{align}
\label{eq:Wasserstein.could}
\inf_\pi \left( \frac{1}{m}\sum_{i=1}^m \left\| X_{\pi(i)} -X_i' \right\|_2^2 \right)^{1/2},
\end{align}
with the infimum taken over all permutations $\pi$ of $\{1,\dots,m\}$.

As it happens, \eqref{eq:Wasserstein.could} is simply the $\mathcal{W}_2$  \emph{Wasserstein distance} between the two empirical measures $\frac{1}{m}\sum_{i=1}^m \delta_{X_i}$ and $\frac{1}{m}\sum_{i=1}^m \delta_{X_i'}$. The $\mathcal{W}_2$ distance is defined on $\mathcal{P}_2(\R^d)$---the set of Borel probability measures on $\R^d$ with finite second moment---by
\[
\mathcal{W}_2(\tau,\nu)=\inf_\Pi \left( \int_{\mathbb{R}^d\times\mathbb{R}^d } \|x-y\|_2^2 \,\Pi(dx,dy) \right)^{1/2}.
\]
Here the infimum taken over all \emph{couplings} $\Pi$, that is, probability measures whose first marginal is $\tau$ and their second marginal is $\nu$.
For detailed surveys on the Wasserstein distance, see, e.g., \cite{figalli2021invitation,villani2021topics}.

Setting $\mu_m=\frac{1}{m}\sum_{i=1}^m\delta_{X_i}$ and $\mu_m^\prime =\frac{1}{m}\sum_{i=1}^m \delta_{X_i^\prime}$, it is straightforward to verify that \eqref{eq:Wasserstein.could} is simply $\mathcal{W}_2(\mu_m,\mu_m^\prime)$. Moreover, a standard convexity argument shows that obtaining high probability estimates on $\mathcal{W}_2(\mu_m,\mu_m')$ and on $\mathcal{W}_2(\mu_m,\mu)$ are equivalent questions, and in what follows we focus on the latter.

While $\mathcal{W}_2(\mu_m,\mu)$ is a natural way of comparing $\mu_m$ and $\mu$, using the Wasserstein distance has a significant drawback. In the high-dimensional setup, $\mathcal{W}_2$ is just \emph{too sensitive}: the typical distance between $\mu_m$ and $\mu$ is almost diametric unless $m$ is exponential in the dimension $d$.
Indeed, although $\mathcal{W}_2(\mu_m,\mu) \to 0$ almost surely as $m\to\infty$,  the pointwise best approximation of $\mu$ by $m$ points, i.e., $\inf_{x_1,\dots,x_m\in \R^d} \mathcal{W}_2(\frac{1}{m}\sum_{i=1}^m\delta_{x_i},\mu)$, typically scales like $m^{-1/d}$ even for well-behaved measures---like the gaussian measure or the uniform measure on the unit cube, see \cite{dudley1969speed}.

The slow decay of $\mathcal{W}_2(\mu_m,\mu)$ is a manifestation of the \emph{curse of dimensionality} phenomenon, rendering that notion of similarity useless for our purposes. It also hints that if one is to overcome the curse of dimensionality, a useful notion of structure preservation should depend only on low-dimensional marginals of $\mu$.

One such  notion is based on the behaviour of the \emph{extremal singular values} of the random matrix whose rows are $X_1,...,X_m$. 
Assume for the sake of simplicity that the centred measure $\mu$ is also isotropic (that is, its covariance is the identity) and consider $S^{d-1}$, the Euclidean unit sphere in $\R^d$, viewed as a class of linear functionals $\{ \inr{\theta,\cdot} : \theta \in S^{d-1}\} \subset L_2(\mu)$. Let $X$ be distributed according to $\mu$, set $X_1,\dots,X_m$ to be independent copies of $X$ and put
\[
\Gamma = \frac{1}{\sqrt{m}} \sum_{i=1}^m \inr{X_i,\cdot}e_i.
\]
The random operator $\Gamma$ is an embedding of $(\R^d,\|\cdot\|_{L_2(\mu)})$ into $\ell_2^m=(\R^m,\|\cdot\|_2)$, and it is natural to identify conditions on $X$ and $m$ under which $\Gamma$ is a $1 \pm \eps$ isomorphism, i.e., that
\begin{align} \label{eq:intro.BY}
\sup_{\theta \in S^{d-1}} \left| \|\Gamma \theta\|_2^2 - 1 \right| \leq \eps.
\end{align}
Clearly, \eqref{eq:intro.BY} means that the extremal singular values of $\Gamma$ are close to $1$, and the extent to which the random sample $X_1,\dots,X_m$ inherits the $L_2$ structure endowed by $\mu$ is measured by \eqref{eq:intro.BY}.

While \eqref{eq:intro.BY} captures significant information on structure preservation, it is still rather crude: it yields very little information on the measure $\mu$---other than exhibiting that it is isotropic. And, if $\mu$ is not isotropic, analogs of \eqref{eq:intro.BY} allow one to recover $\mu$'s covariance structure (see, for example, \cite{lugosi2020multivariate,mendelson2020approximating,mendelson2020robust}  for results of that flavour), but nothing beyond that. In comparison, far more accurate information is `coded' in estimates on the `worst' Wasserstein distance of a one-dimensional marginal of $\mu$ from its empirical counterpart. 
To see why, consider a direction $\theta\in S^{d-1}$, let $\mu^\theta(A)=\mu(\{x\in \R^d : \inr{x,\theta}\in A\})$ be the marginal of $\mu$ in direction $\theta$ and set \[F_{\mu^\theta}(t)=\mu^\theta( (-\infty,t] )
\quad\text{and}\quad
F_{\mu^\theta}^{-1}(u)=\inf\left\{ t\in\R : F_{\mu^\theta}(t)\geq u \right\}\]
to be the \emph{distribution function} of $\mu^\theta$ and its \emph{(right-)inverse}.

\begin{Definition}
\label{def:msW}
For every $\mu,\nu\in\mathcal{P}_2(\R^d)$, the \emph{max-sliced Wasserstein distance} is defined by
\[ \mathcal{SW}_2(\mu,\nu)=\sup_{\theta\in S^{d-1}} \mathcal{W}_2\left(\mu^\theta,\nu^\theta\right). \]
\end{Definition}

One can show that $\mathcal{SW}_2$ is a metric on $\mathcal{P}_2(\mathbb{R}^d)$ that endows the same topology as $\mathcal{W}_2$.
For the proof of that fact and more information on similar notions, see, e.g.,  \cite{deshpande2019max,lin2021projection,manole2019minimax,nietert2022statistical,olea2022generalization,paty2019subspace}.

\vspace{0.5em}
An observation that is used frequently in what follows is a closed-form of the Wasserstein distance between one-dimensional measures:
\begin{equation} 
\label{eq:co-monotone}
\mathcal{SW}_2(\mu_m,\mu)
=\sup_{\theta\in S^{d-1}} \left( \int_0^1 \left( F_{\mu^\theta_m}^{-1}(u) - F_{\mu^\theta}^{-1}(u) \right)^2 \,du \right)^{1/2},
\end{equation}
see, for example, \cite{ruschendorf1985wasserstein} and  Lemma \ref{lem:Wasserstein.via.inverse.functions}.

Using the representation \eqref{eq:co-monotone}, the following is of central importance:
\begin{tcolorbox}
An upper bound on  $\mathcal{SW}_2(\mu_m,\mu)$ implies uniform concentration of the coordinate distribution of the vectors $(\inr{X_i,\theta})_{i=1}^m$ around a well-determined set of values, endowed by $\mu$.
\end{tcolorbox}

To be more precise, let $\theta\in S^{d-1}$ and
for $1\leq i\leq m$ set
\[
\lambda^\theta_i= m\int_{(i-1)/m}^{i/m} F_{\mu^\theta}^{-1}(u) \,du.
\]
One may show (see Section \ref{sec:preliminary} for the proof) that
\begin{align} \label{eq:coordinate.distribution}
\sup_{\theta\in S^{d-1}} \left( \frac{1}{m}\sum_{i=1}^m \left| \inr{X_i,\theta}^\sharp - \lambda^\theta_i \right|^2 \right)^{1/2} \leq \mathcal{SW}_2(\mu_m,\mu),
\end{align}
where here and throughout this article,  $(a_i^\sharp)_{i=1}^m$ is non-decreasing rearrangement of $(a_i)_{i=1}^m$.
Equation \eqref{eq:coordinate.distribution}  means that if $\mathcal{SW}_2(\mu_m,\mu)$ is `small', the random vectors $(\inr{X_i,\theta})_{i=1}^m$ all inherit---on the same event---the distribution of the corresponding one-dimensional marginals of $\mu$.

\vspace{0.5em}
\begin{tcolorbox}
Roughly put, our main result is that
 under minimal assumptions on $X$ and with high probability, $\mathcal{SW}_2(\mu_m,\mu)$ is indeed small:
\begin{equation} \label{eq:optimal-est-into-1}
\mathcal{SW}_2(\mu_m,\mu)
\leq c\left( \frac{d}{m}  \right)^{1/4}  ,
\end{equation}
where $c$ is a suitable constant---and this estimate is optimal.

In particular, using the above notation, we have that
\begin{equation} \label{eq:optimal-est-into-2}
\sup_{\theta \in S^{d-1}} \left( \frac{1}{m}\sum_{i=1}^m \left| \inr{X_i,\theta}^\sharp - \lambda^\theta_i \right|^2 \right)^{1/2}
\leq c \left( \frac{d}{m}  \right)^{1/4}.
\end{equation}
\end{tcolorbox}

\begin{Remark}
Set $q^\theta_i = F_{\mu^\theta}^{-1}\left(\frac{i}{m}\right)$ for $i < m$ and $q^\theta_m = q^\theta_{m-1}$.
Inequality \eqref{eq:optimal-est-into-2} remains valid if the averaged quantiles $(\lambda^\theta_i)_{i=1}^m$ are replaced by the quantiles $(q^\theta_i)_{i=1}^m$---see Lemma \ref{lem:Gamma.theta.quantils.averaged.vs.not} for the details.
	Moreover, we show in Lemma \ref{lem:Gamma.theta.geq.SW2} that under minimal assumptions on $X$,
\[
	\sup_{\theta\in S^{d-1}} \left( \frac{1}{m}\sum_{i=1}^m \left| \inr{X_i,\theta}^\sharp - \lambda^\theta_i \right|^2 \right)^{1/2} 
	\geq \mathcal{SW}_2(\mu_m,\mu) - c \left(\frac{1}{m}\right)^{1/4}.
\]
	Thus, inequality \eqref{eq:coordinate.distribution} can be almost reversed, and
	the optimality of \eqref{eq:optimal-est-into-1} ensures that \eqref{eq:optimal-est-into-2} is also optimal.
\end{Remark}

To explain what is meant by ``minimal assumptions on $X$", let us compare \eqref{eq:optimal-est-into-1} with \eqref{eq:intro.BY}.
Both  estimates offer some form of uniform control on the one-dimensional marginals of $X$, with one (trivially) stronger than the other. 
Indeed, set
\begin{align} \label{eq:def.rho.d.m}
\rho_{d,m}
&=
\sup_{\theta\in S^{d-1}} \left|  \frac{1}{m}\sum_{i=1}^m \inr{X_i,\theta}^2  -1 \right|
=\sup_{\theta\in S^{d-1}} \left| \|\Gamma\theta\|_2^2-1 \right|
\end{align}
and recall that by isotropicity $\E\inr{X,\theta}^2=1$.
In particular,
\[ \rho_{d,m}=\sup_{\theta\in S^{d-1}} \left| \int_0^1 ( F_{\mu^\theta_m}^{-1}(u))^2\,du - \int_0^1 ( F_{\mu^\theta}^{-1}(u))^2 \,du \right| ;\]
thus, $\rho_{d,m}$ is the maximal difference between the second moments of the true and empirical (inverse) distributions of marginals $\| F_{\mu^\theta_m}^{-1}\|_{L_2}^2$ and $\| F_{\mu^\theta}^{-1}\|_{L_2}^2$.
In contrast, following \eqref{eq:co-monotone}, $\mathcal{SW}_2(\mu_m,\mu)$ is the maximal $L_2$-distance $\|  F_{\mu^\theta_m}^{-1}- F_{\mu^\theta}^{-1}\|_{L_2}$, and it is straightforward to verify that if $\mathcal{SW}_2( \mu_m,\mu)\leq 1$, then
	\begin{align}
	\label{eq:Bai.Yin}
	\rho_{d,m}
	\leq 3\cdot \mathcal{SW}_2( \mu_m,\mu )
	\end{align}
(see Lemma \ref{lem:Wasserstein.implies.Bai.Yin} for the proof).

A priori, there is no reason why \eqref{eq:Bai.Yin} should be anything but a crude upper estimate: expecting that  $F_{\mu^\theta_m}^{-1}$ and $F_{\mu^\theta}^{-1}$ are close in $L_2$  solely because their second moments are similar seems unrealistically optimistic.
Surprisingly, our main result  shows that \eqref{eq:Bai.Yin} can be reversed  to some extent, and $\mathcal{SW}_2( \mu_m,\mu)$ can be controlled in terms of $\rho_{d,m}$:

\begin{tcolorbox}
\begin{Theorem}
\label{thm:Wasserstein.small}
	Let $X$ be centred and isotropic, and assume that $\sup_{\theta \in S^{d-1}} \|\inr{X,\theta}\|_{L_q} \leq L$ for some $q\geq 4$.
Then there are absolute constants $c_0,c_1,c_2$ and a constant $c_3$ that depends only on $q$ and $L$ such that the following holds.

	Let  $0<\Delta\leq c_0$ and set $m \geq c_1 \frac{d}{\Delta}$.
	With probability at least $1-\exp(-c_2 \Delta m)$,
	\[  \mathcal{SW}_2(\mu_m,\mu)
	\leq c_3
	\begin{cases}
	 \rho_{d,m}^{1/2}+\Delta^{1/4}  &\text{if } q>4,\\
\\
	\rho_{d,m}^{1/2}+\Delta^{1/4}\log\left(\frac{1}{\Delta}\right)  &\text{if } q=4.
	\end{cases} \]
\end{Theorem}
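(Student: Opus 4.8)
\textit{Setup.} By \eqref{eq:co-monotone}, writing $a_\theta:=F_{\mu_m^\theta}^{-1}$ and $b_\theta:=F_{\mu^\theta}^{-1}$, the quantity to bound is $\sup_{\theta\in S^{d-1}}\|a_\theta-b_\theta\|_{L_2[0,1]}$, and I would split $[0,1]$ into a bulk $[c\Delta,1-c\Delta]$ and extremes $[0,c\Delta]\cup[1-c\Delta,1]$. The class of slabs $\{x:\langle x,\theta\rangle\le t\}$ has VC dimension $O(d)$, so a relative (variance--sensitive) deviation inequality for VC classes, together with $m\ge c_1d/\Delta$, yields an event of probability at least $1-\exp(-c_2\Delta m)$ on which $|F_{\mu_m^\theta}(t)-F_{\mu^\theta}(t)|\le\eta_\theta(t):=c\bigl(\sqrt{\Delta F_{\mu^\theta}(t)(1-F_{\mu^\theta}(t))}+\Delta\bigr)$ for all $\theta,t$ (with an extra $\log(1/\Delta)$ when $q=4$); in particular the global deviation is $\le c\sqrt\Delta$. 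The elementary fact $\|F-G\|_\infty\le\eta\Rightarrow F^{-1}(u-\eta)\le G^{-1}(u)\le F^{-1}(u+\eta)$ transports this into the pointwise quantile bound $|a_\theta(u)-b_\theta(u)|\le b_\theta(u+\eta_\theta(u))-b_\theta(u-\eta_\theta(u))$ valid on $[c\Delta,1-c\Delta]$, where $\eta_\theta(u)$ is the relative deviation at the $u$--th quantile and satisfies $\eta_\theta(u)\le\tfrac12\min(u,1-u)$ once $\Delta\le c_0$.

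\textit{The bulk.} Using the $L_q$ decay $|b_\theta(v)|=|F_{\mu^\theta}^{-1}(v)|\le c_L(v\wedge(1-v))^{-1/q}$ (Markov plus $\|\langle X,\theta\rangle\|_{L_q}\le L$) and $(x-y)^2\le(x-y)(|x|+|y|)$, I would bound $\int_{c\Delta}^{1-c\Delta}(b_\theta(u+\eta_\theta(u))-b_\theta(u-\eta_\theta(u)))^2\,du$ by integrating through monotonicity/telescoping of $b_\theta^2$: on the range where $b_\theta$ keeps a constant sign this costs $\le c_{q,L}\Delta^{1-2/q}$; on the short range where $b_\theta(u\pm\eta_\theta(u))$ straddle $0$ it costs $\le c_{q,L}\sqrt\Delta$, because centredness, isotropy and $\|\langle X,\theta\rangle\|_{L_q}\le L$ force $\PP(\langle X,\theta\rangle>0)\wedge\PP(\langle X,\theta\rangle<0)\ge\kappa_{q,L}$, so that straddle range lies inside $[\kappa_{q,L},1-\kappa_{q,L}]$ where $|b_\theta|=O_{q,L}(1)$. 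Hence $\|a_\theta-b_\theta\|^2_{L_2[c\Delta,1-c\Delta]}\le c_{q,L}\sqrt\Delta$, uniformly in $\theta$.

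\textit{The extremes, and where $\rho_{d,m}$ enters.} On $[0,c\Delta]\cup[1-c\Delta,1]$ the $b_\theta^2$--part is $\le c_{q,L}\Delta^{1-2/q}$ by Markov, so it remains to control $E_\theta:=\frac1m\sum_{i\in I_\theta}\langle X_i,\theta\rangle^2$, the aggregate of the $c\Delta m$ coordinates of $(\langle X_i,\theta\rangle)_i$ of largest modulus. Fix $\tau:=L^2\Delta^{-2/q}$, so $\PP(\langle X,\theta\rangle^2>\tau)\le\Delta$ for every $\theta$: the coordinates in $I_\theta$ with $\langle X_i,\theta\rangle^2\le\tau$ contribute $\le\tfrac{c\Delta m}{m}\tau=cL^2\Delta^{1-2/q}$, and the rest lie in $\{i:\langle X_i,\theta\rangle^2>\tau\}$, where
\[
\frac1m\!\sum_{i:\langle X_i,\theta\rangle^2>\tau}\!\langle X_i,\theta\rangle^2=\rho_\theta+\EXP\bigl[(\langle X,\theta\rangle^2-\tau)_+\bigr]-(P_m-P)\bigl[\langle\,\cdot\,,\theta\rangle^2\wedge\tau\bigr]+\tau\cdot\frac{\#\{i:\langle X_i,\theta\rangle^2>\tau\}}{m},
\]
with $\rho_\theta=\frac1m\sum_i\langle X_i,\theta\rangle^2-1$, $|\rho_\theta|\le\rho_{d,m}$ --- which is exactly where the $\rho_{d,m}$ of the statement appears. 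The second and fourth terms are $\le c_{q,L}\Delta^{1-2/q}$ (by Markov and by the relative bound above, respectively). The third --- the truncated second--moment deviation --- is the delicate one: writing $\langle x,\theta\rangle^2\wedge\tau=\int_0^\tau\IND\{\langle x,\theta\rangle^2>s\}\,ds$, Fubini and the relative bound give $|(P_m-P)[\langle\,\cdot\,,\theta\rangle^2\wedge\tau]|\le c\sqrt\Delta\int_0^\tau\sqrt{\PP(\langle X,\theta\rangle^2>s)}\,ds+c\Delta\tau$, and since $\PP(\langle X,\theta\rangle^2>s)\le L^qs^{-q/2}$ the integral is $O_{q,L}(1)$ for $q>4$ and $O_{q,L}(\log(1/\Delta))$ for $q=4$, while $\Delta\tau=L^2\Delta^{1-2/q}\le L^2\sqrt\Delta$. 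Thus on the good event $\sup_\theta E_\theta\le c_{q,L}(\rho_{d,m}+\sqrt\Delta)$ for $q>4$, and $\le c_{q,L}(\rho_{d,m}+\sqrt\Delta\,\log(1/\Delta))$ for $q=4$.

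\textit{Conclusion and the main obstacle.} Adding the bulk and extreme contributions, and using $\Delta^{1-2/q}\le\sqrt\Delta$ for $q\ge4$, gives (on an event of probability $\ge1-\exp(-c_2\Delta m)$) that $\mathcal{SW}_2(\mu_m,\mu)^2=\sup_\theta\|a_\theta-b_\theta\|_{L_2}^2$ is $\le c_{q,L}(\rho_{d,m}+\sqrt\Delta)$ for $q>4$ and $\le c_{q,L}(\rho_{d,m}+\sqrt\Delta\,\log^2(1/\Delta))$ for $q=4$; taking square roots gives the theorem. The genuinely hard part is the control of $E_\theta$: one needs the mass of the heaviest $\asymp\Delta m$ coordinates of $(\langle X_i,\theta\rangle)_i$ uniformly over $\theta\in S^{d-1}$, and for heavy--tailed $X$ a naive union bound over the possible ``heavy'' index sets fails (a single large observation --- the one--big--jump phenomenon --- blows up the maximum over subsets of that size). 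The point of the decomposition above is to avoid this by truncating at a fixed level and charging the few genuinely large observations to $\rho_{d,m}$, keeping (via the layer--cake identity) the truncated--second--moment deviation at the scale $\sqrt\Delta$; this last estimate is tight, the governing integral $\int_0^\tau\sqrt{\PP(\langle X,\theta\rangle^2>s)}\,ds$ diverging logarithmically precisely at $q=4$, which accounts for the logarithmic loss in that case. Establishing the relative Kolmogorov--Smirnov bound with the correct (log--free for $q>4$) constants, and calibrating $\tau$ against the split level $c\Delta$, is the technical heart.
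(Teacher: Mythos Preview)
Your proposal follows essentially the same architecture as the paper's proof: split $[0,1]$ into a bulk $[\delta,1-\delta]$ (with $\delta\sim\Delta$ up to logs) and tails; on the bulk, use a variance-sensitive uniform DKW inequality for the VC class of half-spaces to get the quantile sandwich $F_{\mu_m^\theta}^{-1}(u)\in[F_{\mu^\theta}^{-1}(\psi_-(u)),F_{\mu^\theta}^{-1}(\psi_+(u))]$ (this is Theorem~\ref{thm:ratio}/Lemma~\ref{lem:inverse.estimate}); then estimate the resulting deterministic integral; and finally control the empirical tail by charging it to $\rho_{d,m}$.

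The genuine differences are in the two technical steps. For the \emph{bulk}, your same-sign trick $(x-y)^2\le|x^2-y^2|$ lets you bound $\int(b_\theta(\psi_+)-b_\theta(\psi_-))^2$ by the difference $\int b_\theta^2\circ\psi_+-\int b_\theta^2\circ\psi_-$ directly, plus a short ``straddle'' contribution; the paper instead expands the full square and handles the cross term separately (Lemmas~\ref{lem:modulus.quadratic}--\ref{lem:modulus.cross}). Your route is a bit shorter, but note that ``telescoping'' still requires the change-of-variables/Jacobian control of Lemma~\ref{lem:modulus.quadratic}, since $\eta_\theta$ depends on $u$; also, the straddle argument needs the non-trivial fact that $\PP(\inr{X,\theta}>0)$ is bounded away from $0$ and $1$ uniformly in $\theta$ (true, via $\E\inr{X,\theta}=0$, Paley--Zygmund on $\inr{X,\theta}^2$, and H\"older, but worth stating). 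For the \emph{extremes}, your truncation-at-$\tau$ plus layer-cake argument is correct and is where $\rho_{d,m}$ enters, exactly as you say; but the paper's Lemma~\ref{lem:tail.integral.bounded.by.Bai.Yin.plus.ratio} achieves the same thing more cheaply by writing $\int_U F_{\mu_m^\theta}^{-1}(u)^2\,du = \int_U F_{\mu^\theta}^{-1}(u)^2\,du + (\|\Gamma\theta\|_2^2-1) + B_\theta$, where $B_\theta$ is the difference of \emph{bulk} integrals of $F_{\mu^\theta}^{-1}(u)^2$ and $F_{\mu_m^\theta}^{-1}(u)^2$---already controlled by the bulk analysis. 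This avoids re-running an empirical-process estimate for the truncated second moment. One small caveat: the relative DKW bound you state (log-free) is not what the paper establishes---Theorem~\ref{thm:ratio} carries a $\log(e/\mu(A))$ factor---but this factor is absorbed in the subsequent integration when $q>4$ and produces exactly the $\log(1/\Delta)$ when $q=4$, so your endpoint is right.
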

\end{tcolorbox}

\subsubsection*{On the optimality of Theorem \ref{thm:Wasserstein.small}}

The question of the optimality of Theorem \ref{thm:Wasserstein.small} is explored in Section \ref{sec:lower-W2}. 
For example, we show that the estimate of $(d/m)^{1/4}$  is sharp in general---even if one allows for a significantly weaker probability estimate---of constant probability rather than of $1-\exp(-cd)$.
Moreover, the best estimate that one can hope to get with  probability at least $1-\exp(-c\Delta m)$ is $\Delta^{1/4}$,  even if the random vector $X$ is well-behaved, e.g., isotropic and $L$-subgaussian\footnote{Recall that if a random vector $X$ is isotropic and $L$-subgaussian then $\|\inr{X,\theta}\|_{L_p} \leq L \sqrt{p}$ for every $p\geq 2$ and $\theta\in S^{d-1}$.}. 
The latter stems from a simple one-dimensional phenomenon: it is straightforward to verify that for $\Delta\leq 1/4$ and $m\geq 4$, if $\mu^\theta$ is symmetric and $\{-1,1\}$-valued, then
	\begin{align}
	\label{eq:int.Delta.lower.bound.bernoulli}
	\int_{1/4}^{3/4}  \left( F_{\mu^{\theta}_m}^{-1}(u) - F_{\mu^{\theta}}^{-1}(u)\right)^2 \,du
	\geq c_1\sqrt\Delta
	\end{align}
with probability at least $c_2\exp(-c_3\Delta m)$ (see Lemma \ref{lem:Delta.1.4} for the proof). Thus, invoking the representation \eqref{eq:co-monotone} it is evident that if $X$ is uniformly distributed in  $\{-1,1\}^d$, then with probability at least $c_2\exp(-c_3\Delta m)$,
	\begin{align}
	\label{eq:Delta.1.4.sharp}
	\mathcal{SW}_2(\mu_m,\mu)
	\geq \sqrt{c_1} \Delta^{1/4};
	\end{align}
	and the fact that $X$ is subgaussian does not mean that the error estimate in Theorem \ref{thm:Wasserstein.small} can be better than $\sim \Delta^{1/4}$.

At the same time, once the random vector $X$ is `regular' in a certain sense, the estimate in Theorem \ref{thm:Wasserstein.small} can be improved to $\sqrt \Delta$ (up to logarithmic factors)---in which case  \eqref{eq:Bai.Yin} truly can be reversed---see Theorem \ref{cor:Wasserstein.small.log-concave.subgaussian}.
This phenomenon is in line with the results from Bobkov-Ledoux \cite{bobkov2019one}, which are focused on the behaviour of $\E \mathcal{W}_2(\mu_m,\mu)$ in the one-dimensional framework.
	In particular Bobkov and Ledoux explore conditions under which the error estimate is  $m^{-1/2}$  (up to logarithmic factors) and show that this can only happen under strong regularity assumptions on $F_\mu^{-1}$.
	Since the focus of this article is on the effect of high dimensions in the context of \emph{general} random vectors, we chose not pursue a similar path here.
	We only consider one family of regular random vectors---isotropic, log-concave random vectors---see Theorem \ref{cor:Wasserstein.small.log-concave.subgaussian}.

\subsubsection*{Why Theorem \ref{thm:Wasserstein.small} is surprising}

Theorem \ref{thm:Wasserstein.small}  implies a significant improvement on the state-of-the-art geometric estimates related to the study of the extremal singular values of the random matrix $\Gamma$. 
Sharp upper bounds on $\rho_{d,m}$ follow from (crude) structural information on the vectors $\{\Gamma \theta : \theta \in S^{d-1}\}$ (see, e.g., \cite{mendelson2014singular,tikhomirov2018sample}), and are based on (necessary) assumptions on the isotropic random vector $X$---namely that $\|X\|_2^2/d$ has a well-behaved tail-decay and that $\sup_{\theta \in S^{d-1}} \|\inr{X,\theta}\|_{L_q} \leq L$ for some $q \geq 4$.
At the heart of the arguments in \cite{mendelson2014singular,tikhomirov2018sample}  is that under those assumptions the following holds on a high probability event:
\begin{description}
\item{$\bullet$}
For each vector $w=\sqrt{m} \Gamma \theta=(\inr{X_i,\theta})_{i=1}^m$, the contribution of its largest $\Delta m$ coordinates to $\|w\|_2$  is not ``too big".
\item{$\bullet$}
The remaining coordinates ``live" within a multiplicative envelope, dictated by the assumed tail-decay.
For example, under an $L_q-L_2$ norm equivalence assumption one has that for every such vector $w$ and $k=\Delta m,\dots, m$
\begin{equation} \label{eq:multi-envelope}
w^\ast_k \leq c(L)(m/k)^{1/q},
\end{equation}
where $w^\ast$ denotes the monotone non-increasing rearrangement of $(|w_i|)_{i=1}^m$.
\end{description}

When $q>4$, this structural information suffices for establishing the optimal bound---that $\rho_{d,m} \leq c \sqrt{d/m}$ with high probability.

Although the structural information in \eqref{eq:multi-envelope} plays an instrumental role in obtaining sharp estimates on $\rho_{d,m}$, it is still significantly weaker than the outcome of Theorem \ref{thm:Wasserstein.small}.
Indeed, if $X$ satisfies a non-asymptotic `Bai-Yin' estimate, i.e., $\rho_{d,m}\leq c \sqrt\Delta$  with probability at least $1-\eta$, then it  follows from Theorem \ref{thm:Wasserstein.small} that with probability at least $1-\exp(-c_2 \Delta m)-\eta$,
\[  \mathcal{SW}_2(\mu_m,\mu)
	\leq
	\begin{cases}
	c_4 \Delta^{1/4} &\text{if } q>4,\\
\\
	c_4 \Delta^{1/4}\log\left(\frac{1}{\Delta}\right) &\text{if } q=4.
	\end{cases}  \]
Thus, the monotone non-decreasing rearrangement of each $(\inr{X_i,\theta})_{i=1}^m$ satisfies that
\[
\sup_{\theta \in S^{d-1}} \left( \frac{1}{m}\sum_{i=1}^m \left| \inr{X_i,\theta}^\sharp - \lambda^\theta_i \right|^2 \right)^{1/2}
\leq
	\begin{cases}
	c_5 \Delta^{1/4} &\text{if } q>4,\\
\\
	c_5 \Delta^{1/4}\log\left(\frac{1}{\Delta}\right) &\text{if } q=4,
	\end{cases}
\]
which is far more accurate than the existence of a one-sided multiplicative envelope function for the non-increasing rearrangement of vectors $(|\inr{X_i,\theta}|)_{i=1}^m$ as in \eqref{eq:multi-envelope}.

\begin{Remark}
When $q=4$, the state-of-the-art estimate on $\rho_{d,m}$ due to Tikhomirov \cite{tikhomirov2018sample} has an additional multiplicative logarithmic factor in $m/d$. And in a similar manner, the estimate in Theorem \ref{thm:Wasserstein.small} when $q=4$ also has an additional factor of $\log\frac{1}{\Delta}$ when $\Delta\geq d/m$.
The Bai-Yin asymptotics \cite{bai1993limit} imply that when $q=4$ the logarithmic factor in $\rho_{d,m}$ should disappear when $d,m\to\infty$ while keeping the ratio $d/m$ fixed. 
It is reasonable to expect that the same is true in regard to Theorem \ref{thm:Wasserstein.small} and that the $\log\frac{1}{\Delta}$ factor is superfluous.
\end{Remark}

\begin{Remark}
One scenario of particular interest is when the random vector $X$ is rotation invariant. In such a situation, an upper estimate on $\mathcal{SW}_2(\mu_m,\mu)$ combined with \eqref{eq:coordinate.distribution} implies that $\Gamma S^{d-1}$ inherits $\mu$'s invariance, in the sense that
\[ \sup_{\theta,\theta'\in S^{d-1}} \left( \frac{1}{m}\sum_{i=1}^m \left| \inr{X_i,\theta}^\sharp - \inr{X_i,\theta'}^\sharp  \right|^2 \right)^{1/2}
\leq 2\mathcal{SW}_2(\mu_m,\mu).\]
In other words, after replacing  vectors in $\Gamma S^{d-1}$ by their monotone rearrangement, the Euclidean diameter of the resulting set is bounded by $2\mathcal{SW}_2(\mu_m,\mu)$.
In particular, on the event in which \eqref{eq:optimal-est-into-1} holds,
\begin{equation} \label{eq:optimal-est-into-3}
\sup_{\theta,\theta^\prime \in S^{d-1}} \left( \frac{1}{m}\sum_{i=1}^m \left| \inr{X_i,\theta}^\sharp - \inr{X_i,\theta^\prime}^\sharp \right|^2 \right)^{1/2}
\leq 2c \left( \frac{d}{m}  \right)^{1/4} .
\end{equation}
The estimate in \eqref{eq:optimal-est-into-3} offers a way of proving that certain random functional are almost constant on the sphere and plays an instrumental role in the construction of a  non-gaussian Dvoretzky-Milman embedding in \cite{bartl2022nongaussian}.
\end{Remark}

We end this section with some comments on the behaviour of the (max-sliced) Wasserstein distance between the true measure and the empirical one when the true measure need not be isotropic. 

Consider $Y=\Sigma^{1/2}X$ where 
$X$ is a centred and isotropic random vector, and $\Sigma$ is a symmetric, positive-definite matrix. Denote by $\nu$ the measure endowed by $Y$, let  $\nu^\theta$ be its marginal distribution in the direction $\theta\in S^{d-1}$, and set $\sigma^2(\theta)= \mathbb{V}{\rm ar}[\langle Y,\theta\rangle]$.
The empirical measure of $\nu$ is denoted by $\nu_m$, and its marginal distribution in the direction $\theta$ is denoted by $\nu_m^\theta$.

Theorem \ref{thm:Wasserstein.small} immediately implies direction-dependent bounds on  $\mathcal{W}_2(\nu_m^\theta,\nu^\theta)$.
Indeed, if $X,\Delta$ and $m$ satisfy the assumptions of Theorem \ref{thm:Wasserstein.small}, then with probability at least $1-\exp(-c\Delta m)$,  for every $\theta\in S^{d-1}$, 
\[
 \mathcal{W}_2(\nu_m^\theta,\nu^\theta)
\leq  C
\begin{cases}
 \sigma(\theta) (\rho_{d,m}^{1/2} + \Delta^{1/4}) &\text{if } q>4, 
  \\
  \\
  \sigma(\theta) (\rho_{d,m}^{1/2} + \Delta^{1/4}\log(\frac{1}{\Delta})) &\text{if } q=4.
 \end{cases}
\]
Indeed, this  follows from the positive homogeneity of the Wasserstein distance $$
\mathcal{W}_2(\nu_m^\theta,\nu^\theta) = \sigma(\theta) \mathcal{W}_2(\mu_m^\theta,\mu^\theta) \leq \sigma(\theta)\mathcal{SW}_2(\mu_m,\mu),
$$ 
and the estimate on $\mathcal{SW}_2(\mu_m,\mu)$ follows from Theorem \ref{thm:Wasserstein.small}.

The behaviour of  $\mathcal{SW}_2(\nu_m,\nu)$ was recently studied in \cite{boedihardjo2024sharp}.
It was shown there that if $Y$ is symmetric and satisfies $\|Y\|_2\leq r$ almost surely for some $r\geq 1$, then for $\widetilde{\nu}_m=\frac{1}{2m}\sum_{i=1}^m (\delta_{Y_i}+ \delta_{-Y_i})$, and denoting by $\|\Sigma\|_{\rm op}$  the operator norm of $\Sigma$,
\begin{align}
\label{eq:SW.2.March}
 \mathbb{E}\left[ \mathcal{SW}_2\left( \widetilde{\nu}_m, \nu \right) \right]
\leq c \left( \left( \frac{ r^2 \log(em)}{m} \right)^{1/2} +  \left( \frac{ \|\Sigma\|_{\rm op} r^2  \log(em)}{m} \right)^{1/4} \right).
\end{align}
To put \eqref{eq:SW.2.March} in context,  an obvious lower bound on $r$ holds because $r^2 \geq \mathbb{E}[\|Y\|_2^2]= {\rm trace}(\Sigma)$.
Moreover, there are many natural situations in which $r$ must be significantly bigger than that trivial lower bound.
As a result, in the isotropic case ($\Sigma={\rm Id}$), $r^2\geq d$, and \eqref{eq:SW.2.March} is at least $(\frac{d\log(em)}{m})^{1/4}$, which is off by a logarithmic factor from our estimate of $(\frac{d}{m})^{1/4}$

Finally, while our focus is on $\mathcal{SW}_2$, the behaviour  of $\sup_{\theta\in \Theta}\mathcal{W}_2(\mu^\theta_m,\mu^\theta)$ for an arbitrary subset $\Theta \subset S^{d-1}$ was analysed recently in  \cite{bartl2024empirical} when $X$ is the standard gaussian random vector. 
Unlike the case $\Theta=S^{d-1}$, sharp bounds on $\sup_{\theta\in \Theta}\mathcal{W}_2(\mu^\theta_m,\mu^\theta)$ for  $\Theta \subset S^{d-1}$  are not known.

\subsection{A Warm up exercise: Theorem \ref{thm:Wasserstein.small} in the log-concave case}
\label{sec:warm.up}

The proof of Theorem \ref{thm:Wasserstein.small} is rather involved.
It is useful to present some of the ideas used in the argument in a simpler context---when $X$ is a centred, isotropic, log-concave random vector\footnote{Recall that a random vector is log-concave if it has a density that is a log-concave function.}.
As it happens, for such random vectors the estimate in Theorem \ref{thm:Wasserstein.small} can be improved from $\Delta^{1/4}$ to $\sqrt \Delta$ (up to logarithmic factors). 
At the same time, it is important to keep in mind that  $\Delta^{1/4}$ is the best one can hope for unless $X$ is regular in a strong sense---see Section \ref{sec:lower-W2}.

\begin{Theorem}
\label{cor:Wasserstein.small.log-concave.subgaussian}
There are absolute constants $c_0,c_1,c_2$ and $c_3$ for which the following holds.
Let $X$ be a centred, isotropic and log-concave random vector in $\R^d$.
Let $0<\Delta\leq c_0$ and set $m\geq c_1 \frac{d}{\Delta}$.
Then with probability at least $1-\exp(-c_2 \sqrt{\Delta m} \log^2(\frac{1}{\Delta}))$,
	\[
\mathcal{SW}_2( \mu_m,\mu )
	\leq c_3  \sqrt\Delta  \log^2\left( \frac{1}{\Delta} \right).
\]
If $X$ is, in addition, $L$-subgaussian, then the constants $c_2,c_3$ depend on $L$, and with probability at least $1-\exp(-c_2 \Delta m)$,
	\[
\mathcal{SW}_2( \mu_m,\mu )
	\leq c_3  \sqrt{\Delta}  \log^{3/2}\left( \frac{1}{\Delta} \right) .
\]
\end{Theorem}

\begin{Remark}
In the setting of Theorem \ref{cor:Wasserstein.small.log-concave.subgaussian}, if $\Delta = c_1d/m$ then with probability at least $1-\exp(-c^\prime_2 \sqrt{d}\log^2(\frac{m}{d}))$,
\begin{equation} \label{eq:lower-log-concave}
\mathcal{SW}_2( \mu_m,\mu)
	\leq c^{\prime}_3  \sqrt{\frac{d}{m}}  \log^2\left( \frac{m}{d} \right).
\end{equation}

We show in Section \ref{sec:lower-W2} that if $X$ is isotropic and is not degenerate, namely $\E \|X\|_2 \geq \beta \sqrt{d}$, then with probability at least $c_1(\beta)$, $\mathcal{SW}_2(\mu_m,\mu) \geq c_2(\beta) \sqrt{d/m}$. 
It is well-known that an isotropic, log-concave random vector satisfies that $\E\|X\|_2 \geq c\sqrt{d}$ for an absolute constant $c$, and thus \eqref{eq:lower-log-concave} is sharp up to the logarithmic factor.
\end{Remark}


Below we sketch the main ideas used in the proof of Theorem \ref{cor:Wasserstein.small.log-concave.subgaussian}. The complete proof can be found in Section \ref{sec:proofs.for.wasserstein}.

\subsection{Theorem \ref{cor:Wasserstein.small.log-concave.subgaussian}---highlights of the argument}
\label{sec:wasserstein.proof.log.concave.highlight}

As was noted previously,
\[
\mathcal{SW}_2(\mu_m,\mu)
=\sup_{\theta\in S^{d-1}} \left( \int_0^1 \left( F_{\mu^\theta_m}^{-1}(u) - F_{\mu^\theta}^{-1}(u) \right)^2 \,du \right)^{1/2}.
\]
Set $\delta=\kappa \Delta\log^2(\frac{e}{\Delta})$ for a well-chosen constant $\kappa\geq 1$ to be specified in what follows, let $U=[0,\delta)\cup(1-\delta,1]$, and observe that
\begin{align}
\label{eq:wasserstein.split}
\begin{split}
&\int_0^1 \left( F_{\mu^\theta_m}^{-1}(u) - F_{\mu^\theta}^{-1}(u) \right)^2 \,du \\
&\leq \int_{\delta}^{1-\delta} \left( F_{\mu^\theta_m}^{-1}(u) - F_{\mu^\theta}^{-1}(u) \right)^2 \,du
+ 2\int_U \left( F_{\mu^\theta_m}^{-1}(u) \right)^2 \,du + 2\int_U \left(  F_{\mu^\theta}^{-1}(u) \right)^2 \,du
\\
& = (1)+(2)+(3).
\end{split}
\end{align}

Intuitively, \eqref{eq:wasserstein.split} is sharp when $\Delta$ is small, as it is unrealistic to expect cancellations between  $F_{\mu^\theta_m}^{-1}(u)$ and $F_{\mu^\theta}^{-1}(u)$ when $u\in U$: that range corresponds to the ``$\delta$-outliers" of $\mu^\theta$ and the $\delta m$ extremal values of its empirical counterpart, respectively.

From here on the argument relies heavily on $\mu$'s log-concavity---and hence on the log-concavity of each $\mu^\theta$.

Consider first the effect of the outliers, i.e.,  $(2)$ and $(3)$ in Equation \eqref{eq:wasserstein.split}.
Clearly,
\begin{equation} \label{eq:log-concave-(3)}
(3) \leq c_0(\kappa) \Delta \log^4\left(\frac{1}{\Delta}\right)
\end{equation}
because marginals of a log-concave measure exhibit a sub-exponential tail-decay: setting 
\[\gamma(u)=\min\{u,1-u\} \quad\text{for }u\in[0,1],\]
it follows from Borell's Lemma (see, e.g., \cite{artstein2015asymptotic}) that for  $u \in (0,1)$,
\begin{align}
\label{eq:tail.log.concave}
\left| F_{\mu^\theta}^{-1}(u) \right|
\leq c_1 \log\left( \frac{1}{ \gamma(u)} \right).
\end{align}

Next, it is standard to show that $(2)$ is equivalent to $\frac{1}{m}\sum_{i=1}^{\delta m} (\inr{X_i,\theta}^\ast)^2$, assuming without loss of generality that $\delta m$ is an integer. Moreover, the behaviour of
\begin{align}
\label{eq:def.Hsm.intro}
H_{s,m}
=\sup_{\theta \in S^{d-1}} \max_{|I|=s} \left( \frac{1}{m}\sum_{i\in I} \inr{X_i,\theta}^2 \right)^{1/2}
=\sup_{\theta \in S^{d-1}} \left( \frac{1}{m}\sum_{i=1}^{s} (\inr{X_i,\theta}^\ast)^2 \right)^{1/2}
\end{align}
has been studied extensively over the years in rather general situations (see, e.g., \cite{ adamczak2010quantitative,mendelson2014singular,tikhomirov2018sample}).
In particular, when $X$ is isotropic and log-concave, the following estimate on \eqref{eq:def.Hsm.intro} was established in  \cite{adamczak2010quantitative} and also in \cite{talagrand2022upper}.

\begin{Theorem} \label{thm:tails.log.concave}
	There are  constants $c_1$ and $c_2$ depending only on $\kappa$  such that the following holds.
	Let  $m\geq 2d$ and set $\Delta\geq d/m$.
	Then with probability at least $1-2\exp(-c_2\sqrt{\Delta m}\log^2(\frac{1}{\Delta}))$,
	\[ H_{\delta m,m}
	\leq c_1 \sqrt\Delta \log^2\left(\frac{1}{\Delta}\right).
\]
\end{Theorem}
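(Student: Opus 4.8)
The plan is to recast $H_{\delta m,m}$ as a restricted operator norm of the empirical second-moment matrix of an isotropic log-concave ensemble and to import the corresponding estimate of Adamczak--Litvak--Pajor--Tomczak-Jaegermann \cite{adamczak2010quantitative} and of Talagrand \cite{talagrand2014upper}, and then to substitute the parametrization $\delta m=\kappa\Delta m\log^2(e/\Delta)$.

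Since $\sum_{i=1}^{s}(a_i^\ast)^2=\max_{|I|=s}\sum_{i\in I}a_i^2$ for every $a\in\R^m$,
\[
m\,H_{\delta m,m}^2
=\sup_{\theta\in S^{d-1}}\max_{|I|=\delta m}\sum_{i\in I}\inr{X_i,\theta}^2
=\max_{|I|=\delta m}\Bigl\|\sum_{i\in I}X_i\otimes X_i\Bigr\|,
\]
the norm on the right being the operator norm. So it suffices to show that for an isotropic log-concave $X$ in $\R^d$ and all $d\le s\le m$, with high probability,
\[
\max_{|I|=s}\Bigl\|\sum_{i\in I}X_i\otimes X_i\Bigr\|\le C(\kappa)\,s\,\log^2\!\Bigl(\frac{em}{s}\Bigr),
\]
which is exactly the form of the restricted-norm estimates of \cite{adamczak2010quantitative,talagrand2014upper}. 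Then one only translates logarithms: since $\Delta\le c_0$ and $s=\delta m=\kappa\Delta m\log^2(e/\Delta)$, one has $\log(em/s)\asymp\log(1/\Delta)$ up to a factor depending only on $\kappa$, so $C(\kappa)\,s\log^2(em/s)\le C'(\kappa)\,\Delta m\log^4(1/\Delta)$, that is, $H_{\delta m,m}\le c_1(\kappa)\sqrt{\Delta}\log^2(1/\Delta)$; the same substitution turns the confidence level supplied by \cite{adamczak2010quantitative,talagrand2014upper} into $1-2\exp(-c_2\sqrt{\Delta m}\log^2(1/\Delta))$, $c_2=c_2(\kappa)$.

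For completeness, the skeleton of the restricted-norm estimate is the familiar bounded/large split. One fixes a threshold $T=c_T\log(em/s)$ and, for each $\theta\in S^{d-1}$, writes $\inr{X_i,\theta}^2=\inr{X_i,\theta}^2\IND_{\{|\inr{X_i,\theta}|\le T\}}+\inr{X_i,\theta}^2\IND_{\{|\inr{X_i,\theta}|>T\}}$. The truncated part needs no randomness: $\max_{|I|=s}\sum_{i\in I}\inr{X_i,\theta}^2\IND_{\{|\inr{X_i,\theta}|\le T\}}\le sT^2\le C\,s\log^2(em/s)$. For the large part $\sup_\theta\sum_{i=1}^m\inr{X_i,\theta}^2\IND_{\{|\inr{X_i,\theta}|>T\}}$, the intuition is that by Borell's lemma the number of coordinates of $(\inr{X_i,\theta})_i$ exceeding a level $u$ is typically of order $m e^{-cu}$, so decomposing the excess over dyadic levels $u=2^jT$ yields a geometric-type series dominated by its first term, of order $m e^{-cT}T^2\asymp s\log^2(em/s)$ for the chosen $T$; the extreme dyadic scales, where $2^jT\ge c(\sqrt d+\log m)$, are empty, because---by the Paouris deviation inequality---with high probability no $\|X_i\|_2$ is that large. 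Turning this heuristic into a bound \emph{uniform over} $S^{d-1}$ is the real content of \cite{adamczak2010quantitative,talagrand2014upper}.

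The step I expect to be the main obstacle is precisely this uniformity: one cannot afford to pay for the supremum over $S^{d-1}$. A naive $\eps$-net has cardinality $e^{\Theta(d)}$, and a union bound over it would---already when $\Delta=d/m$, so that the target confidence is $\exp(-c\sqrt d\log^2(m/d))$---both overwhelm the confidence and inflate the bound by spurious logarithmic factors; likewise a scale-by-scale $\VC$-type control of the level sets $\{x:|\inr{x,\theta}|>u\}$ would introduce fluctuation terms that, once weighted by $(2^jT)^2$ and summed, exceed the target in parts of the range of $(d,m,\Delta)$. This is exactly why the generic-chaining (majorizing-measure) machinery of \cite{talagrand2014upper} and the related techniques of \cite{adamczak2010quantitative} are required; in our setting that machinery is invoked essentially as a black box, and the only remaining task is the logarithmic bookkeeping that passes from the $s\log^2(em/s)$ form to the $\sqrt{\Delta}\log^2(1/\Delta)$ form and from the corresponding confidence to $1-\exp(-c_2\sqrt{\Delta m}\log^2(1/\Delta))$.
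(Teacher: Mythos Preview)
Your proposal is correct and follows essentially the same route as the paper: both invoke the restricted-norm estimate for isotropic log-concave ensembles from \cite{adamczak2010quantitative,talagrand2014upper} as a black box and then substitute $s=\delta m=\kappa\Delta m\log^2(e/\Delta)$ to convert $s\log^2(em/s)$ and the confidence $\exp(-c\sqrt{s}\log(em/s))$ into the stated $\Delta$-form. The only cosmetic difference is that the paper makes the two-step structure explicit---it first records the $\psi_1$ bound $H_{s,m}\le c(\max_i\|X_i\|_2/\sqrt m+\sqrt{s/m}\log(em/s))$ (its Example~\ref{ex:S.logconcave}) and then invokes Paouris's theorem separately to control $\max_i\|X_i\|_2$, whereas you absorb the Paouris step into the black box and only mention it in your sketch; your statement that the cited references give directly $\max_{|I|=s}\|\sum_{i\in I}X_i\otimes X_i\|\le C\,s\log^2(em/s)$ is therefore slightly imprecise, but the combined content is the same.
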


While the estimates on $(2)$ and $(3)$ are either well-understood or trivial, the key ingredient in the proof of Theorem \ref{cor:Wasserstein.small.log-concave.subgaussian} is the estimate on $(1)$: \emph{cancellations} that occur in the interval $[0,1] \backslash U$.
Those cancellations are due to a significant generalization of the Dvoretzky-Kiefer-Wolfowitz (DKW) inequality that is \emph{scale sensitive} and \emph{holds uniformly in $\theta$}. More accurately, we show that
with high probability,  uniformly in $\theta\in S^{d-1}$ and for any $t\in \R$ satisfying that $F_{\mu^\theta}(t)\in [\Delta,1-\Delta]$,
\begin{align}
\label{eq:distribution.empirica.true}
\left|F_{\mu^\theta_m}(t)-F_{\mu^\theta}(t)\right|
\leq \sqrt{\Delta  \gamma( F_{\mu^\theta}(t)) }\cdot \log\left(\frac{e}{\gamma(F_{\mu^\theta}(t) )} \right);
\end{align}
in particular,  for $u\in[\delta,1-\delta]$,
\begin{align} \label{eq:quantial.empirica.true}
  F_{\mu^\theta_m}^{-1}(u)
 \in \left[ F_{\mu^\theta}^{-1}\left( u-2\sqrt{\Delta \gamma(u)} \log\left(\tfrac{e}{\gamma(u)}\right) \right)
, F_{\mu^\theta}^{-1}\left( u+2\sqrt{\Delta \gamma(u)} \log\left(\tfrac{e}{\gamma(u)}\right)\right) \right].
\end{align}
The proofs of \eqref{eq:distribution.empirica.true} and \eqref{eq:quantial.empirica.true} can be found in Section \ref{subsection:ratio}.

Once \eqref{eq:quantial.empirica.true}  is established, one may invoke the fact that a log-concave measure with variance 1 satisfies Cheeger's isoperimetric inequality with an absolute constant \cite{bobkov1996extremal,kannan1995isoperimetric}: 
there is an absolute constant $h>0$ such that for any $u \in (0,1)$ and any $\theta \in S^{d-1}$,
\begin{align}
\label{eq:cheeger}
\frac{d}{du} F_{\mu^\theta}^{-1}(u)
\leq \frac{1}{h  \gamma(u) };
\end{align}
the formulation used here can be found in \cite{bobkov1999isoperimetric}.

By a first order Taylor expansion and in the high probability event on which \eqref{eq:quantial.empirica.true} holds,
\begin{align*}
&\int_{\delta}^{1-\delta} \left( F_{\mu^\theta_m}^{-1}(u) - F_{\mu^\theta}^{-1}(u) \right)^2  \,du \\
&\leq c_2 \int_{\delta}^{1-\delta} \left( \frac{  \sqrt{\Delta \gamma(u) } \log(\frac{e}{\gamma(u)})  }{ h  \gamma(u) } \right)^2 \,du
\sim  \Delta  \log^3\left(\frac{1}{\Delta}\right).
\end{align*}
Combining all these estimates it follows that
\[
\mathcal{SW}_2(\mu_m,\mu)
\leq C \sqrt \Delta \log^{2}\left(\frac{1}{\Delta}\right),
\]
as claimed.
\qed

\vspace{0.5em}
Let us stress that this argument relies on log-concavity in a crucial way and that the answer in the general case has to follow a completely different path.
Most notably, in the log-concave case $F_{\mu^\theta}^{-1}$ is differentiable with a well-behaved derivative, but for a general measure $F_{\mu^\theta}^{-1}$ need not even be continuous.
In particular, the pointwise control on $|F_{\mu^\theta_m}^{-1} - F_{\mu^\theta}^{-1}|$ one may use in the  log-concave case is simply false when it comes to an arbitrary measure. Cancellations in the integral on $[\delta,1-\delta]$ happen to be of a ``global" nature, leading to a weaker bound, that nevertheless is optimal in the context of Theorem \ref{thm:Wasserstein.small}.

\section{Theorem \ref{thm:Wasserstein.small} and Theorem \ref{cor:Wasserstein.small.log-concave.subgaussian} --- Proofs}
\label{sec:proofs.for.wasserstein}

We start with a word about \emph{notation}.
$\|\cdot\|_2$ is the Euclidean norm and  $\inr{\cdot,\cdot}$  is the standard inner product---though in what follows we do not specify the (finite) dimension of the underlying space.
Throughout, $c,c_0,c_1,C,C_0,C_1,\dots$ are absolute constants whose values may change from line to line.
If a constant $c$ depends on a parameter $a$, we write $c=c(a)$, and if $cA \leq B \leq CA$ for absolute constants $c$ and $C$, that is denoted by $A \sim B$.

\subsection{Preliminary estimates} 
\label{sec:preliminary}

In what follows, $X$ is a centred, isotropic random vector in $\R^d$ that satisfies $L_q-L_2$ \emph{norm equivalence} with constant $L$ for some $q\geq 4$. Hence,
\begin{align} \label{eq:def.norm.equiv}
 \|\inr{X,\theta}\|_{L_q} \leq L \|\inr{X,\theta}\|_{L_2} =L \quad\text{ for every } \theta\in S^{d-1}.
\end{align}
Let $\mu$ be the probability measure endowed by $X$, and for $\theta \in S^{d-1}$, $\mu^\theta$ is the marginal endowed by $\inr{X,\theta}$. 
Thus, for $t>0$
\[ \mu^\theta((-t,t)^c)
=\PP(|\inr{X,\theta}|\geq t)
\leq \frac{ \E|\inr{X,\theta}|^q}{t^q}
\leq \frac{L^q}{t^q}
\]
and it follows that for every $u\in(0,1)$
\begin{equation} \label{eq:Wasserstein.tail.decay.norm.equivalence}
\left| F_{\mu^\theta}^{-1}(u) \right| \leq \frac{L}{ \gamma(u)^{1/q} }.
\end{equation}
where we recall that $\gamma(u)=\min\{u,1-u\}$.

Let us turn to several simple observations that play an instrumental role in the proof of Theorem \ref{thm:Wasserstein.small}. 
We begin with the useful characterization of the Wasserstein distance between measures on the real line mentioned previously.

\begin{Lemma} \label{lem:Wasserstein.via.inverse.functions}
	For $\nu,\tau\in\mathcal{P}_2(\R)$,
	\begin{equation}
	\label{eq:wasserstein.inverse.function}
	\mathcal{W}_2(\nu,\tau)
	=  \left( \int_0^1 \left( F_\nu^{-1}(u) - F_\tau^{-1}(u) \right)^2\, du\right)^{1/2}.
	\end{equation}
\end{Lemma}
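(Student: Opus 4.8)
The plan is to establish the two inequalities separately: $\mathcal{W}_2(\nu,\tau)\le\big(\int_0^1(F_\nu^{-1}(u)-F_\tau^{-1}(u))^2\,du\big)^{1/2}$ by exhibiting an explicit coupling, and the reverse inequality by a correlation (Fréchet--Hoeffding) argument applied after expanding the quadratic cost. For the upper bound, recall the defining property of the right-continuous generalized inverse: for $u\in(0,1)$ and $t\in\R$ one has $F_\nu^{-1}(u)\le t\iff u\le F_\nu(t)$. Hence if $U$ is uniformly distributed on $[0,1]$ then $\PP(F_\nu^{-1}(U)\le t)=\PP(U\le F_\nu(t))=F_\nu(t)$, so $F_\nu^{-1}(U)$ has law $\nu$, and likewise $F_\tau^{-1}(U)$ has law $\tau$. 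Thus $(F_\nu^{-1}(U),F_\tau^{-1}(U))$ is a coupling of $\nu$ and $\tau$, and plugging it into the definition of $\mathcal{W}_2$ gives $\mathcal{W}_2(\nu,\tau)^2\le\int_0^1(F_\nu^{-1}(u)-F_\tau^{-1}(u))^2\,du$. (Replacing $F^{-1}$ by its left-continuous version alters it on at most a countable set, changing neither the law nor the integral.)

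For the lower bound, fix an arbitrary coupling, realized by random variables $A\sim\nu$ and $B\sim\tau$ on a common probability space; since $\nu,\tau\in\mathcal{P}_2(\R)$ both have finite second moments, we may assume $\E[(A-B)^2]<\infty$. Expanding, $\E[(A-B)^2]=\E[A^2]+\E[B^2]-2\,\E[AB]$, and because the marginals are fixed, $\E[A^2]=\int_0^1F_\nu^{-1}(u)^2\,du$ and $\E[B^2]=\int_0^1F_\tau^{-1}(u)^2\,du$, the same values attained by the quantile coupling of the previous paragraph. It therefore suffices to show that the quantile coupling maximizes the correlation, i.e. $\E[AB]\le\int_0^1F_\nu^{-1}(u)F_\tau^{-1}(u)\,du=\E[F_\nu^{-1}(U)F_\tau^{-1}(U)]$. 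This follows from Hoeffding's covariance identity $\mathrm{Cov}(A,B)=\int_\R\int_\R\big(\PP(A\le s,B\le t)-F_\nu(s)F_\tau(t)\big)\,ds\,dt$ (valid under the second-moment assumption) together with the Fréchet--Hoeffding bound $\PP(A\le s,B\le t)\le\min(F_\nu(s),F_\tau(t))$, whose right-hand side is realized pointwise by the comonotone pair $(F_\nu^{-1}(U),F_\tau^{-1}(U))$. Integrating, $\mathrm{Cov}(A,B)\le\mathrm{Cov}(F_\nu^{-1}(U),F_\tau^{-1}(U))$, and since $\E A,\E B$ are determined by the marginals this yields the desired bound on $\E[AB]$. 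Taking the infimum over couplings gives $\mathcal{W}_2(\nu,\tau)^2\ge\int_0^1(F_\nu^{-1}(u)-F_\tau^{-1}(u))^2\,du$, completing the proof.

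The only nontrivial part is the lower bound, and the point to handle with care is making the correlation comparison rigorous: one must check that the double integral in Hoeffding's identity converges absolutely, which reduces via Fubini to integrability of $s\mapsto F_\nu(s)(1-F_\nu(s))$ and the analogous function for $\tau$ — a consequence of the finite second moments — and that $\min(F_\nu(s),F_\tau(t))$ is genuinely attained by the quantile coupling. A route that avoids Hoeffding's identity altogether is to invoke the cyclical-monotonicity characterization of optimal plans (any $\mathcal{W}_2$-optimal coupling on $\R$ is concentrated on a monotone set, hence on the graph of the quantile map) or Kantorovich duality; but the covariance-identity argument is the most elementary and self-contained. In any case the identity is classical; see \cite{ruschendorf1985wasserstein}.
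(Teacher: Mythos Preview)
Your proof is correct. The upper bound via the quantile coupling $(F_\nu^{-1}(U),F_\tau^{-1}(U))$ is exactly the argument the paper sketches. For the lower bound the paper does not give a proof at all---it simply refers to \cite{ruschendorf1985wasserstein}---whereas you supply a complete self-contained argument via Hoeffding's covariance identity and the Fr\'echet--Hoeffding upper bound. That route is standard and correct, and your remark about absolute convergence of the double integral (reducing to integrability of $s\mapsto F_\nu(s)(1-F_\nu(s))$, guaranteed by the second-moment assumption) addresses the only genuine technical point. The alternative you mention, via cyclical monotonicity of optimal plans on $\R$, is closer in spirit to the optimal-transport literature and arguably more conceptual, but your covariance argument has the advantage of being entirely elementary.
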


The proof of Lemma \ref{lem:Wasserstein.via.inverse.functions} can be found, for example, in \cite[Theorem 2]{ruschendorf1985wasserstein}. 
Let us sketch the simple argument for the upper estimate. 
If $|\cdot|$ is the uniform probability measure on $(0,1)$ and
	\[
\Pi(A)=| \{ u\in (0,1) : (F_\nu^{-1}(u), F_\tau^{-1}(u))\in A\}|,
\]
then $\Pi$ is  a coupling between $\nu$ and $\tau$. 
The $\mathcal{W}_2$ distance is the infimum over all such couplings and therefore,
	\[ \mathcal{W}_2^2(\nu,\tau)
	\leq \int_{\R\times\R} ( x-y )^2\, \Pi(dx,dy)
	= \int_0^1 \left( F_\nu^{-1}(u)- F_\tau^{-1}(u) \right)^2\, du .   \eqno\qed \]

The next observation is equally straightforward: the (trivial) connection between  $\mathcal{SW}_2$ and $\rho_{d,m}=\sup_{\theta\in S^{d-1}} | \|\Gamma\theta\|_2^2-1|$.

\begin{Lemma} \label{lem:Wasserstein.implies.Bai.Yin}
	If $\mathcal{SW}_2\left( \mu_m, \mu\right) \leq 1$ then $\rho_{d,m}\leq 3 \mathcal{SW}_2\left( \mu_m, \mu \right)$.
\end{Lemma}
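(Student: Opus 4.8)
The plan is to express both quantities $\frac1m\sum_{i=1}^m\inr{X_i,\theta}^2$ and $1$ as squared $L_2(0,1)$-norms of quantile functions, and then reduce the claim to the elementary inequality $|a^2-b^2|\le|a-b|^2+2b|a-b|$ together with the triangle inequality in $L_2(0,1)$.

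Concretely, I would first record that for any $\nu\in\mathcal{P}_2(\R)$ one has $\int_\R t^2\,\nu(dt)=\int_0^1 (F_\nu^{-1}(u))^2\,du$, since $F_\nu^{-1}$ pushes the uniform measure on $(0,1)$ forward to $\nu$. Applying this to $\nu=\mu_m^\theta=\frac1m\sum_{i=1}^m\delta_{\inr{X_i,\theta}}$ gives $\frac1m\sum_{i=1}^m\inr{X_i,\theta}^2=\int_0^1 (F_{\mu_m^\theta}^{-1}(u))^2\,du=\|F_{\mu_m^\theta}^{-1}\|_{L_2(0,1)}^2$, and applying it to $\nu=\mu^\theta$ together with isotropy of $X$ gives $1=\|\inr{X,\theta}\|_{L_2}^2=\|F_{\mu^\theta}^{-1}\|_{L_2(0,1)}^2$. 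Writing $a=\|F_{\mu_m^\theta}^{-1}\|_{L_2(0,1)}$ and $b=\|F_{\mu^\theta}^{-1}\|_{L_2(0,1)}=1$, the quantity controlled by $\rho_{d,m}$ is exactly $|a^2-b^2|$ for the worst $\theta$.

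Next I would bound $|a-b|$: by the reverse triangle inequality in $L_2(0,1)$,
\[
|a-b|\le\bigl\|F_{\mu_m^\theta}^{-1}-F_{\mu^\theta}^{-1}\bigr\|_{L_2(0,1)}=\mathcal{W}_2(\mu_m^\theta,\mu^\theta)\le\mathcal{SW}_2(\mu_m,\mu),
\]
using Lemma \ref{lem:Wasserstein.via.inverse.functions} and Definition \ref{def:msW}. Set $w=\mathcal{SW}_2(\mu_m,\mu)$. Then $|a^2-b^2|=|a-b|\,(a+b)\le|a-b|\bigl(|a-b|+2b\bigr)=|a-b|^2+2|a-b|\le w^2+2w$ since $b=1$. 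Under the hypothesis $w\le1$ we have $w^2\le w$, hence $|a^2-b^2|\le 3w$; taking the supremum over $\theta\in S^{d-1}$ yields $\rho_{d,m}\le 3\,\mathcal{SW}_2(\mu_m,\mu)$.

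There is essentially no obstacle here — the only mild point to be careful about is the identity $\int t^2\,d\nu=\int_0^1(F_\nu^{-1})^2\,du$ (valid for all $\nu\in\mathcal{P}_2(\R)$, in particular for the discrete $\mu_m^\theta$, with or without atoms and ties among the $\inr{X_i,\theta}$), and the use of isotropy to get $\|F_{\mu^\theta}^{-1}\|_{L_2}=1$. Everything else is the triangle inequality and the factorization $a^2-b^2=(a-b)(a+b)$.

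\begin{proof}
For any $\nu\in\mathcal{P}_2(\R)$ the quantile function $F_\nu^{-1}$ pushes the uniform probability measure on $(0,1)$ forward to $\nu$, so $\int_\R t^2\,\nu(dt)=\int_0^1\bigl(F_\nu^{-1}(u)\bigr)^2\,du$. Fix $\theta\in S^{d-1}$ and apply this to $\nu=\mu_m^\theta=\frac1m\sum_{i=1}^m\delta_{\inr{X_i,\theta}}$ and to $\nu=\mu^\theta$; using that $X$ is isotropic,
\[
\frac1m\sum_{i=1}^m\inr{X_i,\theta}^2=\bigl\|F_{\mu_m^\theta}^{-1}\bigr\|_{L_2(0,1)}^2,
\qquad
1=\bigl\|\inr{X,\theta}\bigr\|_{L_2}^2=\bigl\|F_{\mu^\theta}^{-1}\bigr\|_{L_2(0,1)}^2.
\]
Put $a=\|F_{\mu_m^\theta}^{-1}\|_{L_2(0,1)}$ and $b=\|F_{\mu^\theta}^{-1}\|_{L_2(0,1)}=1$, and write $w=\mathcal{SW}_2(\mu_m,\mu)$. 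By the triangle inequality in $L_2(0,1)$ and Lemma \ref{lem:Wasserstein.via.inverse.functions},
\[
|a-b|\le\bigl\|F_{\mu_m^\theta}^{-1}-F_{\mu^\theta}^{-1}\bigr\|_{L_2(0,1)}=\mathcal{W}_2(\mu_m^\theta,\mu^\theta)\le w.
\]
Hence, since $b=1$,
\[
\left|\frac1m\sum_{i=1}^m\inr{X_i,\theta}^2-1\right|=|a^2-b^2|=|a-b|(a+b)\le|a-b|\bigl(|a-b|+2b\bigr)\le w^2+2w.
\]
If $w\le1$ then $w^2\le w$, so the left-hand side is at most $3w$. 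Taking the supremum over $\theta\in S^{d-1}$ gives $\rho_{d,m}\le 3\,\mathcal{SW}_2(\mu_m,\mu)$.
\end{proof}
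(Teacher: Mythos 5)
Your proof is correct and takes essentially the same approach as the paper's: both write the two second moments as $L_2(0,1)$-norms of quantile functions, factor the difference of squares, bound the small factor by $\mathcal{SW}_2$ via Lemma \ref{lem:Wasserstein.via.inverse.functions}, and use the hypothesis $\mathcal{SW}_2\le 1$ to bound the large factor by $3$. The paper integrates first and applies Cauchy--Schwarz on $\int(f-g)(f+g)$, whereas you factor the scalar $|a^2-b^2|=|a-b|(a+b)$ and use the reverse triangle inequality; these are cosmetic variants of the same argument.
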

\begin{proof}
	By the isotropicity of $X$, for $\theta \in S^{d-1}$ it is evident that
\begin{align*}
	\|\Gamma\theta\|_2^2-1
	&=\int_0^1  F^{-1}_{\mu^\theta_m}(u)^2\,du - \int_0^1  F^{-1}_{\mu^\theta}(u)^2\,du \\
	&= \int_0^1 \left( F^{-1}_{\mu^\theta_m}(u) -  F^{-1}_{\mu^\theta}(u) \right) \left( F^{-1}_{\mu^\theta_m}(u) +  F^{-1}_{\mu^\theta}(u) \right) \,du.
	\end{align*}
	The claim follows from the Cauchy-Schwartz inequality, \eqref{eq:wasserstein.inverse.function}, the triangle inequality, and the isotropicity of $X$.
\end{proof}

The final two observations focus on  connections between the Wasserstein distance and monotone ordering. 
Let $a^\sharp$ be the monotone non-decreasing rearrangement of a vector $a\in\R^m$, and for $\theta\in S^{d-1}$ and $1\leq i\leq m$, set $\lambda^\theta_i=m\int_{(i-1)/m}^{i/m} F_{\mu^\theta}^{-1}(u) \, du$.

\begin{Lemma} \label{lem:coordinate.distribution.concentration}
We have that
	\[
\sup_{\theta\in S^{d-1}} \left( \frac{1}{m}\sum_{i=1}^m \left( \inr{X_i,\theta}^\sharp - \lambda^\theta_i \right)^2  \right)^{1/2}
	\leq \mathcal{SW}_2(\mu_m,\mu).
\]
\end{Lemma}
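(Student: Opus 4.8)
The plan is to reduce everything to a pointwise-in-$\theta$ estimate and exploit the closed form of $\mathcal{W}_2$ on the line. Fix $\theta\in S^{d-1}$. The marginal $\mu_m^\theta$ is $\frac1m\sum_{i=1}^m\delta_{\inr{X_i,\theta}}$, so its right-continuous quantile function $F_{\mu_m^\theta}^{-1}$ is constant on each interval $\bigl(\tfrac{i-1}{m},\tfrac{i}{m}\bigr]$, where it equals the $i$-th order statistic, namely $\inr{X_i,\theta}^\sharp$. Since $X$ is isotropic it is square-integrable, hence $\mu^\theta,\mu_m^\theta\in\mathcal{P}_2(\R)$ and Lemma \ref{lem:Wasserstein.via.inverse.functions} applies, giving
\[
\mathcal{W}_2(\mu_m^\theta,\mu^\theta)^2
= \int_0^1 \bigl(F_{\mu_m^\theta}^{-1}(u) - F_{\mu^\theta}^{-1}(u)\bigr)^2\,du
= \sum_{i=1}^m \int_{(i-1)/m}^{i/m} \bigl(\inr{X_i,\theta}^\sharp - F_{\mu^\theta}^{-1}(u)\bigr)^2\,du .
\]

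The second step is the observation that $\lambda^\theta_i = m\int_{(i-1)/m}^{i/m} F_{\mu^\theta}^{-1}(u)\,du$ is precisely the average of $F_{\mu^\theta}^{-1}$ over $\bigl(\tfrac{i-1}{m},\tfrac{i}{m}\bigr]$, so that for every constant $a$ the elementary orthogonal decomposition
\[
\int_{(i-1)/m}^{i/m}\bigl(a - F_{\mu^\theta}^{-1}(u)\bigr)^2\,du
= \frac1m\,(a - \lambda^\theta_i)^2 + \int_{(i-1)/m}^{i/m}\bigl(\lambda^\theta_i - F_{\mu^\theta}^{-1}(u)\bigr)^2\,du
\]
holds. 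Taking $a=\inr{X_i,\theta}^\sharp$ and discarding the nonnegative residual term yields $\int_{(i-1)/m}^{i/m}(\inr{X_i,\theta}^\sharp - F_{\mu^\theta}^{-1}(u))^2\,du \ge \frac1m(\inr{X_i,\theta}^\sharp - \lambda^\theta_i)^2$. Summing over $i$ gives $\frac1m\sum_{i=1}^m(\inr{X_i,\theta}^\sharp - \lambda^\theta_i)^2 \le \mathcal{W}_2(\mu_m^\theta,\mu^\theta)^2$; taking square roots and then the supremum over $\theta\in S^{d-1}$, and recalling $\sup_\theta\mathcal{W}_2(\mu_m^\theta,\mu^\theta)=\mathcal{SW}_2(\mu_m,\mu)$, finishes the proof.

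There is no substantive obstacle: the lemma is, at heart, the statement that replacing a function by its cell-average only decreases $L_2$ error, applied coordinate by coordinate to the quantile function. The only points needing a modicum of care are the identification of $F_{\mu_m^\theta}^{-1}$ with the order statistics of $(\inr{X_i,\theta})_{i=1}^m$ — which is what discretizes the integral defining $\mathcal{W}_2(\mu_m^\theta,\mu^\theta)$ — and the verification that the measures lie in $\mathcal{P}_2(\R)$ so that Lemma \ref{lem:Wasserstein.via.inverse.functions} is available. The same computation, combined with \eqref{eq:co-monotone}, is exactly what justifies \eqref{eq:coordinate.distribution}.
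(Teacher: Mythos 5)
Your proof is correct and follows essentially the same route as the paper: identify $F_{\mu_m^\theta}^{-1}$ with the order statistics on each interval $\bigl(\tfrac{i-1}{m},\tfrac{i}{m}\bigr]$, invoke Lemma \ref{lem:Wasserstein.via.inverse.functions}, and bound $\tfrac1m(\inr{X_i,\theta}^\sharp-\lambda^\theta_i)^2$ by the corresponding cell integral. The paper cites Jensen's inequality where you spell out the equivalent orthogonal (bias--variance) decomposition; these are the same estimate.
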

\begin{proof}
	Note that for every $1\leq i\leq m$ and $ u\in ( \frac{i-1}{m},  \frac{i}{m}]$, $F_{\mu^\theta_m}^{-1}(u)=\inr{X_i,\theta}^\sharp$. Thus,  by Jensen's inequality and invoking Lemma \ref{lem:Wasserstein.via.inverse.functions},
	\begin{align*}
	\frac{1}{m}\sum_{i=1}^m \left( \inr{X_i,\theta}^\sharp - \lambda^\theta_i \right)^2
	&\leq \sum_{i=1}^m \int_{(i-1)/m}^{i/m} \left(  F_{\mu^\theta_m}^{-1}(u) - F_{\mu^\theta}^{-1}(u)\right)^2 \, du
	=\mathcal{W}_2^2\left(\mu^\theta_m,\mu^\theta\right). \qedhere
	\end{align*}
\end{proof}

From here on, $\kappa\geq 1$ is a (large) absolute constant that will be specified in what follows: assume that  $\Delta\leq (10 \kappa)^{-2}$, set
\begin{align*}
\delta&=\kappa\Delta \log^2\left(\frac{e}{\Delta}\right) \text{ and}\\
U&=[0,\delta)\cup(1-\delta,1] ;
\end{align*}
and observe that $\delta\leq\frac{1}{4}$.

Invoking \eqref{eq:wasserstein.inverse.function} and the triangle inequality,
\begin{align*}
\begin{split}
\mathcal{W}_2 \left( \mu^\theta_m,\mu^\theta \right)
	&\leq \left( \int_{\delta}^{1-\delta} \left( F_{\mu^\theta_m}^{-1}(u)- F_{\mu^\theta}^{-1}(u) \right)^2\,du \right)^{1/2} + \left(  \int_U F_{\mu^\theta_m}^{-1}(u)^2\,du \right)^{1/2}\\
	&\qquad   + \left(  \int_U F_{\mu^\theta}^{-1}(u)^2\,du \right)^{1/2} 
	=(1) + (2) + (3).
\end{split}
\end{align*}

The  term $(3)$  can be bounded trivially using the tail-estimate \eqref{eq:Wasserstein.tail.decay.norm.equivalence}.
In contrast, estimating $(2)$ is  more subtle, and we defer that to Section \ref{sec:sample.tails}.
The key estimate is on $(1)$.
It is based on our multi-dimensional,  scale sensitive generalization of the Dvoretzky-Kiefer-Wolfowitz inequality:
we show in Section \ref{subsection:ratio} that on a high probability event,
\[
F_{\mu^\theta_m}^{-1}(u)- F_{\mu^\theta}^{-1}(u)
\]
can be bounded (pointwise) using the (deterministic) difference
\begin{align}
\label{eq:det.diff}
F_{\mu^\theta}^{-1}(\psi_-(u))-F_{\mu^\theta}^{-1}(\psi_+(u)),
\end{align}
where $\psi_\pm$ are suitable perturbations of the identity.
In particular, the heart of the matter is to  control the  ``stability'' of $F^{-1}_{\mu^\theta}$---showing that \eqref{eq:det.diff} has a small $L_2$ norm.
It is important to note that this small $L_2$ norm \emph{does not} follow from an $L_\infty$ estimate on \eqref{eq:det.diff}; the latter is true only under strong regularity assumptions on $\mu$.

\subsection{A generalization of the DKW inequality for linear functions}
\label{subsection:ratio}
A key component in the proof of Theorem \ref{thm:Wasserstein.small} is the following uniform estimate on the deviation between the empirical and the true distribution functions for each marginal $\mu^\theta$.
Let
\[
\mathcal{U}
=\left\{ \{ \inr{\theta,\cdot}  \in I\}  : \theta \in \R^d, \ I \subset \R \ \text{is a generalized interval} \right\},
\]
where by ``generalized interval" we mean an open/closed, half-open/closed interval in $\mathbb{R}$, including rays.

\begin{Theorem}
\label{thm:ratio}
	There are absolute constants $c_0,c_1$ such that the following holds.
	Let  $\Delta\leq 1$ and $m\geq c_0 \frac{d}{\Delta}$.
	Then, with probability at least $1-\exp(-c_1 \Delta m)$, for every $A\in\mathcal{U}$ satisfying $\mu(A) \geq \Delta$, we have that
	\[
	\left| \mu_m (A) - \mu(A)   \right|
	\leq \sqrt{\Delta \mu(A) }  \log\left( \frac{e}{ \mu(A)} \right).
	\]
\end{Theorem}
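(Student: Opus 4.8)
The plan is to prove a scale-sensitive, VC-type concentration bound for the class $\mathcal{U}$ of indicator functions of slabs $\{\langle\theta,\cdot\rangle\in I\}$. The key point is that $\mathcal{U}$ is a VC class of dimension $O(d)$: each set $\{\langle\theta,\cdot\rangle\in I\}$ is an intersection of two affine halfspaces (or one, for a ray), and the VC dimension of halfspaces in $\R^d$ is $d+1$, so $\VC(\mathcal{U}) \le C d$ by the standard bound on VC dimension of Boolean combinations. I would set $\phi(A) = \sqrt{\Delta\mu(A)}\log(e/\mu(A))$ and observe that the claim is a \emph{relative} deviation inequality: we want $|\mu_m(A)-\mu(A)|$ controlled not by a uniform $\sqrt{d/m}$ but by something that shrinks like $\sqrt{\mu(A)}$ (up to a log) when $\mu(A)$ is small, down to the scale $\mu(A)\asymp\Delta$.

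The main tool I would invoke is a relative/ratio-type deviation inequality for VC classes — the Vapnik–Chervonenkis relative deviation bound (or its sharpened forms à la Anthony–Shawe-Taylor, Boucheron–Bousquet–Lugosi), which states that with probability at least $1-\eta$, uniformly over a VC class of dimension $\mathcal V$,
\[
|\mu_m(A)-\mu(A)| \le C\left( \sqrt{\mu(A)\,\frac{\mathcal V\log(1/\delta_0) + \log(1/\eta)}{m}} + \frac{\mathcal V\log(1/\delta_0)+\log(1/\eta)}{m}\right),
\]
or a chaining/bracketing refinement that replaces the crude union-bound log by an integrated entropy term, which is what produces the $\log(e/\mu(A))$ rather than a fixed $\log m$. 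The route I would actually follow: peel the range of scales $\mu(A)\in[\Delta,1]$ into dyadic bands $[2^{j}\Delta, 2^{j+1}\Delta]$, apply a one-sided Bernstein/Bennett bound for a single set together with the VC shattering bound $\Pi_{\mathcal U}(2m)\le (2m)^{Cd}$ (Sauer–Shelah) to union-bound over an appropriate $\delta$-net of $\mathcal U$ at each scale, and then sum the failure probabilities. At scale $s=\mu(A)\asymp 2^j\Delta$, the variance is $\asymp s$, so Bernstein gives deviation $\asymp \sqrt{s\cdot t/m} + t/m$ at confidence $e^{-t}$; choosing $t\asymp d\log m + \Delta m$ (enough to kill the $(2m)^{Cd}$ factor and to leave $\exp(-c_1\Delta m)$) and using $m\ge c_0 d/\Delta$ so that $d\log m \lesssim \Delta m \log(1/\Delta) \lesssim \Delta m \log^2(\cdot)$, the $\sqrt{s t/m}$ term becomes $\lesssim \sqrt{s\Delta}\cdot\log(e/s)^{?}$ — one has to be careful that the logarithmic factor comes out as $\log(e/\mu(A))$ and not $\log(em/d)$; this is exactly where the choice $m \gtrsim d/\Delta$ is used, since then $d\log(m) \lesssim \Delta m \log(1/\Delta)$ and the per-scale budget $t_j \asymp \Delta m \log(e/(2^j\Delta))^2$ can absorb both the entropy and the tail, after summing a geometric series in $j$.

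Concretely, the steps in order: (i) bound $\VC(\mathcal U)\le Cd$ via Sauer–Shelah applied to Boolean combinations of halfspaces; (ii) fix a scale $s\in\{2^j\Delta\}$, and for the (finite, by Sauer–Shelah on a $2m$-point sample, hence via a symmetrization argument) effective family of sets with $\mu(A)\le 2s$ apply the relative Bernstein inequality $\PROB(|\mu_m(A)-\mu(A)|\ge \eps)\le 2\exp(-cm\eps^2/(s+\eps))$; (iii) union bound, taking the confidence parameter at scale $j$ to be $t_j = c(d\log m + \Delta m \log^2(e/s))$ so that the total failure probability is $\sum_j (2m)^{Cd} e^{-t_j} \le \exp(-c_1\Delta m)$; (iv) solve for $\eps_j$: $\eps_j \asymp \sqrt{s t_j/m} \asymp \sqrt{s\Delta}\log(e/s) + \sqrt{sd\log m/m}$, and check that the second term is also $\lesssim \sqrt{s\Delta}\log(e/s)$ because $d\log m/m \lesssim \Delta\log^2(e/\Delta)$ when $m\ge c_0 d/\Delta$, and $\log(e/s)\ge \log(e/\Delta)^{1/2}$-type comparisons close the gap (possibly at the cost of adjusting $\kappa$/absolute constants); (v) conclude on the intersection of the dyadic-scale events, which has probability $\ge 1-\exp(-c_1\Delta m)$, that $|\mu_m(A)-\mu(A)|\le \sqrt{\Delta\mu(A)}\log(e/\mu(A))$ for every $A\in\mathcal U$ with $\mu(A)\ge\Delta$.

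I expect the main obstacle to be the bookkeeping that produces precisely the factor $\log(e/\mu(A))$ — a \emph{local} logarithm in the scale — rather than a global $\log m$ or $\log(m/d)$. The naive VC union bound contributes $d\log m$, which is too big by a $\log m$ factor relative to what we want; the resolution is that the $m\ge c_0 d/\Delta$ hypothesis converts $d\log m$ into something of order $\Delta m$ times a log, and a careful dyadic peeling lets the $j$-th band pay only $\Delta m \log^2(e/(2^j\Delta))$ in the exponent, whose square root, divided appropriately, yields exactly the $\sqrt{\Delta\mu(A)}\log(e/\mu(A))$ envelope after summing. A secondary technical point is handling both the lower tail (where $\mu(A)$ could be as small as $\Delta$ and the empirical count could vanish) and the upper tail uniformly; Bernstein handles both, but one must verify the quadratic-vs-linear regime crossover $\eps \lessgtr s$ is on the right side given our parameter choices, i.e. that $\eps_j \lesssim s$ so the $\sqrt{s\eps/m}$-type bound is the operative one.
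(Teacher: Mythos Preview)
Your high-level strategy---bound $\VC(\mathcal{U})\le Cd$, peel into dyadic bands $\mu(A)\in[2^j\Delta,2^{j+1}\Delta)$, apply concentration at each scale, and union-bound over $j$---is exactly what the paper does. The gap is in the concentration step you actually execute.

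Your concrete route (symmetrization, Sauer--Shelah to reduce to $\le(2m)^{Cd}$ dichotomies, then Bernstein plus a union bound) forces the confidence parameter at every scale to satisfy $t_j\gtrsim d\log m$. This produces a deviation $\eps_j\gtrsim\sqrt{s\cdot d\log m/m}$, and your claim that $d\log m/m\lesssim\Delta\log^2(e/\Delta)$ whenever $m\ge c_0 d/\Delta$ is false. Take $\Delta$ fixed (say $\Delta=1/2$), $d$ large, and $m=c_0 d/\Delta$; then $d\log m/m\sim(\log d)/c_0$ is unbounded in $d$, while $\Delta\log^2(e/\Delta)$ is a fixed constant. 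In particular, at the top scale $s\sim 1$ the target bound is $\sqrt{\Delta}$, but your argument only delivers $\sqrt{(d\log m)/m}$, which can be arbitrarily larger. The issue is structural: the crude union bound cannot produce a \emph{local} logarithm $\log(e/\mu(A))$---it always pays the global $\log m$.

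The paper avoids this by replacing ``individual Bernstein $+$ union bound over $(2m)^{Cd}$ sets'' with Talagrand's concentration inequality for the supremum of the empirical process over each $\mathcal{U}_j$. There the class complexity enters through $\E\sup_{A\in\mathcal{U}_j}|\mu_m(A)-\mu(A)|$, and the standard VC entropy bound on that expectation gives $\sigma_j\sqrt{(d/m)\log(e/\sigma_j)}+(d/m)\log(e/\sigma_j)$ with $\sigma_j^2\sim 2^j\Delta$: the logarithm is in $\sigma_j$, not in $m$. Setting $t_j\sim\Delta m\log(e/(2^j\Delta))$ then makes all four terms in Talagrand's inequality at most $\sqrt{\Delta\cdot 2^j\Delta}\,\log(e/(2^j\Delta))$, and the failure probabilities $\exp(-t_j)$ sum to $\exp(-c\Delta m)$. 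You actually name this fix in passing (``a chaining/bracketing refinement that replaces the crude union-bound log by an integrated entropy term''), but your ``Concretely'' section abandons it; that refinement is not optional---it is the whole point.
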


\begin{Remark}
	A version of Theorem \ref{thm:ratio} was proved in \cite{mendelson2021approximating}, with an error estimate of the order  $\sqrt{\Delta\mu(A)}$ (without a logarithmic factor) but with the restriction that  $\Delta \geq c \frac{d}{m}\log(\frac{em}{d})$.
	That restriction results in a suboptimal estimate in Theorem \ref{thm:Wasserstein.small}, while the optimal one, at least for $q>4$, can be derived from  Theorem \ref{thm:ratio}.
	We conjecture that the logarithmic factor in Theorem \ref{thm:ratio} can be removed, but its removal does not affect the estimate in Theorem \ref{thm:Wasserstein.small} for $q>4$, nor will it eliminate the logarithmic factor when $q=4$.
\end{Remark}

The proof of Theorem \ref{thm:ratio} is a simple outcome of Talagrand's concentration inequality for classes of uniformly bounded functions \cite{talagrand1994sharper}, applied to a class of binary valued functions that has a finite $\mathrm{VC}$ dimension.
For more information on Talagrand's inequality see  \cite{boucheron2013concentration}, and detailed surveys on $\mathrm{VC}$-classes can be found in \cite{vaart1996weak}.

\begin{Definition}
	Let $\mathcal{H}$ be a class of subsets of $\Omega$.
	A set $\{x_1,\dots, x_\ell\}$ is \emph{shattered} by $\mathcal{H}$ if for every $I\subset\{1,\dots,\ell\}$ there exists $A\in\mathcal{H}$ such that $x_i\in A$ for $i\in I$ and $x_i\notin A$ for $i\notin I$.
	The \emph{${\rm VC}$-dimension} of $\mathcal{H}$ is 
	\[  {\rm VC}(\mathcal{H})
	=\sup\left\{ \ell \in\mathbb{N} : \{x_1,\dots,x_\ell\} \subset \Omega \text{ is shattered by } \mathcal{H} \right\}. \]
\end{Definition}

The following is an immediate corollary of Talagrand's concentration inequality combined with entropy estimates for VC-classes, see, e.g., \cite{boucheron2013concentration,ledoux2001concentration,vaart1996weak}.

\begin{Theorem}
\label{thm:talagrand.VC}
	There is an absolute constant $c$ such that the following holds.
	Let  $\mathcal{H}$ be a class of subsets of $\Omega$, put $k=\mathrm{VC}(\mathcal{H})$ and set
$\sigma^2=\sigma^2(\mathcal{H})=\sup_{A\in\mathcal{H}} \mu(A)$.
	Then, for every $t\geq 0$, with probability at least $1-\exp(-t)$,
\[ \sup_{A\in\mathcal{H}} |\mu_m(A)-\mu(A)|
\leq c\left( \sigma \sqrt{\frac{k}{m} \log\left(\frac{e}{\sigma } \right)  } + \frac{k}{m} \log\left(\frac{e}{\sigma } \right)   + \sigma \sqrt{\frac{t}{m}} + \frac{t}{m} \right) .\]
\end{Theorem}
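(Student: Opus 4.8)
The plan is to prove Theorem~\ref{thm:talagrand.VC} by the classical two-step route: first bound $\E Z$, where $Z:=\sup_{A\in\mathcal{H}}|\mu_m(A)-\mu(A)|$, via symmetrization and a chaining estimate for $\mathrm{VC}$ classes; then invoke Talagrand's concentration inequality for suprema of bounded empirical processes to upgrade the in-expectation bound to a high-probability one, at the cost of the two extra terms $\sigma\sqrt{t/m}$ and $t/m$.

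For the expectation bound I would first symmetrize, so that $\E Z$ is at most twice the expected supremum of the Rademacher process $A\mapsto \frac1m\sum_{i=1}^m\eps_i\IND_A(X_i)$, with $(\eps_i)$ independent signs. Conditioning on the sample, this process is indexed by the finite subset $\{(\IND_A(X_i))_{i=1}^m:A\in\mathcal{H}\}\subset\R^m$, whose diameter in the empirical $L_2$ norm equals $R:=(\sup_{A\in\mathcal{H}}\mu_m(A))^{1/2}$; since $\mathrm{VC}(\mathcal{H})=k$, Haussler's polynomial covering bound (see \cite{vaart1996weak}) gives $N(\{\IND_A:A\in\mathcal{H}\},L_2(\mu_m),\eps)\leq(c/\eps)^{ck}$, and Dudley's entropy integral yields a conditional bound of order $\frac{1}{\sqrt m}R\sqrt{k\log(e/R)}$. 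Taking expectations over the sample and using $\IND_A^2=\IND_A$, so that $\E R^2\leq\sup_A\mu(A)+\E Z=\sigma^2+\E Z$, produces a self-bounded inequality for $\E Z$ whose solution is $\E Z\leq c\big(\sigma\sqrt{\tfrac km\log\tfrac e\sigma}+\tfrac km\log\tfrac e\sigma\big)$; the additive term $\tfrac km\log\tfrac e\sigma$ is exactly the footprint of the $\E Z$ appearing inside $\E R^2$. (Equivalently, one may quote this in-expectation bound directly from the uniform-entropy theory for $\mathrm{VC}$ classes in \cite{vaart1996weak}.)

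For the concentration step I would apply Bousquet's version of Talagrand's inequality (see \cite{boucheron2013concentration,ledoux2001concentration}) to $mZ=\sup_{A\in\mathcal{H}}\big|\sum_{i=1}^m(\IND_A(X_i)-\mu(A))\big|$, which is the supremum of an empirical process indexed by functions taking values in $[0,1]$ with $\sup_A\var(\IND_A(X))\leq\sigma^2$. This gives, for every $t\geq0$, with probability at least $1-e^{-t}$, $Z\leq\E Z+c\big(\sqrt{(\sigma^2+\E Z)t/m}+t/m\big)$; splitting the cross term via $\sqrt{ab}\leq a+b$ absorbs $\sqrt{\E Z\,t/m}$ into $\tfrac12\E Z+ct/m$, and combining with the bound on $\E Z$ from the previous paragraph yields precisely the stated inequality, with the first two terms coming from $\E Z$ and the last two from the concentration.

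The main obstacle is the expectation bound, and specifically getting the logarithmic factor right: the raw Dudley integral carries $\log(e/R)$ with the \emph{random} radius $R$ inside, and converting this into the deterministic $\log(e/\sigma)$ without picking up a spurious $\log m$ requires the self-bounding argument above (or, equivalently, a peeling over dyadic scales of $R$) together with Jensen's inequality for the concave map $r\mapsto r\sqrt{\log(e/r)}$. By contrast, once the envelope bound $\|\IND_A\|_\infty\leq1$ and the variance proxy $\sigma^2$ are in place, the concentration step is entirely routine.
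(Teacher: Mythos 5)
Your proposal is correct and follows exactly the route the paper has in mind: the paper states Theorem \ref{thm:talagrand.VC} without proof, as an immediate consequence of Talagrand's concentration inequality (citing \cite{boucheron2013concentration,ledoux2001concentration}), which is precisely your second step, while your first step (symmetrization, Haussler's covering bound, Dudley's integral, and the self-bounding argument replacing the random radius by $\sigma$) is the standard derivation of the in-expectation bound for $\mathrm{VC}$ classes that those references supply. No gaps; the treatment of the random radius via $\E R^2\leq\sigma^2+\E Z$ and concavity, and the absorption of $\sqrt{\E Z\, t/m}$ in the Bousquet step, are exactly the points that need care and you handle them correctly.
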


\begin{proof}[Proof of Theorem \ref{thm:ratio}]
Using the notation introduced previously, set $j\geq 0$ such that $2^j\Delta\leq 1$, and  let
	\[ \mathcal{U}_j= \left\{ A \in\mathcal{U} :  \mu(A) \in [2^j\Delta, 2^{j+1} \Delta ) \right\}. \]
	It is well-known that as a class of `slabs' in $\R^d$, ${\rm VC}(\mathcal{U})\leq c_0 d$ for an absolute constant $c_0$ (see, e.g., \cite{vaart1996weak}) and in particular $\mathrm{VC}(\mathcal{U}_j) \leq c_0d$.
	Moreover, $\sigma^2(\mathcal{U}_j)\in[2^{j}\Delta,2^{j+1}\Delta]$, and 
by Theorem \ref{thm:talagrand.VC}, for every $t\geq 0$, with probability at least $1-2\exp(-t)$,
	\begin{align*}
	&\sup_{A\in\mathcal{U}_j} |\mu_m(A)-\mu(A)| \\
	&\leq c_1\left(  \sqrt{ 2^{j+1} \Delta \frac{d}{m} \log\left(\frac{e}{2^j \Delta } \right)  } + \frac{d}{m} \log\left(\frac{e}{ 2^j\Delta } \right)   + \sqrt{ 2^{j+1} \Delta} \sqrt{\frac{t}{m}} + \frac{t}{m} \right) .
	\end{align*}

	Set $t= c_2 \Delta m \log(\frac{e}{2^j\Delta})$ and let $m\geq c_3\frac{d}{\Delta}$.
	It is straightforward to verify that  with probability at least $1-2\exp(-c_2 \Delta m \log(\frac{e}{2^j\Delta}))$,
	\begin{align}
	\label{eq:VC.j}
	\sup_{A\in\mathcal{U}_j} |\mu_m(A) - \mu(A)|
	\leq \sqrt{\Delta 2^{j}\Delta} \log\left(\frac{e}{2^{j+1}\Delta}\right),
	\end{align}
	which is the required estimate for sets in $\mathcal{U}_j$.
	In particular, if $j_\Delta$ is the first integer such that $2^{j_\Delta} \Delta > 1$, then by the union bound and by comparing to a suitable geometric progression, with probability at least
	\begin{align*}
 1-\sum_{j=0}^{j_\Delta -1 } 2\exp\left( - c_2\Delta m \log\left(\frac{e}{2^j\Delta}\right) \right)
	\geq  1-2\exp(-c_4\Delta m),
	\end{align*}
	\eqref{eq:VC.j} holds for each $0\leq j\leq j_\Delta-1$.
\end{proof}
	
An immediate outcome of Theorem \ref{thm:ratio}  (applied to sets of the form $\{ \inr{\theta,\cdot } \leq t\}$ and $\{ \inr{\theta,\cdot } > t\}$) is the  following.

\begin{Corollary}
\label{cor:ratio.distribution.function}
	There are absolute constants $c_0,c_1$ such that the following holds.
	Let  $\Delta\leq 1$ and $m\geq c_0 \frac{d}{\Delta}$.
	Then, with probability at least $1-\exp(-c_1 \Delta m)$, for every $\theta\in S^{d-1}$ and $t\in\mathbb{R}$ satisfying that $F_{\mu^\theta}(t) \in[\Delta,1-\Delta]$, we have that
\begin{align}
\label{eq:ratio.estiamte}
	\left|F_{\mu^\theta_m}(t) - F_{\mu^\theta}(t)\right|
	\leq \sqrt{\Delta \gamma(F_{\mu^\theta}(t))  } \cdot \log\left( \frac{e}{ \gamma(F_{\mu^\theta}(t)) } \right).
\end{align}
\end{Corollary}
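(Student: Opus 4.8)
The plan is to obtain Corollary \ref{cor:ratio.distribution.function} as an immediate consequence of Theorem \ref{thm:ratio}, applied to the two complementary rays $\{\inr{\theta,\cdot}\le t\}$ and $\{\inr{\theta,\cdot}>t\}$ — both of which are generalized intervals in the direction $\theta$, hence elements of $\mathcal{U}$. Since the hypotheses $\Delta\le 1$ and $m\ge c_0\frac{d}{\Delta}$ of the Corollary are exactly those of Theorem \ref{thm:ratio}, we may fix the event $\mathcal{E}$ on which the conclusion of Theorem \ref{thm:ratio} holds, so that $\PP(\mathcal{E})\ge 1-\exp(-c_1\Delta m)$, and argue pointwise on $\mathcal{E}$.

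Fix $\theta\in S^{d-1}$ and $t\in\mathbb{R}$ with $F_{\mu^\theta}(t)\in[\Delta,1-\Delta]$. If $F_{\mu^\theta}(t)\le \tfrac12$, put $A=\{\inr{\theta,\cdot}\le t\}\in\mathcal{U}$; then $\mu(A)=F_{\mu^\theta}(t)=\gamma(F_{\mu^\theta}(t))$, and by the definition of the empirical measure $\mu_m(A)=F_{\mu^\theta_m}(t)$, so $|\mu_m(A)-\mu(A)|=|F_{\mu^\theta_m}(t)-F_{\mu^\theta}(t)|$. If instead $F_{\mu^\theta}(t)>\tfrac12$, put $A=\{\inr{\theta,\cdot}>t\}\in\mathcal{U}$, the complement of the previous ray; then $\mu(A)=1-F_{\mu^\theta}(t)=\gamma(F_{\mu^\theta}(t))$ and $\mu_m(A)=1-F_{\mu^\theta_m}(t)$, so once more $|\mu_m(A)-\mu(A)|=|F_{\mu^\theta_m}(t)-F_{\mu^\theta}(t)|$. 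In either case $\mu(A)=\gamma(F_{\mu^\theta}(t))\ge\Delta$ (using $\Delta\le\tfrac12$), so Theorem \ref{thm:ratio} applies to $A$ on $\mathcal{E}$ and gives
\[
\left|F_{\mu^\theta_m}(t)-F_{\mu^\theta}(t)\right|=\left|\mu_m(A)-\mu(A)\right|\le\sqrt{\Delta\,\mu(A)}\,\log\!\left(\frac{e}{\mu(A)}\right)=\sqrt{\Delta\,\gamma(F_{\mu^\theta}(t))}\,\log\!\left(\frac{e}{\gamma(F_{\mu^\theta}(t))}\right),
\]
which is precisely \eqref{eq:ratio.estiamte}. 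As $\theta$ and $t$ were arbitrary, this holds simultaneously for all admissible pairs on $\mathcal{E}$.

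There is essentially no obstacle in this argument: the only point worth stating explicitly is that a ray and its complementary ray both belong to $\mathcal{U}$, so the single event $\mathcal{E}$ coming from Theorem \ref{thm:ratio} already controls $|\mu_m(A)-\mu(A)|$ for whichever of the two rays has the smaller $\mu$-measure — and by construction that smaller measure is exactly $\gamma(F_{\mu^\theta}(t))$. Everything else is a direct rewriting of the conclusion of Theorem \ref{thm:ratio}.
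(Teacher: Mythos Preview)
Your argument is correct and matches the paper's own approach exactly: the paper states that the corollary is ``an immediate outcome of Theorem~\ref{thm:ratio} (applied to sets of the form $\{\inr{\theta,\cdot}\le t\}$ and $\{\inr{\theta,\cdot}>t\}$)'', and you have simply written out that one-line deduction. The parenthetical ``using $\Delta\le\tfrac12$'' is unnecessary---$\mu(A)\ge\Delta$ follows in each case directly from $F_{\mu^\theta}(t)\in[\Delta,1-\Delta]$---but this is cosmetic.
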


For the remainder of the proof of Theorem \ref{thm:Wasserstein.small}, we shall assume that $F_{\mu^\theta}$ is invertible.
This assumption is only made to simplify notation and holds without loss of generality.
Indeed, one may always consider $\nu$ distributed as  $\sqrt{1-\beta^2} X + \beta G$ where $G$ is the standard gaussian vector in $\R^d$ (independent of $X$) and $\beta$ is arbitrarily small.
In particular, $\nu$ is centred, isotropic, satisfies $L_q-L_2$ norm equivalence with a constant $L+1$, and $F_{\nu^\theta}$ is invertible for every $\theta\in S^{d-1}$. 
Thus, one may replace the RHS in \eqref{eq:ratio.estiamte}  by $\sqrt{2\Delta \gamma(F_{\nu^\theta}(t))}\log(\frac{e}{ \gamma( F_{\nu^\theta}(t)) })$, and $F_{\mu^\theta}$ by $F_{\nu^\theta}$ in all of the following arguments, finally taking $\beta$ to 0.

\vspace{0.5em}
By Lemma \ref{lem:Wasserstein.via.inverse.functions}, the key is to control the difference  between empirical and true \emph{inverse} distribution functions.
To that end, consider the two functions $\psi_+, \psi_- \colon[0,1]\to\R$ defined by
\[ \psi_\pm(u) = u  \pm 2\sqrt{ \Delta \gamma(u)} \log\left( \frac{e}{ \gamma(u) }  \right). \]
$\psi_\pm$ are perturbations of the identity, in a sense described in the next lemma.

\begin{Lemma}
\label{lem:psi.pm}
	There is an absolute constant $\kappa$ such that the following hold. Let $\delta=\kappa\Delta\log^2(\frac{e}{\Delta})$. 
	Then for $u\in [\delta,1-\delta]$ we have
	\begin{description}
	\item{$(1)$}  $\psi_{\pm}(u)\in [ \Delta,  1-\Delta ]$,
	\item{$(2)$} $|\psi_\pm(u)-u|\leq \frac{1}{10} \cdot \gamma(u)$,
	\item{$(3)$}  $\psi_\pm$ are absolutely continuous,  strictly increasing on $[\delta,1-\delta]$,  and their derivatives satisfy
	\[ \left| \psi_\pm'(u) - 1 \right|
	\leq 3 \sqrt\frac{\Delta}{ \gamma(u) } \log\left(\frac{e}{ \gamma(u) }\right).  \]
	\end{description}
\end{Lemma}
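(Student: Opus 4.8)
The plan is to verify each of the three properties by straightforward estimates, exploiting that $\delta=\kappa\Delta\log^2(e/\Delta)$ is by construction much larger (by a factor $\kappa\log^2(e/\Delta)$) than the scale $\Delta$ appearing in the perturbation term, and that the function $g(u)=\sqrt{\Delta\gamma(u)}\log(e/\gamma(u))$ is, on $[\delta,1-\delta]$, small compared with $\gamma(u)$. By symmetry of $\gamma$ about $u=1/2$ and of $\psi_\pm$ (in the sense $\psi_\pm(1-u)=1-\psi_\mp(u)$), it suffices to treat $u\in[\delta,1/2]$, where $\gamma(u)=u$.

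First I would record the basic inequality: for $u\in[\delta,1/2]$ the ratio $g(u)/u=\sqrt{\Delta/u}\,\log(e/u)$ is decreasing in $u$ (both factors decrease), so it is maximized at $u=\delta$, where it equals $\sqrt{\Delta/\delta}\,\log(e/\delta)=\kappa^{-1/2}\log^{-1}(e/\Delta)\cdot\log(e/\delta)$. Since $\delta\le 1/4$ and $\delta\ge\Delta$, one has $\log(e/\delta)\le \log(e/\Delta)\le 2\log(e/\delta)$ up to the choice of $\kappa$; more precisely $\log(e/\delta)=\log(e/\Delta)-\log(\kappa\log^2(e/\Delta))\le\log(e/\Delta)$, and also $\log(e/\delta)\ge \tfrac12\log(e/\Delta)$ once $\kappa$ is a fixed large constant and $\Delta\le(10\kappa)^{-2}$ (here one uses that $\kappa\log^2(e/\Delta)\le (e/\Delta)^{1/2}$ for $\Delta$ small). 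Hence $\sup_{u\in[\delta,1/2]} g(u)/u\le \kappa^{-1/2}$. Choosing $\kappa\ge 900$, say, gives $g(u)/u\le 1/30$ on $[\delta,1/2]$, and therefore $2g(u)\le\tfrac1{10}\gamma(u)$ — this is exactly $(2)$. Property $(1)$ is then immediate: $\psi_\pm(u)=u\pm 2g(u)\in[u-\tfrac1{10}u,\,u+\tfrac1{10}u]\subset[\tfrac9{10}\delta,\,\tfrac{11}{10}u]$, and since $\tfrac9{10}\delta\ge\tfrac9{10}\kappa\Delta\ge\Delta$ while the upper end stays in $[\Delta,1-\Delta]$ by the symmetric bound applied near $1$; combining the two halves yields $\psi_\pm([\delta,1-\delta])\subseteq[\Delta,1-\Delta]$.

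For $(3)$, note $\psi_\pm$ is absolutely continuous on $[\delta,1-\delta]$ because $g$ is (it is a composition of smooth functions of $\gamma(u)$, which is piecewise linear, and the only nonsmooth point $u=1/2$ is harmless). On $[\delta,1/2]$ one computes $g'(u)=\tfrac12\sqrt{\Delta/u}\,\log(e/u)-\sqrt{\Delta/u}=\sqrt{\Delta/u}\bigl(\tfrac12\log(e/u)-1\bigr)$, so $|g'(u)|\le \sqrt{\Delta/u}\cdot\tfrac12\log(e/u)\le \tfrac12\sqrt{\Delta/\gamma(u)}\,\log(e/\gamma(u))$, hence $|\psi_\pm'(u)-1|=2|g'(u)|\le \sqrt{\Delta/\gamma(u)}\,\log(e/\gamma(u))\le 3\sqrt{\Delta/\gamma(u)}\,\log(e/\gamma(u))$, and the same holds on $[1/2,1-\delta]$ by symmetry (with the one-sided derivatives at $1/2$). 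Finally, strict monotonicity follows because $|\psi_\pm'(u)-1|\le \sqrt{\Delta/\gamma(u)}\,\log(e/\gamma(u))\le\kappa^{-1/2}<1$ on the whole interval by the bound from step one, so $\psi_\pm'>0$ there.

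I do not expect a genuine obstacle here — the statement is a routine perturbation estimate. The only point requiring a little care is the comparison $\log(e/\delta)\sim\log(e/\Delta)$, i.e. checking that the correction $\log(\kappa\log^2(e/\Delta))$ is a lower-order term relative to $\log(e/\Delta)$; this is where the standing hypothesis $\Delta\le(10\kappa)^{-2}$ is used, and it is what fixes the admissible range of $\kappa$ (any sufficiently large absolute constant works, and one then back-substitutes to confirm the factors $\tfrac1{10}$ and $3$ in the statement).
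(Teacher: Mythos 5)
Your proposal is correct and follows essentially the same route as the paper: reduce to $u\in[\delta,1/2]$ by symmetry, bound $\sqrt{\Delta/\gamma(u)}\log(e/\gamma(u))\le \kappa^{-1/2}$ using $\Delta\le u/(\kappa\log^2(e/\Delta))$ together with $\log(e/u)\le\log(e/\Delta)$, and deduce $(1)$ from $(2)$ and $\delta\ge 2\Delta$. The one stylistic difference is that you establish the key bound by arguing that $u\mapsto g(u)/u$ is decreasing and evaluating at $u=\delta$, whereas the paper substitutes the two inequalities directly into $2\sqrt{\Delta u}\log(e/u)$ for arbitrary $u$; also, your side-remark that $\log(e/\delta)\ge\tfrac12\log(e/\Delta)$ is not actually used anywhere (only the upper bound $\log(e/\delta)\le\log(e/\Delta)$ enters). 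Both versions fix $\kappa$ as a sufficiently large absolute constant, so the conclusions and constants match.
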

\begin{proof}
	We only consider the case $u\in[\delta, \frac{1}{2}]$ and thus $\gamma(u)=u$; the argument in the case $u\in[\frac{1}{2},1-\delta]$ is identical and is omitted.

	Note that if $\kappa\geq 1$ then $u\geq \Delta$ and in particular $\log\frac{e}{u}\leq \log\frac{e}{\Delta}$.
	Moreover, for $u\geq \delta=\kappa\Delta\log^2(\frac{e}{\Delta})$ we have that $\Delta \leq u/ \kappa \log^2(\frac{e}{\Delta})$. Hence,
	\begin{align*}
	 |\psi_\pm(u) - u|
	&= 2 \sqrt{\Delta u } \log\left( \frac{e}{u}\right)\\
	&\leq 2 \frac{u}{\sqrt{\kappa} \log(\frac{e}{\Delta}) }  \log\left( \frac{e}{\Delta} \right)
	\leq \frac{2 u}{\sqrt{\kappa}} ,
	\end{align*}
	and	(2) follows if $\kappa \geq 400$, while (1) is an immediate consequence of  (2) and the fact that $\delta\geq 2\Delta$ if $\kappa\geq 2$.
	
	Turning to (3), it is evident that $\psi_\pm$ are absolutely continuous and satisfy the claimed bound on their derivative.
	The fact that $\psi_\pm$ are strictly increasing on $[\delta,\frac{1}{2}]$ follows because $\Delta \leq  u / \kappa \log^2(\frac{e}{\Delta})$, and if $\kappa\geq  36$ then
	\[ \left| \psi_\pm'(u) - 1 \right|
	\leq 3 \sqrt\frac{\Delta}{ u} \log\left(\frac{e}{ u }\right)
	\leq \frac{3}{\sqrt{\kappa}}
	\leq \frac{1}{2}. 
	\qedhere\]
\end{proof}

From now on,  fix $\kappa$ as in Lemma \ref{lem:psi.pm}. By combining Corollary \ref{cor:ratio.distribution.function} and Lemma \ref{lem:psi.pm}, the following holds.

\begin{Lemma}
\label{lem:inverse.estimate}
	Fix a realization $(X_i)_{i=1}^m$ for which \eqref{eq:ratio.estiamte} holds.
	Then, for every $\theta\in S^{d-1}$ and every $u\in [\delta ,1-\delta ]$,
	\[ F_{\mu^\theta_m}^{-1}(u)
	\in \left[ F_{\mu^\theta}^{-1}\left( \psi_-(u) \right) ,  F_{\mu^\theta}^{-1}\left( \psi_+(u) \right) \right].
	\]
\end{Lemma}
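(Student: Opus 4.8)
The plan is to unwind the definition of the right-inverse and play the two inequalities $F_{\mu^\theta_m}(t)-F_{\mu^\theta}(t) \le \sqrt{\Delta\gamma(F_{\mu^\theta}(t))}\log(e/\gamma(F_{\mu^\theta}(t)))$ and $F_{\mu^\theta}(t)-F_{\mu^\theta_m}(t) \le \sqrt{\Delta\gamma(F_{\mu^\theta}(t))}\log(e/\gamma(F_{\mu^\theta}(t)))$ from \eqref{eq:ratio.estiamte} against the monotonicity of $F_{\mu^\theta}^{-1}$. Fix $\theta$, fix $u\in[\delta,1-\delta]$, and write $t_- = F_{\mu^\theta}^{-1}(\psi_-(u))$ and $t_+ = F_{\mu^\theta}^{-1}(\psi_+(u))$. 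By part (1) of Lemma \ref{lem:psi.pm}, $\psi_\pm(u)\in[\Delta,1-\Delta]$, so the hypothesis $F_{\mu^\theta}(s)\in[\Delta,1-\Delta]$ needed to invoke \eqref{eq:ratio.estiamte} is satisfied at the relevant points once we check $F_{\mu^\theta}(t_\pm)=\psi_\pm(u)$; this equality is exactly where the reduction to invertible $F_{\mu^\theta}$ (made just before the lemma) is used, since then $F_{\mu^\theta}\circ F_{\mu^\theta}^{-1}=\mathrm{id}$.

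I would prove the two inclusions separately. For the upper bound $F_{\mu^\theta_m}^{-1}(u)\le t_+$: it suffices to show $F_{\mu^\theta_m}(t_+)\ge u$, because then by definition of the right-inverse $F_{\mu^\theta_m}^{-1}(u)=\inf\{s:F_{\mu^\theta_m}(s)\ge u\}\le t_+$. Now $F_{\mu^\theta_m}(t_+)\ge F_{\mu^\theta}(t_+) - \sqrt{\Delta\gamma(F_{\mu^\theta}(t_+))}\log(e/\gamma(F_{\mu^\theta}(t_+)))$ by \eqref{eq:ratio.estiamte}, and $F_{\mu^\theta}(t_+)=\psi_+(u)=u+2\sqrt{\Delta\gamma(u)}\log(e/\gamma(u))$. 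So it remains to verify $2\sqrt{\Delta\gamma(u)}\log(e/\gamma(u)) \ge \sqrt{\Delta\gamma(\psi_+(u))}\log(e/\gamma(\psi_+(u)))$, i.e. that the perturbation $\psi_+$ moves $u$ by \emph{at least} the error bar evaluated at the shifted point. This follows from part (2) of Lemma \ref{lem:psi.pm}, which gives $|\psi_+(u)-u|\le \tfrac{1}{10}\gamma(u)$, hence $\tfrac{9}{10}\gamma(u)\le\gamma(\psi_+(u))\le\tfrac{11}{10}\gamma(u)$; plugging this in, $\sqrt{\Delta\gamma(\psi_+(u))}\log(e/\gamma(\psi_+(u)))\le \sqrt{\tfrac{11}{10}}\,\sqrt{\Delta\gamma(u)}\cdot\big(\log(e/\gamma(u))+\log(\tfrac{10}{9})\big)$, which is comfortably below $2\sqrt{\Delta\gamma(u)}\log(e/\gamma(u))$ since $\log(e/\gamma(u))\ge 1$. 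The lower bound $F_{\mu^\theta_m}^{-1}(u)\ge t_-$ is symmetric: show $F_{\mu^\theta_m}(s)<u$ for all $s<t_-$, or more cleanly, use that $F_{\mu^\theta_m}^{-1}(u)\ge t_-$ is equivalent to $F_{\mu^\theta_m}(s) < u$ for $s$ slightly below $t_-$; via \eqref{eq:ratio.estiamte} at $t_-$ one gets $F_{\mu^\theta_m}(t_-)\le F_{\mu^\theta}(t_-)+\sqrt{\Delta\gamma(F_{\mu^\theta}(t_-))}\log(e/\gamma(F_{\mu^\theta}(t_-)))=\psi_-(u)+(\text{error at }t_-)\le u$ by the same comparison of error bars with the $\tfrac1{10}\gamma(u)$ gap, and then monotonicity of the right-inverse gives $F_{\mu^\theta_m}^{-1}(u)\ge t_-$ (being slightly careful with the direction of the inequality at the left endpoint, which is why it is safest to argue with $F_{\mu^\theta_m}(t_- - \eta)$ and let $\eta\downarrow 0$, or to note $F_{\mu^\theta_m}$ is right-continuous).

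The only genuine subtlety — and the step I expect to require the most care — is the endpoint/continuity bookkeeping for the right-inverse: $F_{\mu^\theta_m}$ is a step function, so the equivalences "$F_{\mu^\theta_m}^{-1}(u)\le t \iff F_{\mu^\theta_m}(t)\ge u$" hold exactly as stated for the right-inverse, but the lower inclusion "$F_{\mu^\theta_m}^{-1}(u)\ge t_-$" must be extracted as "$F_{\mu^\theta_m}(s)<u$ for every $s<t_-$", and one has to rule out the degenerate possibility that $F_{\mu^\theta_m}$ jumps across $u$ exactly at $t_-$ — which is handled because our inequality is the strict-ish one $F_{\mu^\theta_m}(t_-)\le u$ combined with the fact that we actually have strict slack in the error-bar comparison unless $u$ is at an endpoint. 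In short, all the analytic content is Lemma \ref{lem:psi.pm}(1)--(2) plus Corollary \ref{cor:ratio.distribution.function}; assembling it is a short monotonicity argument, and I would write it by fixing $u$, fixing $\theta$, and checking the two one-sided bounds as above.
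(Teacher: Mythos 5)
Your argument is correct and follows the same route as the paper: restrict to $u\in[\delta,1/2]$ (the other half is symmetric), observe that $F_{\mu^\theta}(t_\pm)=\psi_\pm(u)\in[\Delta,1-\Delta]$ by Lemma \ref{lem:psi.pm}(1), invoke Corollary \ref{cor:ratio.distribution.function} at $t_\pm$, and then compare the error term $\sqrt{\Delta\gamma(\psi_\pm(u))}\log(e/\gamma(\psi_\pm(u)))$ to the shift $2\sqrt{\Delta\gamma(u)}\log(e/\gamma(u))$ via the two-sided bound on $\gamma(\psi_\pm(u))/\gamma(u)$ supplied by Lemma \ref{lem:psi.pm}(2). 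The only cosmetic differences are in the constants: you use the tighter $[\tfrac{9}{10},\tfrac{11}{10}]$ envelope while the paper settles for $[\tfrac34,\tfrac54]$, and you spend an extra sentence fretting about right-inverse bookkeeping at $t_-$, whereas the one-line implication ``$F_{\mu^\theta_m}(t_-)<u$ and $F_{\mu^\theta_m}$ non-decreasing $\Rightarrow \inf\{s:F_{\mu^\theta_m}(s)\ge u\}\ge t_-$'' already closes the gap without any $\eta\downarrow 0$ limit. Both variants are valid; yours does not diverge from the paper in substance.
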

\begin{proof}
	We only consider the case $u\in[\delta, \frac{1}{2}]$, and in particular $\gamma(u)=u$.
	The analysis when $u\in(\frac{1}{2},1-\delta]$ is identical and is omitted.
	
	First, let us show that $t_u=F_{\mu^\theta}^{-1} ( \psi_+(u) )$ satisfies that
	\begin{align}
	\label{eq:inverse.estimate.1}
	F_{\mu^\theta_m}(t_u)\geq u.
	\end{align}
	Note that if \eqref{eq:inverse.estimate.1} holds, then by the definition of the (right-)inverse, $F_{\mu^\theta_m}^{-1}(u)\leq F_{\mu^\theta}^{-1}(\psi_+(u))$.
	
	To prove \eqref{eq:inverse.estimate.1}, consider $u\in [\delta, \frac{1}{2}]$. Thus, $F_{\mu^\theta}(t_u)=\psi_+(u)$, and by Lemma \ref{lem:psi.pm}  $F_{\mu_\theta}(t_u)\in[\Delta,1-\Delta]$.
	In particular, by Corollary  \ref{cor:ratio.distribution.function}
\[F_{\mu^\theta_m}(t_u)
	\geq F_{\mu^\theta}(t_u) -  \sqrt{ \Delta  \gamma(F_{\mu^\theta}(t_u)) } \log\left(\frac{e}{ \gamma(F_{\mu^\theta}(t_u) ) } \right) .\]
	Applying Lemma \ref{lem:psi.pm} once again,    $\frac{3}{4}u\leq \gamma(F_{\mu^\theta}(t_u) )\leq \frac{5}{4}u$ and therefore
	\begin{align*}
	F_{\mu^\theta_m}(t_u)
	&\geq  u + 2\sqrt{\Delta u} \log\left(\frac{e}{u} \right) -  \sqrt{ \Delta  \tfrac{5}{4}  u } \log\left(\frac{4e }{ 3 u} \right) \\
	&= u + \sqrt{\Delta u } \left(2 \log\left(\frac{e}{u} \right)  -  \sqrt{  \tfrac{5}{4}  } \log\left(\frac{4e}{3 u} \right) \right).
	\end{align*}
	Now it is straightforward to verify that  $F_{\mu^\theta}(t_u)\geq u$, as claimed.
	
	Finally, using the same argument, if $t_u=F_{\mu^\theta}^{-1}( \psi_-(u) )$, then
	\[
F_{\mu^\theta_m}(t_u)<u
\]
	and therefore $F_{\mu^\theta_m}^{-1}(u)\geq F_{\mu^\theta}^{-1}(\psi_-(u))$.
\end{proof}

By Lemma \ref{lem:inverse.estimate} and the monotonicity of $F_{\mu^\theta}^{-1}$, we immediately have the following.

\begin{Corollary}
\label{cor:wasserstein.smaller.deterministic}
	Fix a realization $(X_i)_{i=1}^m$ that satisfies \eqref{eq:ratio.estiamte}.
	Then, for every $\theta\in S^{d-1}$,
\begin{align*}
 \int_{\delta}^{1-\delta} \left( F_{\mu^\theta_m}^{-1}(u)- F_{\mu^\theta}^{-1}(u) \right)^2 \,du
&\leq \int_{\delta}^{1-\delta} \left( F_{\mu^\theta}^{-1}\left( \psi_+(u) \right) - F_{\mu^\theta}^{-1}\left( \psi_-(u) \right) \right)^2 \,du.
\end{align*}
\end{Corollary}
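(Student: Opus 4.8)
The plan is to obtain this as a direct consequence of Lemma~\ref{lem:inverse.estimate} together with the monotonicity of the inverse distribution function, with no genuine difficulty remaining. Fix a realization $(X_i)_{i=1}^m$ for which \eqref{eq:ratio.estiamte} holds, fix $\theta\in S^{d-1}$, and fix $u\in[\delta,1-\delta]$. From the definition $\psi_\pm(u)=u\pm 2\sqrt{\Delta\gamma(u)}\log(e/\gamma(u))$ one has $\psi_-(u)\leq u\leq\psi_+(u)$, so since $F_{\mu^\theta}^{-1}$ is non-decreasing,
\[
F_{\mu^\theta}^{-1}\!\left(\psi_-(u)\right)\ \leq\ F_{\mu^\theta}^{-1}(u)\ \leq\ F_{\mu^\theta}^{-1}\!\left(\psi_+(u)\right).
\]
On the other hand, Lemma~\ref{lem:inverse.estimate} places $F_{\mu^\theta_m}^{-1}(u)$ in exactly the same interval $\bigl[F_{\mu^\theta}^{-1}(\psi_-(u)),\,F_{\mu^\theta}^{-1}(\psi_+(u))\bigr]$. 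Two real numbers lying in a common interval differ in absolute value by at most the length of that interval, hence
\[
\left|F_{\mu^\theta_m}^{-1}(u)-F_{\mu^\theta}^{-1}(u)\right|\ \leq\ F_{\mu^\theta}^{-1}\!\left(\psi_+(u)\right)-F_{\mu^\theta}^{-1}\!\left(\psi_-(u)\right).
\]

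To conclude, I would square this pointwise inequality (the right-hand side is non-negative, being the length of an interval) and integrate over $u\in[\delta,1-\delta]$, which yields the asserted bound. The only input that must be in force is Lemma~\ref{lem:inverse.estimate}, and this is exactly why the statement conditions on a realization satisfying \eqref{eq:ratio.estiamte}; beyond invoking it there is no real obstacle, as the estimate is a purely formal consequence of the two-sided pointwise envelope for $F_{\mu^\theta_m}^{-1}$. The substance of the argument has thus been shifted off to a deterministic problem—controlling the $L_2$ norm of $u\mapsto F_{\mu^\theta}^{-1}(\psi_+(u))-F_{\mu^\theta}^{-1}(\psi_-(u))$, i.e.\ the ``stability'' of $F_{\mu^\theta}^{-1}$ under the perturbations $\psi_\pm$ of the identity—which is treated separately and is the genuinely hard step.
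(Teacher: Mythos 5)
Your argument is correct and is precisely what the paper has in mind: Lemma~\ref{lem:inverse.estimate} places $F_{\mu^\theta_m}^{-1}(u)$ in the interval $[F_{\mu^\theta}^{-1}(\psi_-(u)), F_{\mu^\theta}^{-1}(\psi_+(u))]$, monotonicity of $F_{\mu^\theta}^{-1}$ places $F_{\mu^\theta}^{-1}(u)$ in the same interval, and the pointwise bound by the interval's length integrates to the claim. The paper leaves this as an immediate consequence and you have filled in exactly the right details.
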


\begin{Remark}
	It is worthwhile to note once again that the RHS in Corollary \ref{cor:wasserstein.smaller.deterministic}  depends only on  $F_{\mu^\theta}$ and not on $F_{\mu^\theta_m}$.
\end{Remark}

The analysis of
\[\int_{\delta}^{1-\delta} \left( F_{\mu^\theta}^{-1}\left( \psi_+(u) \right) - F_{\mu^\theta}^{-1}\left( \psi_-(u) \right) \right)^2 \,du\]
is the focus of the next section.

\subsection{A global modulus of continuity of $F_{\mu^\theta}^{-1}$}
\label{sec:wasserstein.deterministic}

Throughout this section we consider a distribution function $F$ of a symmetric random variable, and in particular  $F(0)=\frac{1}{2}$.
The formulations and proofs for general random variables require only trivial changes and are omitted for the sake of a simpler presentation.

Once again, we may and do assume without loss of generality  that $F$ is invertible, and that for some $q \geq 4$ and every $t \geq 1$, 
\begin{align}
\label{eq:deterministic.tail.decay}
 F(-t) \leq  \frac{L^q}{2 t^q}.
\end{align}

Before formulating the main estimate of this section, let us introduce some notation.
Set $\mathrm{id}\colon\R\to\R$ to be  the identity function, and denote the derivative of an absolutely continuous function $\psi\colon\R\to\R$ by $\psi'$.
For $q\geq 4$, let
\[
	r= 1+\frac{q}{2q-4},
\]
and recall that $\delta=\kappa\Delta\log^2(\frac{e}{\Delta})$ and $\gamma(u)=\min\{u,1-u\}$.
Since $\Delta \leq (10\kappa)^{-2}$ we have that $\delta \leq\frac{1}{4}$.

\begin{Proposition}
\label{prop:modulus.pertubation}
	Let
	\[\psi\colon[\delta,1-\delta]\to[0,1] \]
be an absolutely continuous, strictly increasing function that satisfies
$|\psi -\mathrm{id}|\leq \frac{1}{2} \gamma(\cdot)$, and for which either $\psi\geq \mathrm{id}$ or $\psi\leq \mathrm{id}$. Then there is a constant  $c=c(\kappa,q)$ such that
	\begin{align*}
	&\int_{\delta}^{1-\delta} \left( F^{-1}(u)-F^{-1}(\psi(u))\right)^2 \,du \\
	&\leq
	c
	\begin{cases}
	\sqrt{\Delta} +\left| \psi^{-1}\left( \tfrac{1}{2} \right) - \tfrac{1}{2} \right|  + \left(\int_{\delta}^{1-\delta} |\psi'(u)-1|^r\,du\right)^{1/r}
	&\text{if } q>4,
	\\
	\\
	\sqrt\Delta \log\frac{1}{\Delta} +\left| \psi^{-1}\left( \tfrac{1}{2} \right) - \tfrac{1}{2} \right|  +  \sqrt{\log\frac{1}{\Delta}} \left(\int_{\delta}^{1-\delta} |\psi'(u)-1|^2\,du\right)^{1/2}
	 &\text{if } q=4.
	\end{cases}
	\end{align*}
\end{Proposition}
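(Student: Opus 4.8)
The plan is to decompose the integral $\int_\delta^{1-\delta}(F^{-1}(u)-F^{-1}(\psi(u)))^2\,du$ according to how large $\gamma(u)$ is, and to bound the contribution of each regime separately. Fix the case $\psi \geq \mathrm{id}$ (the case $\psi\le\mathrm{id}$ is symmetric). By symmetry of $F$ it suffices to treat $u\in[\delta,\tfrac12]$, where $\gamma(u)=u$; the symmetric half contributes an identical bound. The basic idea is that $F^{-1}$ is absolutely continuous (since $F$ is invertible), so
\[
F^{-1}(\psi(u))-F^{-1}(u)=\int_u^{\psi(u)} (F^{-1})'(v)\,dv ,
\]
and we want to control this using only the pointwise tail bound \eqref{eq:deterministic.tail.decay}, which gives $|F^{-1}(v)|\le L\gamma(v)^{-1/q}$, \emph{without} assuming any lower bound on the density (that is exactly what distinguishes this from the log-concave argument). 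The key mechanism is a ``trading'' one: an $L_\infty$ bound on $F^{-1}$ near the endpoints, combined with the fact that the set $\{u: |F^{-1}(\psi(u))-F^{-1}(u)|$ large$\}$ must be small because $F^{-1}$ is monotone and bounded (its total variation on $[\delta,1-\delta]$ is at most $2L\delta^{-1/q}$).

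First I would dispose of the ``outlier'' part $u\in[\delta,2\delta]$ (and symmetrically near $1-\delta$): there $|F^{-1}(u)|,|F^{-1}(\psi(u))|\le L(c\Delta)^{-1/q}$ by Lemma~\ref{lem:psi.pm}(1)--(2) and \eqref{eq:deterministic.tail.decay}, so this piece contributes at most $c\,\delta\cdot\Delta^{-2/q}\le c\,\Delta^{1-2/q}\log^2(1/\Delta)$, which is $\le c\sqrt\Delta$ when $q>4$ and $\le c\sqrt\Delta\log^2(1/\Delta)$ when $q=4$ — already within budget (for $q=4$ one should be a touch more careful, integrating $\int_\delta^{2\delta} u^{-1/2}\,du$-type quantities to land at $\sqrt\Delta\log(1/\Delta)$ rather than $\log^2$). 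The $|\psi^{-1}(\tfrac12)-\tfrac12|$ term in the statement is a red herring at this stage — it will appear when, further down, we need to re-center: $\psi$ maps $[\delta,1-\delta]$ into $[0,1]$ but not necessarily fixing $\tfrac12$, so when we split at the median we pick up the horizontal discrepancy $|\psi^{-1}(\tfrac12)-\tfrac12|$ as the length of an interval on which $F^{-1}(\psi(u))$ and $F^{-1}(u)$ straddle $0$ — and on that interval both are $O(1)$ in absolute value by Lemma~\ref{lem:psi.pm}(2), so it contributes $\le c|\psi^{-1}(\tfrac12)-\tfrac12|$ (note, not its square, because that interval also has length comparable to it up to constants, or one simply bounds crudely).

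For the main ``bulk'' part $u\in[2\delta,\tfrac12]$ I would write $F^{-1}(\psi(u))-F^{-1}(u)=\int_u^{\psi(u)}(F^{-1})'$, change variables, and use Hölder with the exponent $r=1+\tfrac{q}{2q-4}$ (whose conjugate is tied to the $1/q$ tail exponent — this is why $r$ is defined that way). Concretely, bound
\[
\Big(\int_{2\delta}^{1/2}\big(F^{-1}(\psi(u))-F^{-1}(u)\big)^2\,du\Big)^{1/2}
\le \Big(\int_{2\delta}^{1/2}(\psi(u)-u)\big((F^{-1})'(\xi_u)\big)^2\,du\Big)^{1/2}
\]
by Cauchy–Schwarz on the inner integral, then split the supremum/average of $(F^{-1})'$ against the weight $\psi(u)-u\le 2\sqrt{\Delta u}\log(e/u)$; the pointwise envelope $|F^{-1}(v)|\le Lv^{-1/q}$ controls $(F^{-1})'$ only \emph{on average} (via monotonicity: $\int (F^{-1})'=F^{-1}(b)-F^{-1}(a)$), so one must pass to an averaged/layer-cake estimate rather than a pointwise one. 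After this I expect terms of the shape $\sqrt\Delta\int u^{1/2-1/q-1}\log(e/u)\,du$ which converge at $u=0$ precisely when $q>4$ (giving the clean $\sqrt\Delta$), and produce the extra $\log\frac1\Delta$ when $q=4$. The remaining $\big(\int|\psi'-1|^r\big)^{1/r}$ term (resp.\ $\sqrt{\log\frac1\Delta}\,\|\psi'-1\|_2$ for $q=4$) enters when, instead of bounding the displacement $\psi(u)-u$ directly, one uses $F^{-1}(\psi(u))-F^{-1}(u)=\int_u^{\psi(u)}(F^{-1})'$ and integrates by parts / changes variables to transfer a derivative onto $\psi$, so that $\psi'-1$ appears paired against $F^{-1}$ itself (which is in $L_{2r'}$ near the endpoints exactly because of the $1/q$ tail with $q\ge4$) — Hölder with exponents $(r,r')$ then produces $\|\psi'-1\|_r\cdot\|F^{-1}\|_{L_{\text{something}}}$, and checking that $\|F^{-1}\|$ in that norm is an absolute constant (depending on $L,q$) is a finite computation using \eqref{eq:deterministic.tail.decay}.

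The main obstacle I anticipate is the bulk estimate on $[2\delta,\tfrac12]$: getting a bound that uses \emph{only} the one-sided tail decay \eqref{eq:deterministic.tail.decay} and the ``soft'' facts about $\psi$ from Lemma~\ref{lem:psi.pm}, without any regularity (lower bound on density, continuity of $F^{-1}$) — this is where an $L_\infty$ bound on $F^{-1}(\psi(u))-F^{-1}(u)$ genuinely fails and one must extract cancellation of a global nature. Concretely the delicate point is the interchange of the $u$-integral with the inner $v=(F^{-1}$-image$)$ integral and the correct choice of how to split $[2\delta,\tfrac12]$ into dyadic blocks $[2^{-k-1},2^{-k}]$: on each block $\psi(u)-u\le c\sqrt{\Delta\,2^{-k}}\,k$ and the total $F^{-1}$-variation is $\le L\,2^{k/q}$, and one must sum $\sum_k \sqrt{\Delta 2^{-k}}\,k\cdot 2^{2k/q}$-type series; convergence of this (at $q>4$) versus its logarithmic divergence (at $q=4$) is the crux, and it is where the precise exponent $r=1+\tfrac{q}{2q-4}$ must be verified to make Hölder close. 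I would also need to double-check the endpoint bookkeeping so that the three named terms on the RHS genuinely absorb every piece, in particular that the re-centering contributes $|\psi^{-1}(\tfrac12)-\tfrac12|$ to the first power and not its square.
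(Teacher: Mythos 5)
The central step in your proposal --- estimating $F^{-1}(\psi(u))-F^{-1}(u)=\int_u^{\psi(u)}(F^{-1})'(v)\,dv$ and then applying Cauchy--Schwarz (or a mean-value evaluation $(F^{-1})'(\xi_u)$) to get
\[
\Big(\int_{2\delta}^{1/2}\big(F^{-1}(\psi(u))-F^{-1}(u)\big)^2\,du\Big)^{1/2}
\le \Big(\int_{2\delta}^{1/2}(\psi(u)-u)\big((F^{-1})'(\xi_u)\big)^2\,du\Big)^{1/2}
\]
--- does not close. You correctly flag that the envelope $|F^{-1}(v)|\le L\gamma(v)^{-1/q}$ controls $(F^{-1})'$ only in $L_1$ (it is a monotone function's derivative, nothing more), but your decomposition is built around a bound that requires $(F^{-1})'$ pointwise or in $L_2$. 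For a general isotropic $X$ satisfying only $L_q$--$L_2$ norm equivalence, the density of $\mu^\theta$ can vanish or be arbitrarily small on sets of positive measure, so $(F^{-1})'$ has no $L_p$ bound for any $p>1$. Your dyadic refinement runs into the same wall: the per-block variation bound $|F^{-1}(2^{-k})-F^{-1}(2^{-k-1})|\le L2^{k/q}$ could be concentrated at a single point in each block, and summing the worst cases block-by-block overshoots the target (already at $q=4$ it produces $\sqrt\Delta\log^2(1/\Delta)$ rather than $\sqrt\Delta\log(1/\Delta)$). One would need a genuinely global argument to rule out simultaneous near-worst-case behaviour across dyadic scales, and you flag that you do not have one. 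So the ``main obstacle'' you anticipate is a real gap, not a technicality.

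The paper sidesteps $(F^{-1})'$ entirely and the trick is algebraic rather than analytic: expand
\[
\int_\delta^{1-\delta}\!\big(F^{-1}(u)-F^{-1}(\psi(u))\big)^2\,du
=\int_\delta^{1-\delta}\!F^{-1}(u)^2\,du+\int_\delta^{1-\delta}\!F^{-1}(\psi(u))^2\,du
-2\int_\delta^{1-\delta}\!F^{-1}(u)F^{-1}(\psi(u))\,du,
\]
and show the three integrals are pairwise close. For the second versus the first (Lemma~\ref{lem:modulus.quadratic}), a change of variables $v=\psi(u)$ transfers the derivative onto $\psi$ (the Jacobian), giving $\int F^{-1}(\psi(u))^2(\psi'(u)-1)\,du$ plus tail terms at the endpoints; H\"older with exponent $r$ then produces $\|\psi'-1\|_{L_r}\cdot\|F^{-1}(\psi(\cdot))\|_{L_{2r'}}^2$, and the choice of $r=1+\tfrac{q}{2q-4}$ makes the latter a finite constant precisely because $2r'/q<1$. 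For the cross term versus the first (Lemma~\ref{lem:modulus.cross}), the key is a \emph{sign/sandwich} argument you do not have in your proposal: on $[\delta,\psi^{-1}(1/2)]$ and on $[1/2,1-\delta]$ the two factors $F^{-1}(u)$ and $F^{-1}(\psi(u))$ have the same sign and, by monotonicity of $F^{-1}$, their product is sandwiched between $F^{-1}(u)^2$ and $F^{-1}(\psi(u))^2$; the leftover middle stretch $[\psi^{-1}(1/2),1/2]$ is handled crudely because there $|F^{-1}|\le c_1L$, which is exactly where $|\psi^{-1}(1/2)-1/2|$ enters to the first power. You correctly identify the role of that re-centering term and of $r$, and your endpoint estimate on $[\delta,2\delta]$ is fine --- but without the sandwich mechanism for the cross term (or something equivalent encoding the global cancellation), the bulk estimate does not follow from tail decay alone.
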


Proposition \ref{prop:modulus.pertubation} replaces the key part of the argument that was presented in Section \ref{sec:wasserstein.proof.log.concave.highlight} and which was based on the assumption that $F^{-1}$ has a well behaved modulus of continuity.
That assumption allowed one to control $|F^{-1}(u)-F^{-1}(\psi(u))| $ in terms of $|u-\psi(u)|$, but unfortunately, it is useless in the general case. 
Instead, one may write
\begin{align*}
&\int_{\delta}^{1-\delta}\left( F^{-1}(u)-F^{-1}(\psi(u))\right)^2 \,du\\
&=\int_{\delta}^{1-\delta}  F^{-1}(u)^2 \,du + \int_{\delta}^{1-\delta} F^{-1}(\psi(u))^2 \, du -2\int_{\delta}^{1-\delta} F^{-1}(u)F^{-1}(\psi(u))\,du,
\end{align*}
and show that all three integrals are close to each other. The details of the proof are presented in Lemma \ref{lem:modulus.quadratic} and Lemma \ref{lem:modulus.cross}, but intuitively the reason that the three integrals are close is that $\psi$ is a perturbation of the identity. As a result, the second integral should be close to the first one. Moreover, if $u$ is sufficiently far away from the point of symmetry ($F(0)=\frac{1}{2}$), the terms $F^{-1}(u)$ and $F^{-1}(\psi(u))$ have the same sign; thus, by monotonicity, their product is sandwiched between $F^{-1}(u)^2$ and $F^{-1}(\psi(u))^2$.

\vspace{0.5em}
The formal proof is also based on the tail-estimate of $F$, namely that
\begin{align}
\label{eq:deterministic.tail.decay.inverse}
 \left| F^{-1}(u) \right|
 \leq \frac{L}{  \gamma(u)^{1/q}}
 \quad\text{for } u\in(0,1),
\end{align}
which is an immediate consequence of \eqref{eq:deterministic.tail.decay}.

\begin{Lemma}
\label{lem:modulus.quadratic}
	 There is a constant $c=c(\kappa,q,L)$  such that
	\begin{align*}
	&\left| \int_{\delta}^{1-\delta} F^{-1}(\psi(u))^2 \, du -  \int_{\delta}^{1-\delta} F^{-1}(u)^2\,du \right| \\
	&\leq
	c
	\begin{cases}
	\sqrt \Delta	 + \left(\int_{\delta}^{1-\delta} |\psi'(u)-1|^r \,du\right)^{1/r}  &\text{if } q>4,
	\\
	\\
	\sqrt\Delta \log \frac{1}{\Delta} + \sqrt{\log \frac{1}{\Delta}} \left(\int_{\delta}^{1-\delta} |\psi'(u)-1|^2 \,du\right)^{1/2}
	&\text{if } q=4,
	\end{cases}
	\end{align*}
where, as always, $r=1+\frac{q}{2q-4}$.
\end{Lemma}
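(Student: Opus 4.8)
The plan is to reduce the comparison of $\int_{\delta}^{1-\delta} F^{-1}(\psi(u))^2\,du$ with $\int_{\delta}^{1-\delta} F^{-1}(u)^2\,du$ to a one-dimensional change-of-variables estimate, using the substitution $v=\psi(u)$. Since $\psi$ is absolutely continuous and strictly increasing, writing $w=\psi^{-1}$ gives
\[
\int_{\delta}^{1-\delta} F^{-1}(\psi(u))^2\,du = \int_{\psi(\delta)}^{\psi(1-\delta)} F^{-1}(v)^2\,w'(v)\,dv,
\]
so the difference of the two integrals splits into (a) a boundary discrepancy, coming from the fact that the intervals $[\delta,1-\delta]$ and $[\psi(\delta),\psi(1-\delta)]$ differ, and (b) the ``bulk'' term $\int F^{-1}(v)^2(w'(v)-1)\,dv$. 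For (a), I would use that $|\psi-\mathrm{id}|\le\frac12\gamma(\cdot)$ forces $\psi(\delta)$ and $\psi(1-\delta)$ to lie within $[\tfrac{\delta}{2},2\delta]$ and $[1-2\delta,1-\tfrac{\delta}{2}]$ respectively, together with the tail bound \eqref{eq:deterministic.tail.decay.inverse}, $|F^{-1}(v)|\le L\gamma(v)^{-1/q}$; the contribution is then $\lesssim \int_0^{O(\delta)} v^{-2/q}\,dv \sim \delta^{1-2/q}$, and since $\delta=\kappa\Delta\log^2(e/\Delta)$ this is $\lesssim\sqrt\Delta$ when $q>4$ (and $\lesssim\sqrt\Delta\log\frac1\Delta$ when $q=4$, where $1-2/q=1/2$ and the logarithmic factor in $\delta$ survives).

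For the bulk term (b), the point is to bound $\left|\int_{\delta}^{1-\delta} F^{-1}(u)^2\,(\psi'(u)-1)\,du\right|$ (after switching back to the $u$-variable, noting $w'(\psi(u))=1/\psi'(u)$, so $w'-1$ and $\psi'-1$ differ by a harmless factor once $|\psi'-1|$ is small, which it is on $[\delta,1-\delta]$ by Lemma~\ref{lem:psi.pm}(3) and the hypothesis). I would apply H\"older's inequality with exponents $(r',r)$ where $r=1+\frac{q}{2q-4}$: then
\[
\left|\int_{\delta}^{1-\delta} F^{-1}(u)^2\,(\psi'(u)-1)\,du\right|
\le \left(\int_{\delta}^{1-\delta} |F^{-1}(u)|^{2r'}\,du\right)^{1/r'}\left(\int_{\delta}^{1-\delta} |\psi'(u)-1|^{r}\,du\right)^{1/r}.
\]
The conjugate exponent is $r'=\frac{r}{r-1}=\frac{2q-4+q}{q}\cdot\frac{1}{?}$ — the precise arithmetic is chosen exactly so that $2r'=\frac{q}{?}<q$, i.e. $2r'$ stays strictly below $q$, which makes $\int_0^1 \gamma(u)^{-2r'/q}\,du$ converge (this is why the exponent $r$ has that specific form); hence $\left(\int |F^{-1}|^{2r'}\right)^{1/r'}$ is a finite constant $c(q,L)$, and the first factor is absorbed into the constant. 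This yields the $\left(\int|\psi'-1|^r\right)^{1/r}$ term in the claim for $q>4$. When $q=4$ one has $r=2$ exactly, the exponent $2r'=4=q$ is borderline and $\int_\delta^{1-\delta}\gamma(u)^{-1}\,du\sim\log\frac1\Delta$ diverges logarithmically, which is precisely the source of the extra $\sqrt{\log\frac1\Delta}$ factor multiplying $(\int|\psi'-1|^2)^{1/2}$.

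The remaining ingredient is the term $|\psi^{-1}(\tfrac12)-\tfrac12|$: this does not actually appear in Lemma~\ref{lem:modulus.quadratic} itself (it shows up later in Proposition~\ref{prop:modulus.pertubation}, presumably from the cross-term analysis in Lemma~\ref{lem:modulus.cross}), so here I only need the quadratic comparison. I expect the main obstacle to be a clean bookkeeping of the boundary term: one must be careful that $\psi(\delta)$ could be either smaller or larger than $\delta$ (the hypothesis only says $\psi\ge\mathrm{id}$ \emph{or} $\psi\le\mathrm{id}$ globally), so the ``extra/missing'' sliver of the interval must be estimated in both directions, and one must check that in all cases the sliver is contained in a region where $\gamma(v)\le O(\delta)$ so that the tail bound applies with room to spare. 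Once that is handled, combining the boundary estimate and the H\"older estimate for the bulk, and splitting into the cases $q>4$ and $q=4$, gives the two-case bound exactly as stated.
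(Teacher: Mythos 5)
Your proposal is correct and follows essentially the same route as the paper: a change of variables $v=\psi(u)$ to isolate a boundary term over the sliver between $[\delta,1-\delta]$ and $[\psi(\delta),\psi(1-\delta)]$ (handled by the tail bound $|F^{-1}(v)|\le L\gamma(v)^{-1/q}$), plus a bulk term $\int F^{-1}(\psi(u))^2(\psi'(u)-1)\,du$ handled by H\"older with exponent $r$, the choice of $r$ ensuring $2r'/q<1$ for $q>4$ and $2r'=q$ at $q=4$ (producing the logarithmic factor). The only cosmetic deviations — writing $F^{-1}(u)^2$ rather than $F^{-1}(\psi(u))^2$ in the bulk integrand, and leaving the $r'$ arithmetic unfinished — do not affect the argument, since $F^{-1}(\psi(u))$ obeys the same $\gamma(u)^{-1/q}$ bound up to a constant because $|\psi-\mathrm{id}|\le\tfrac12\gamma$.
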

\begin{proof}
	We only present the case $\psi\geq \mathrm{id}$ and $q>4$. The case $\psi\leq\mathrm{id}$ follows from an identical argument to the one presented here, while the case $q=4$ requires only simple modifications.
	
	By a change of variables $u\leftrightarrow \psi(u)$,
	\begin{align*}
	&\int_{\delta}^{1-\delta} F^{-1}(\psi(u))^2 \, du \\
	&=\int_{\delta}^{1-\delta} F^{-1}(\psi(u))^2\psi'(u)\, du - \int_{\delta}^{1-\delta} F^{-1}(\psi(u))^2(\psi'(u)-1) \, du \\
	&=\int_{\psi(\delta)}^{\psi(1-\delta)} F^{-1}(v)^2\, dv - \int_{\delta}^{1-\delta} F^{-1}(\psi(u))^2(\psi'(u)-1) \, du
	= A+B.
	\end{align*}
	
	First, observe that  $A$ is close to  $\int_{\delta}^{1-\delta} F^{-1}(u)^2\,du$.
	Indeed, since $ |\psi -\mathrm{id}|\leq \frac{1}{2}\gamma$ we have that
	\[ \psi(\delta)\in\left[\delta, 2\delta\right]
	\quad\text{and}\quad
	\psi(1-\delta)\in \left[1-\delta,1-\tfrac{1}{2}\delta \right].\]
	Hence, there is a constant  $c_1=c_1(\kappa,q,L)$ such that
	\begin{align*}
	\left| A - \int_{\delta}^{1-\delta} F^{-1}(v)^2\, dv  \right|
	&\leq \left( \int_{\delta }^{2\delta} +\int_{1-\delta}^{1-\delta/2} \right) F^{-1}(v)^2\,dv
	\leq c_1 \sqrt\Delta
	\end{align*}
	where the last inequality  follows from  the tail-estimate \eqref{eq:deterministic.tail.decay.inverse}, using that $q>4$.
	
	Second, to estimate $B$,  apply  H\"older's inequality (with exponent $r$) 
	\begin{align*}
	B&\leq \left( \int_{\delta}^{1-\delta} |F^{-1}(\psi(u))|^{2r'} \, du\right)^{1/r'} \left( \int_{\delta}^{1-\delta} |\psi'(u)-1|^{r} \, du\right)^{1/r}
	= B_1\cdot B_2.
	\end{align*}
	To complete the proof it suffices to show that  $B_1\leq c_1(q,L)$.
	To that end, note that  $|\psi-\rm id|\leq \frac{1}{2}\gamma$.
	Thus, by the  tail-estimate \eqref{eq:deterministic.tail.decay.inverse} there is an absolute constant $c_2$ such that for any  $u\in[\delta,1-\delta]$,
\[ 
\left|F^{-1}(\psi(u))\right| \leq \frac{ c_2 L }{ \gamma(u)^{1/q}  } .
\]
	Finally, with the choice of $r$  we have that $2r'/q<1$; hence $\gamma(\cdot)^{-2r'/q}$ is integrable in $(0,1)$ and $B_1\leq c_1(q,L)$.
\end{proof}

\begin{Lemma}
\label{lem:modulus.cross}
	There is a  constant $c=c(\kappa,q,L)$ such that
	\begin{align*}
	&\left| \int_{\delta }^{1-\delta } F^{-1}(u)F^{-1}(\psi(u))\,du -  \int_{\delta}^{1-\delta} F^{-1}(u)^2 \,du \right| \\
	&\leq
	c\begin{cases}
	\sqrt \Delta + \left| \psi^{-1}(\tfrac{1}{2}) - \tfrac{1}{2} \right| +  \left(\int_{\delta }^{1-\delta } |\psi'(u)-1|^r \,du\right)^{1/r}  &\text{if } q>4, \\
	\\
	\sqrt \Delta \log \frac{1}{\Delta}+ \left| \psi^{-1}(\tfrac{1}{2}) - \tfrac{1}{2} \right| +   \sqrt{\log\frac{1}{\Delta}} \left(\int_{\delta}^{1-\delta} |\psi'(u)-1|^2 \,du\right)^{1/2}
	&\text{if } q=4.
	\end{cases}
	\end{align*}
\end{Lemma}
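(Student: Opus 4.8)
We may assume, exactly as in the proof of Lemma~\ref{lem:modulus.quadratic}, that $\psi\geq\mathrm{id}$ (the case $\psi\leq\mathrm{id}$ follows by the reflection $u\mapsto 1-u$) and that $q>4$ (the case $q=4$ requires only the usual logarithmic adjustments: replace the H\"older exponent $r$ by $2$ and absorb the borderline–integrable tails, which produces the factors $\sqrt{\log\frac1\Delta}$ and $\log\frac1\Delta$). Since $F$ is symmetric we have $F^{-1}(\tfrac12)=0$, with $F^{-1}\leq 0$ on $(0,\tfrac12]$ and $F^{-1}\geq 0$ on $[\tfrac12,1)$. The plan is to split $[\delta,1-\delta]$ into a \emph{good set} $G$, on which $u$ and $\psi(u)$ lie on the same side of the symmetry point $\tfrac12$, and a \emph{bad set} $B$, a short interval straddling $\tfrac12$, and to estimate the two contributions to $\int_\delta^{1-\delta}\bigl|F^{-1}(u)F^{-1}(\psi(u))-F^{-1}(u)^2\bigr|\,du$ separately. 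Because $\psi\geq\mathrm{id}$ and $|\psi-\mathrm{id}|\leq\tfrac12\gamma$, one checks that $\psi^{-1}(\tfrac12)\in[\tfrac14,\tfrac12]$, that $B=[\psi^{-1}(\tfrac12),\tfrac12]$ (of length $|\psi^{-1}(\tfrac12)-\tfrac12|$), and that $\psi(B)\subset[\tfrac12,\tfrac34]$.

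On $B$ we simply note that on $B$ and on $\psi(B)$ one has $\gamma\geq\tfrac14$, hence $|F^{-1}|\leq 2L$ there by the tail estimate \eqref{eq:deterministic.tail.decay.inverse}; therefore the integrand is at most $8L^2$ on $B$ and
$\int_B\bigl|F^{-1}(u)F^{-1}(\psi(u))-F^{-1}(u)^2\bigr|\,du\leq 8L^2\,|\psi^{-1}(\tfrac12)-\tfrac12|$,
which accounts for the second term on the right-hand side of the statement.

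On $G=[\delta,1-\delta]\setminus B$ the numbers $a:=F^{-1}(u)$ and $b:=F^{-1}(\psi(u))$ have the same sign and, by monotonicity of $F^{-1}$, satisfy $|a|\leq|b|$; a one-line computation then shows that $ab$ lies between $a^2$ and $b^2$, so pointwise on $G$
\[
\bigl|F^{-1}(u)F^{-1}(\psi(u))-F^{-1}(u)^2\bigr|\leq \bigl|F^{-1}(\psi(u))^2-F^{-1}(u)^2\bigr|.
\]
It thus remains to bound $\int_G\bigl|F^{-1}(\psi(u))^2-F^{-1}(u)^2\bigr|\,du$, an ``absolute-value-inside'' version of Lemma~\ref{lem:modulus.quadratic}. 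Splitting $G$ at $\tfrac12$ into $G_-=[\delta,\psi^{-1}(\tfrac12)]$ and $G_+=[\tfrac12,1-\delta]$, the quantity $F^{-1}(\psi(u))^2-F^{-1}(u)^2$ has constant sign on each piece (negative on $G_-$, positive on $G_+$), so the absolute value can be removed there and the integral evaluated by the change of variables $v=\psi(u)$ exactly as in Lemma~\ref{lem:modulus.quadratic}; e.g. on $G_-$,
\[
\int_{G_-}F^{-1}(\psi(u))^2\,du=\int_{\psi(\delta)}^{1/2}F^{-1}(v)^2\,dv-\int_{G_-}F^{-1}(\psi(u))^2\bigl(\psi'(u)-1\bigr)\,du,
\]
and symmetrically on $G_+$. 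The mismatch between the intervals $[\delta,\psi^{-1}(\tfrac12)]$ and $[\psi(\delta),\tfrac12]$ contributes at most $\int_\delta^{\psi(\delta)}F^{-1}(u)^2\,du+\int_{\psi^{-1}(1/2)}^{1/2}F^{-1}(u)^2\,du$, which is $\leq c(\kappa,q,L)\bigl(\sqrt\Delta+|\psi^{-1}(\tfrac12)-\tfrac12|\bigr)$ — the first summand by \eqref{eq:deterministic.tail.decay.inverse} together with $q>4$ (so that $u^{-2/q}$ is integrable near $0$ with the right power of $\delta$), the second since $\gamma\geq\tfrac14$ on that last interval. The remaining term is handled by H\"older's inequality with exponent $r$, using $|F^{-1}(\psi(u))|\leq c\,L\,\gamma(u)^{-1/q}$, whose $2r'$-th power is integrable precisely because $2r'/q<1$; this yields a contribution $\leq c(q,L)\bigl(\int_\delta^{1-\delta}|\psi'(u)-1|^r\,du\bigr)^{1/r}$. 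Summing the estimates over $B$, $G_-$ and $G_+$ gives the claim.

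\textbf{Main obstacle.} The delicate point is obtaining the term $|\psi^{-1}(\tfrac12)-\tfrac12|$ \emph{sharply}: it must be traced exactly to the short bad interval around the symmetry point and to the corresponding boundary mismatch there, and one must avoid any cruder bound that would replace it by a $\delta$-sized (hence logarithmically lossy) error. Everything else is routine bookkeeping that parallels the proof of Lemma~\ref{lem:modulus.quadratic}.
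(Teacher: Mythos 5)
Your proof is correct and follows essentially the same route as the paper's: split at $\psi^{-1}(\tfrac12)$ and $\tfrac12$, bound the middle interval by $|\psi^{-1}(\tfrac12)-\tfrac12|$ using the tail estimate, and on the two outer pieces use the sign structure from monotonicity to reduce to the quadratic integrals, which are then handled by the change-of-variables plus H\"older argument of Lemma~\ref{lem:modulus.quadratic}. One small slip: on $G_-$ you actually have $|F^{-1}(u)|\geq|F^{-1}(\psi(u))|$ (not $\leq$), but this is harmless since the key inequality $|ab-a^2|\leq|b^2-a^2|$ only needs $a,b$ to share a sign and hence $ab$ to lie between $a^2$ and $b^2$.
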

\begin{proof}
Once again, we only present the proof in the case  $\psi\geq \rm id$ and $q>4$.
	The other cases follow a similar path and are omitted.

	If $\psi\geq \rm id$ then $\psi^{-1}(\frac{1}{2})\leq \frac{1}{2}$, and since $|\psi-\mathrm{id}|\leq \frac{1}{2}\rm{id}$ then $\psi^{-1}(\frac{1}{2})\geq \frac{1}{4}\geq \delta$.
	Now set
	\begin{align*}
	&\int_{\delta }^{1-\delta } F^{-1}(u)F^{-1}(\psi(u))\,du  \\
	&=\left( \int_{\delta }^{\psi^{-1}(1/2)} + \int_{\psi^{-1}(1/2)}^{1/2} + \int_{1/2}^{1-\delta } \right) F^{-1}(u)F^{-1}(\psi(u))\,du   \\
	&=A+B+D.
	\end{align*}
	
	For $u$ in the range of integration of $B$, we have that $u,\psi(u)\in [\frac{1}{4},\frac{3}{4}]$. 
	By the tail-estimate \eqref{eq:deterministic.tail.decay.inverse}, there is an absolute constant $c_1$ such that for $v\in[\frac{1}{4},\frac{3}{4}]$,  $|F^{-1}(v)|\leq c_1 L$; thus 	$B	\leq (c_1L)^2 |  \psi^{-1}(\tfrac{1}{2}) - \tfrac{1}{2} | $.
	
	Turning to  $A$, let us show that
	\begin{align}
	\label{eq:estimate.A.deterministic}
	\begin{split}
	\left| A- \int_{\delta }^{1/2}  F^{-1}(u)^2\,du \right|
	&\leq c_2\left( \sqrt \Delta  +  \left| \psi^{-1}\left(\tfrac{1}{2}\right) - \tfrac{1}{2}  \right|  + \left(\int_{\delta}^{1-\delta} |\psi'(u)-1|^r \,du\right)^{1/r} \right)
	\end{split}
	\end{align}
	for a constant $c_2=c_2(\kappa,q,L)$.
	
Note that $F(\frac{1}{2})=0$ since the underlying random variable is symmetric.  
	Thus, by the  monotonicity of $F^{-1}$ and recalling that $\psi\geq \mathrm{id}$,
	\[ F^{-1}(u)\leq F^{-1}(\psi(u))\leq 0
	\quad\text{for } u\in\left[\delta,\psi^{-1}(\tfrac{1}{2}) \right].\]
	Setting
	\begin{align*}
	A_-
	&=\int_{\delta }^{\psi^{-1}(1/2)} F^{-1}(\psi(u))^2\,du	\quad\text{and}\\
	A_+
	&= \int_{\delta }^{\psi^{-1}(1/2)} F^{-1}(u)^2\,du,
	\end{align*}
	it is evident that $A\in[A_-,A_+]$. Therefore, it suffices to show that $A_-$ and $A_+$ are both sufficiently close to $\int_{\delta }^{1/2} F^{-1}(u)^2\,du$.
	
	Using once again that $|F^{-1}(u)| \leq c_1L$ for $u\in[\psi^{-1}(\frac{1}{2}),\frac{1}{2}]$, it is evident that
	\[ \left| A_+ - \int_{\delta }^{1/2}  F^{-1}(u)^2\,du \right|
	\leq (c_1L)^2 \left| \psi^{-1}\left(\tfrac{1}{2}\right) - \tfrac{1}{2}  \right| .\]
	As for $A_-$, one may follow the same argument as used in the proof of Lemma \ref{lem:modulus.quadratic}---a change of variables, H\"older's inequality, and invoking \eqref{eq:deterministic.tail.decay.inverse}---to show that
	\begin{align*}
	&\left| A_- -   \int_{\delta }^{1/2}  F^{-1}(u)^2\,du  \right|
	\leq c_4 \left( \sqrt \Delta	 + \left(\int_{\delta }^{1-\delta} |\psi'(u)-1|^r \,du\right)^{1/r} \right)
	\end{align*}
	for a constant $c_4=c_4(\kappa,q,L)$.
	This proves \eqref{eq:estimate.A.deterministic}

	Finally, an  identical argument can be used to show that
	\begin{align*}
	\left| D- \int_{1/2}^{1-\delta }  F^{-1}(u)^2\,du \right|
	&\leq c_5\left(  \sqrt \Delta + \left| \psi^{-1}\left(\tfrac{1}{2}\right) - \tfrac{1}{2}  \right|  + \left(\int_{\delta}^{1-\delta } |\psi'(u)-1|^r \,du\right)^{1/r} \right)
	\end{align*}
	for a constant $c_5=c_5(\kappa,q,L)$, completing the proof.
\end{proof}

\subsection{The deterministic estimate}
\label{sec:det.error.explicit}
Recall that
\[ \psi_\pm(u) = u  \pm 2\sqrt{ \Delta \gamma(u) } \log\left( \frac{e}{ \gamma(u) }  \right). \]
With Corollary \ref{cor:wasserstein.smaller.deterministic} in mind, the following estimate is crucial:

\begin{Lemma}
\label{lem:det.integrals.pertubations}
	There exists an absolute constant $\kappa$ and a constant $c(\kappa,q,L)$ such that
	\[
	\int_{\delta}^{1-\delta} \left( F_{\mu^\theta}^{-1}\left( \psi_-(u)  \right) - F_{\mu^\theta}^{-1}\left( \psi_+(u) \right) \right)^2 \,du
	\leq c
	\begin{cases}
		\sqrt{\Delta} &\text{if } q>4,\\
		\\
	 \sqrt{\Delta} \log^2 \frac{1}{\Delta}
	 &\text{if } q=4.
	\end{cases}\]
\end{Lemma}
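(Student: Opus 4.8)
The plan is to apply Proposition~\ref{prop:modulus.pertubation} twice—once with $\psi=\psi_+$ and once with $\psi=\psi_-$—and then combine the two estimates via the triangle inequality in $L_2([\delta,1-\delta])$. Indeed, writing $g_\pm(u)=F_{\mu^\theta}^{-1}(\psi_\pm(u))$ and $g(u)=F_{\mu^\theta}^{-1}(u)$, one has
\[
\left(\int_{\delta}^{1-\delta}(g_+-g_-)^2\,du\right)^{1/2}
\leq \left(\int_{\delta}^{1-\delta}(g_+-g)^2\,du\right)^{1/2}
+\left(\int_{\delta}^{1-\delta}(g-g_-)^2\,du\right)^{1/2},
\]
so it suffices to bound each of the two summands by the claimed right-hand side (with $\sqrt{\Delta}$ replaced by its square root inside, i.e.\ controlling the \emph{squared} integral by $c\sqrt\Delta$ resp.\ $c\sqrt\Delta\log^2\frac1\Delta$). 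To invoke Proposition~\ref{prop:modulus.pertubation} I first need to check its hypotheses for $\psi_\pm$: absolute continuity, strict monotonicity on $[\delta,1-\delta]$, the bound $|\psi_\pm-\mathrm{id}|\le\frac12\gamma$, and the one-sided property $\psi_+\ge\mathrm{id}$, $\psi_-\le\mathrm{id}$. All of these are exactly what Lemma~\ref{lem:psi.pm} delivers (with $\kappa$ the absolute constant fixed there), except that the Proposition is stated for a function $\psi\colon[\delta,1-\delta]\to[0,1]$; part~(1) of Lemma~\ref{lem:psi.pm} gives $\psi_\pm(u)\in[\Delta,1-\Delta]\subset[0,1]$, so this is fine.

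Next I would estimate the three quantities that appear on the right-hand side of Proposition~\ref{prop:modulus.pertubation} for $\psi=\psi_\pm$. The first, $\sqrt\Delta$ (or $\sqrt\Delta\log\frac1\Delta$ when $q=4$), is already of the required form. For the second, $|\psi_\pm^{-1}(\tfrac12)-\tfrac12|$: since $\gamma(\tfrac12)=\tfrac12$, the perturbation size at $u=\tfrac12$ is $2\sqrt{\Delta/2}\,\log(2e)\lesssim\sqrt\Delta$, and because $\psi_\pm$ has derivative bounded below by $\tfrac12$ on $[\delta,1-\delta]$ (Lemma~\ref{lem:psi.pm}(3)), the inverse is $2$-Lipschitz there, giving $|\psi_\pm^{-1}(\tfrac12)-\tfrac12|\le 2|\psi_\pm(\tfrac12)-\tfrac12|\lesssim\sqrt\Delta$. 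For the third term I must bound $\left(\int_\delta^{1-\delta}|\psi_\pm'(u)-1|^r\,du\right)^{1/r}$ with $r=1+\frac{q}{2q-4}$ (and the $L_2$ version with the extra $\sqrt{\log\frac1\Delta}$ factor when $q=4$). By Lemma~\ref{lem:psi.pm}(3), $|\psi_\pm'(u)-1|\le 3\sqrt{\Delta/\gamma(u)}\,\log\frac{e}{\gamma(u)}$, so
\[
\int_\delta^{1-\delta}|\psi_\pm'(u)-1|^r\,du
\le 3^r\Delta^{r/2}\int_\delta^{1-\delta}\gamma(u)^{-r/2}\log^r\!\Big(\frac{e}{\gamma(u)}\Big)\,du
\le C(q)\,\Delta^{r/2}\int_\delta^{1/2}u^{-r/2}\log^r\!\Big(\frac{e}{u}\Big)\,du,
\]
using symmetry of $\gamma$. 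Here the exponent $r$ is chosen precisely so that $r/2<1$, hence $u^{-r/2}$ is integrable near $0$ and the integral over $[\delta,1/2]$ is $O(1)$ (the logarithmic weight does not change integrability); this gives the bound $C(q)\Delta^{r/2}$, and raising to the power $1/r$ yields $C(q)^{1/r}\sqrt\Delta$, which is $\le c\sqrt\Delta$. When $q=4$ one has $r=1+\frac44=2$ at the boundary, so $u^{-r/2}=u^{-1}$ is \emph{not} integrable and the integral over $[\delta,1/2]$ contributes a factor $\sim\log\frac1\delta\sim\log\frac1\Delta$ (and with the log-weight, $\log^2\frac1\Delta$ after accounting for the $\log^r$ in the integrand—one should track this carefully); combined with the prefactor $\sqrt{\log\frac1\Delta}$ from the Proposition this produces the $\sqrt\Delta\log^2\frac1\Delta$ in the statement. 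Squaring the resulting $L_2$ bound and summing the two contributions (from $\psi_+$ and $\psi_-$) via the triangle inequality above gives the claim.

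The main obstacle I anticipate is the bookkeeping in the $q=4$ borderline case: there, $r/2=1$, the integral $\int_\delta^{1/2}u^{-1}\log^2(e/u)\,du$ is genuinely divergent as $\delta\to0$ and behaves like $\log^3\frac1\delta$, so one must be careful about exactly which power of $\log\frac1\Delta$ comes out and check that, together with the $\sqrt{\log\frac1\Delta}$ prefactor in Proposition~\ref{prop:modulus.pertubation} and the $\sqrt\Delta\log\frac1\Delta$ first term, the total is still dominated by $\sqrt\Delta\log^2\frac1\Delta$—i.e.\ one has to verify the log-powers balance rather than accumulate. (In fact one expects the dominant contribution to be $\sqrt{\log\frac1\Delta}\cdot\big(\Delta\log^3\frac1\Delta\big)^{1/2}=\sqrt\Delta\log^2\frac1\Delta$, matching the claim.) A secondary, purely cosmetic point is that $F=F_{\mu^\theta}$ is the distribution function of a symmetric random variable with $F(0)=\tfrac12$, which is the normalization under which Proposition~\ref{prop:modulus.pertubation} and the lemmas of Section~\ref{sec:wasserstein.deterministic} were proved; the general (non-symmetric) case is handled by the reduction already noted in the text, so no extra work is needed here. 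Everything else—checking hypotheses, the two-Lipschitz inverse bound, the integrability threshold for $u^{-r/2}$—is routine and uses only Lemma~\ref{lem:psi.pm}, Proposition~\ref{prop:modulus.pertubation} and the tail estimate \eqref{eq:deterministic.tail.decay.inverse}.
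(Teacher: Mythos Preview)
Your proposal is correct and follows essentially the same route as the paper: verify the hypotheses of Proposition~\ref{prop:modulus.pertubation} for $\psi_\pm$ via Lemma~\ref{lem:psi.pm}, bound $|\psi_\pm^{-1}(\tfrac12)-\tfrac12|\lesssim\sqrt\Delta$ and $\big(\int_\delta^{1-\delta}|\psi_\pm'-1|^r\big)^{1/r}\lesssim\sqrt\Delta$ (with the extra $\log^{3/2}\tfrac1\Delta$ when $q=4$), and finish with the $L_2$ triangle inequality. Your bookkeeping for the $q=4$ log powers is correct---the dominant term is indeed $\sqrt{\log\tfrac1\Delta}\cdot(\Delta\log^3\tfrac1\Delta)^{1/2}=\sqrt\Delta\log^2\tfrac1\Delta$---and matches the paper's computation.
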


Again, we shall assume for the sake of simplicity that each $F_{\mu^\theta}$ is symmetric and without loss of generality that it is invertible. 	The argument is based on applying Proposition \ref{prop:modulus.pertubation} to  $F=F_{\mu^\theta}$ and the monotone functions $\psi_+$ and $\psi_-$.

\begin{proof}
	 Clearly, $F$ falls within the scope of Proposition \ref{prop:modulus.pertubation}.
	 Moreover $\psi_+\geq\rm id$ and $\psi_-\leq \rm id$, and if $\kappa$ is as in  Lemma \ref{lem:psi.pm}, then by that lemma $\psi_{\pm}$ satisfy the remaining conditions in Proposition \ref{prop:modulus.pertubation}. 
	 Hence, for the choice  $r=1 + \frac{q}{2q-4}$, there is a constant $c_1=c_1(\kappa,q,L)$ such that 
	\begin{align*}
	&\int_{\delta}^{1-\delta} \left( F_{\mu^\theta}^{-1}\left( \psi_\pm (u)  \right) - F_{\mu^\theta}^{-1}\left( u \right) \right)^2 \,du \\
	&\leq c_1
	\begin{cases}
	\sqrt{\Delta} +\left| \psi^{-1}_\pm\left( \tfrac{1}{2} \right) - \tfrac{1}{2} \right|  + \left(\int_{\delta}^{1-\delta} |\psi_\pm'(u)-1|^r\,du\right)^{1/r}
	&\text{if } q>4,
	\\
	\\
	\sqrt\Delta \log\frac{1}{\Delta} +\left| \psi_\pm^{-1}\left( \tfrac{1}{2} \right) - \tfrac{1}{2} \right|  +  \sqrt{\log\frac{1}{\Delta}} \left(\int_{\delta}^{1-\delta} |\psi_\pm'(u)-1|^2\,du\right)^{1/2}
	 &\text{if } q=4.
	\end{cases}	
	\end{align*}
	
	Clearly $|\psi_\pm ^{-1}(\frac{1}{2})-\frac{1}{2}|\leq 10 \sqrt \Delta$, and by Lemma \ref{lem:psi.pm}
	\[  \left|\psi'_\pm(u)-1 \right|
	\leq 3\sqrt{\frac{\Delta}{ \gamma(u) }} \log\left(\frac{e}{  \gamma(u) } \right).\]
	Thus
	\[  \left(\int_\delta^{1-\delta} \left| \psi'_\pm(u)-1 \right|^r \,du\right)^{1/r}
	\leq
	c_2
	\begin{cases}
	\sqrt\Delta &\text{if } q>4,\\
	\sqrt\Delta \log^{3/2}\frac{1}{\Delta} &\text{if } q=4,
	\end{cases}
	\]
	for a suitable constant $c_2=c_2(\kappa,q)$.
The proof is completed by an application of the $L_2$ triangle inequality.
\end{proof}

\subsection{The empirical tail integrals}
\label{sec:sample.tails}

The last component needed in the proofs of Theorem \ref{thm:Wasserstein.small} and \ref{cor:Wasserstein.small.log-concave.subgaussian} is an estimate on 
\begin{align}
\label{eq:sample.tails}
\sup_{\theta\in S^{d-1}} \left( \int_{U} \left( F_{\mu^\theta_m}^{-1}(u)\right)^2\,du \right)^{1/2}
\end{align}
where  $U=[0,\delta)\cup(1-\delta,1]$.

We begin with a general estimate in terms of
$\rho_{d,m}=\sup_{\theta\in S^{d-1}} | \|\Gamma\theta\|_2^2-1|$ that suffices for the proof of Theorem \ref{thm:Wasserstein.small}.

\begin{Lemma}
\label{lem:tail.integral.bounded.by.Bai.Yin.plus.ratio}
	There are absolute constants $\kappa,c_0,c_1$ and a constant $c_2=c_2(\kappa,q,L)$ such that the following holds.
	If $m\geq c_0 \frac{d}{\Delta}$, then with probability at least $1-\exp(-c_1\Delta m)$,
	\begin{align*}
	\sup_{\theta\in S^{d-1}} \left( \int_{U} \left( F_{\mu^\theta_m}^{-1}(u)\right)^2\,du \right)^{1/2}
	&\leq  c_2
	\begin{cases}
	\rho_{d,m}^{1/2} + \Delta^{1/4}
	&\text{if } q>4, \\
	\\
	\rho_{d,m}^{1/2}  + \Delta^{1/4} \log\frac{1}{\Delta}
	&\text{if } q=4.
	\end{cases}
	\end{align*}
\end{Lemma}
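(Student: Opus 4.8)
The plan is to split the tail range $U$ into an ``inner'' piece, where one can use the pointwise control on $F_{\mu^\theta_m}^{-1}$ coming from Corollary \ref{cor:ratio.distribution.function}, and an ``extreme'' piece, where that control is no longer available and one must fall back on $\rho_{d,m}$. Concretely, fix a further threshold $\delta_0 \sim \Delta$ (smaller than $\delta$, say $\delta_0 = \Delta$ itself, up to the constant $c_0$ in Corollary \ref{cor:ratio.distribution.function}), and write
\[
\int_U \left(F_{\mu^\theta_m}^{-1}(u)\right)^2 du
= \int_{[\delta_0,\delta)\cup(1-\delta,1-\delta_0]} \left(F_{\mu^\theta_m}^{-1}(u)\right)^2 du
+ \int_{[0,\delta_0)\cup(1-\delta_0,1]} \left(F_{\mu^\theta_m}^{-1}(u)\right)^2 du.
\]
On the inner piece, $F_{\mu^\theta}(t)\in[\Delta,1-\Delta]$ for the relevant $t$, so on the event of Corollary \ref{cor:ratio.distribution.function} (which holds with probability $1-\exp(-c_1\Delta m)$) we have $F_{\mu^\theta_m}^{-1}(u)\le F_{\mu^\theta}^{-1}(\psi_+(u))$ and $\ge F_{\mu^\theta}^{-1}(\psi_-(u))$ for $u\in[\delta_0,\delta)$ (and symmetrically near $1$), with $\psi_\pm$ as in Lemma \ref{lem:psi.pm}. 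Since $|\psi_\pm(u)-u|\le \tfrac1{10}\gamma(u)$, we get $|F_{\mu^\theta_m}^{-1}(u)|\le |F_{\mu^\theta}^{-1}(c u)|$ for a constant $c$, and then the deterministic tail estimate \eqref{eq:Wasserstein.tail.decay.norm.equivalence}, $|F_{\mu^\theta}^{-1}(v)|\le L\gamma(v)^{-1/q}$, bounds the inner integral by $c(q,L)\int_0^{c\delta} v^{-2/q}\,dv$. When $q>4$ this is $\le c(q,L)\,\delta^{1-2/q}\le c\,\Delta^{1/2}\log^2(1/\Delta)^{1-2/q}\lesssim \Delta^{1/2}$ after absorbing logs into the constant (or more carefully, $\delta^{1-2/q}\sim \Delta^{1-2/q}\log^{2-4/q}(1/\Delta)$, which is $\le \sqrt\Delta$ once $\Delta$ is small since $1-2/q>1/2$); when $q=4$ the exponent is exactly $1/2$ and one picks up $\delta^{1/2}\sim \Delta^{1/2}\log(1/\Delta)$, explaining the logarithmic loss in the statement.

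The extreme piece is where $\rho_{d,m}$ enters. Here the key identity is that, assuming without loss of generality $\delta_0 m=k$ is an integer, $\int_{[0,\delta_0)}(F_{\mu^\theta_m}^{-1}(u))^2 du = \tfrac1m\sum_{i=1}^{k}(\inr{X_i,\theta}^\ast)^2$ and symmetrically for the top range, so the extreme integral equals (up to a factor $2$) $H_{k,m}^2$ in the notation of \eqref{eq:def.Hsm.intro} with $s=k=\delta_0 m\sim \Delta m$. The plan is to bound $H_{\Delta m,m}$ by $\rho_{d,m}^{1/2}+c\Delta^{1/4}$ on a high-probability event, i.e.\ to show that the contribution of the largest $\sim\Delta m$ coordinates of any $w=(\inr{X_i,\theta})_{i=1}^m$ to $\tfrac1m\|w\|_2^2$ is at most $\rho_{d,m}+c\sqrt\Delta$. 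This is exactly the ``largest coordinates are not too big'' structural input recalled in the introduction (cf.\ \cite{mendelson2014singular,tikhomirov2018sample}); the cleanest route is a one-parameter chaining / peeling over the sparsity level combined with the $L_q-L_2$ norm equivalence and a Bernstein-type bound, together with a union bound over an $\eps$-net of $S^{d-1}$ (of size $e^{cd}$, absorbed since $m\ge c_0 d/\Delta$). An alternative, if the paper has an off-the-shelf estimate for $H_{s,m}$ of the form $H_{\Delta m,m}\le c\Delta^{1/4}$ for $q>4$ (resp.\ with an extra $\log(1/\Delta)$ for $q=4$) on an event of probability $1-\exp(-c\Delta m)$, is simply to cite it; then the $\rho_{d,m}^{1/2}$ term in the conclusion is not even needed for this piece and is only there to make the statement robust.

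Combining, on the intersection of the two events (still of probability $1-\exp(-c_1\Delta m)$ after adjusting constants), the $L_2$-triangle inequality gives
\[
\sup_{\theta\in S^{d-1}}\left(\int_U (F_{\mu^\theta_m}^{-1}(u))^2 du\right)^{1/2}
\le c_2\left(\rho_{d,m}^{1/2}+\Delta^{1/4}\right)
\]
for $q>4$, and with $\Delta^{1/4}$ replaced by $\Delta^{1/4}\log(1/\Delta)$ for $q=4$. The main obstacle is the extreme-piece estimate on $H_{\Delta m,m}$: Corollary \ref{cor:ratio.distribution.function} gives no information about the top $\Delta m$ order statistics (the region $F_{\mu^\theta}(t)\in[0,\Delta)\cup(1-\Delta,1]$ is explicitly excluded), so one genuinely needs the uniform control on the large coordinates from the random-matrix literature, and getting it with the right probability $1-\exp(-c\Delta m)$ (rather than $1-\exp(-cd)$ or with extra $\log(m/d)$ factors) is the delicate point — this is presumably why the statement is phrased in terms of $\rho_{d,m}$, which already encodes a near-optimal such bound, plus a clean $\Delta^{1/4}$ overhead.
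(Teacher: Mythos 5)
Your decomposition is genuinely different from the paper's, and both pieces of it have gaps.

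\emph{The inner piece.} You set $\delta_0\sim\Delta$ and try to sandwich $F_{\mu^\theta_m}^{-1}(u)$ on $[\delta_0,\delta)$ using Corollary~\ref{cor:ratio.distribution.function} and the functions $\psi_\pm$. But Lemma~\ref{lem:psi.pm} and Lemma~\ref{lem:inverse.estimate} are calibrated so that the sandwiching holds only for $u\in[\delta,1-\delta]$: that is exactly the range in which $\psi_\pm(u)\in[\Delta,1-\Delta]$ and $|\psi_\pm(u)-u|\leq\gamma(u)/10$. For $u$ of order $\Delta$, the additive error $\sqrt{\Delta\gamma(u)}\log(e/\gamma(u))$ appearing in \eqref{eq:ratio.estiamte} is of order $\Delta\log(1/\Delta)\gg u$, so the DKW-type bound is vacuous there, and $\psi_-(u)$ is in fact negative. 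Pushing $\delta_0$ up to where the sandwiching actually applies forces $\delta_0\gtrsim\Delta\log^2(1/\Delta)\sim\delta$, so the inner piece collapses to nothing.

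\emph{The extreme piece.} You correctly observe that nothing in the DKW-type machinery controls the top $\sim\Delta m$ order statistics, and you propose to bound $H_{\Delta m,m}$ either by a fresh peeling/chaining argument or by citing the literature. Under the hypotheses of the lemma --- $L_q$-$L_2$ norm equivalence alone for some $q\ge4$, with no control on $\|X\|_2$ --- neither route closes: the $H_{s,m}$ estimates in \cite{mendelson2014singular,tikhomirov2018sample} require an additional regularity hypothesis on $\|X\|_2$ which is not assumed here, and the resulting probability would not come out as $1-\exp(-c\Delta m)$. You flag this as the delicate point, which is accurate, but you do not resolve it.

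The paper sidesteps both issues with a purely algebraic rearrangement. Writing $\int_U(F_{\mu^\theta_m}^{-1})^2=\int_U(F_{\mu^\theta}^{-1})^2+A_\theta+B_\theta$ with $A_\theta=\int_0^1(F_{\mu^\theta_m}^{-1})^2-\int_0^1(F_{\mu^\theta}^{-1})^2$ and $B_\theta=\int_\delta^{1-\delta}(F_{\mu^\theta}^{-1})^2-\int_\delta^{1-\delta}(F_{\mu^\theta_m}^{-1})^2$, one has $A_\theta=\|\Gamma\theta\|_2^2-1$ by isotropicity, so $\sup_\theta|A_\theta|\le\rho_{d,m}$; the deterministic tail integral $\int_U(F_{\mu^\theta}^{-1})^2$ is at most $c\sqrt\Delta$ (resp.\ $c\sqrt\Delta\log\frac1\Delta$ for $q=4$) by \eqref{eq:deterministic.tail.decay.inverse}; and $|B_\theta|\le c\sqrt\Delta$ on the high-probability event of Corollary~\ref{cor:ratio.distribution.function}, via Lemma~\ref{lem:inverse.estimate} and the estimate of Lemma~\ref{lem:modulus.quadratic}. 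The extremal contribution is thus never estimated directly: it is recovered as total second moment minus bulk, which is both how $\rho_{d,m}$ enters and why no independent $H_{s,m}$ bound is needed.
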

\begin{proof}
	Fix $\theta\in S^{d-1}$ and set
	\begin{align*}
	A_\theta&=\int_0^1  F_{\mu^\theta_m}^{-1}(u)^2\,du - \int_{0}^1 F_{\mu^\theta}^{-1}(u)^2\,du,\\
	B_\theta&= \int_{\delta}^{1-\delta}  F_{\mu^\theta}^{-1}(u)^2\,du - \int_{\delta}^{1-\delta}  F_{\mu^\theta_m}^{-1}(u)^2\,du.
	\end{align*}
	Thus
	\begin{align*}
	\int_{U}  F_{\mu^\theta_m}^{-1}(u)^2\,du
	&= \int_U  F_{\mu^\theta}^{-1}(u)^2\,du + A_\theta + B_\theta .
	\end{align*}
	By isotropicity, $A_\theta=\|\Gamma\theta \|_2^2 -1$, and in particular $\sup_{\theta\in S^{d-1}}|A_\theta|\leq  \rho_{d,m}$.
	Moreover, by \eqref{eq:deterministic.tail.decay.inverse},
	\[ \int_U  F_{\mu^\theta}^{-1}(u)^2\,du
	\leq 2 \int_0^\delta \frac{L^2}{u^{2/q}} \,du
	\leq c_1
	\begin{cases}
	\sqrt{\Delta} &\text{if } q>4,\\
	\sqrt\Delta \log\frac{1}{\Delta} &\text{if } q=4,
	\end{cases}\]
	for a constant $c_1=c_1(\kappa,q,L)$.

	As for $B_\theta$, using the estimates from Section \ref{sec:wasserstein.deterministic} and Section \ref{sec:det.error.explicit} one may verify that with probability at least $1-\exp(-c_2\Delta m)$,
	\begin{align}
	\sup_{\theta\in S^{d-1}}|B_\theta|
	\leq c_3
	\begin{cases}
	\sqrt \Delta
	&\text{if } q>4,\\
	\sqrt\Delta \log^2\frac{1}{\Delta}
	&\text{if } q=4 ,
	\end{cases}
	\end{align}
	for a constant $c_3=c_3(\kappa,q,L)$.	
\end{proof}

Lemma \ref{lem:tail.integral.bounded.by.Bai.Yin.plus.ratio} implies that if $\rho_{d,m}\leq \sqrt\Delta$ then \eqref{eq:sample.tails} is (at most) of order $\Delta^{1/4}$---up to a  logarithmic factor. 
While this estimate is sufficient for the proof of Theorem \ref{thm:Wasserstein.small}, it is not enough when the goal is to obtain an upper bound of $\Delta^{1/2}$--- when such an estimate is possible as in Theorem \ref{cor:Wasserstein.small.log-concave.subgaussian}.

That calls for a  more careful analysis of \eqref{eq:sample.tails}, and to that end, let us  re-write \eqref{eq:sample.tails} in  a more standard form.

\begin{Definition}
\label{def:S}
	For $1\leq s\leq m$, let
	\[ H_{s,m}=\sup_{\theta \in S^{d-1}} \max_{  |I|= s} \left(\frac{1}{m} \sum_{i \in I} \inr{X_i,\theta}^2 \right)^{1/2} .\]
	Thus,  $\sqrt{m}H_{s,m}$ is the Euclidean norm of the largest $s$ coordinates of $(|\inr{X_i,\theta}|)_{i=1}^m$  taken in the `worst' direction $\theta$.
\end{Definition}

It is straightforward to verify that \eqref{eq:sample.tails} is equivalent to $H_{\delta m ,m}$. Also, sharp estimates on $H_{s,m}$ have been established as part of the study of $\rho_{d,m}$, (see, e.g., \cite{adamczak2010quantitative,bartl2022random,mendelson2014singular,tikhomirov2018sample}).
Let us outline two such cases:

\begin{Example}
\label{ex:S.subgaussian}
	If $X$ is $L$-subgaussian, then there are constants $c_1,c_2$ depending only on $L$ such that, with probability at least $1-2\exp(-c_1 s\log (em/s))$,
	\begin{align*}
	H_{s,m}
	\leq c_2\left(  \sqrt \frac{d}{m} + \sqrt{ \frac{s}{m} \log \left( \frac{em}{s}\right)}  \right).
	\end{align*}
\end{Example}

	The proof of Example \ref{ex:S.subgaussian} is standard and follows from a net argument and individual  subgaussian tail-decay, see e.g., \cite{bartl2022nongaussian}.

\begin{Example}
\label{ex:S.logconcave}
	If $X$ is a $\psi_1$ random vector (that is, it satisfies $L_q-L_2$ norm equivalence with constant $qL$ for every $q\geq 2$), then there are  constants $c_1,c_2$  depending on $L$, such that with probability at least $1-2\exp(-c_1\sqrt s \log (em/s))$,
	\[	H_{s,m}
	\leq c_2\left(  \max_{1\leq i \leq m} \frac{  \|X_i\|_2 }{ \sqrt m}  +  \sqrt \frac{s}{m} \log \left( \frac{em}{s} \right) \right).\]
\end{Example}

Example \ref{ex:S.logconcave} was  established  \cite{adamczak2010quantitative}, with a minor (but from our perspective important) restriction: an upper bound on $m$ as a function of $d$.
This restriction was later removed in \cite{talagrand2022upper}, see Theorem 14.3.1 and Proposition 14.3.3 therein.

Theorem \ref{thm:tails.log.concave} follows by combining Example \ref{ex:S.logconcave} and a well-known result on the tail-decay of the Euclidean norm of an isotropic log-concave random vector, due to  Paouris \cite{paouris2006concentration}.

\begin{Theorem} \label{thm:Paouris}
There is an absolute constant $c$ such that if $X$ is an isotropic log-concave random vector in $\R^d$ and $u \geq 1$, then
\[
\PP\left( \|X\|_2\geq c u \sqrt d \right)
	\leq 2\exp\left( -u\sqrt d \right).
\]
\end{Theorem}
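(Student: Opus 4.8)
The plan is to deduce the stated tail bound from a bound on the moments of $\|X\|_2$. Write $I_q=(\E\|X\|_2^q)^{1/q}$. Markov's inequality gives $\PP(\|X\|_2\ge t)\le(I_q/t)^q$ for every $q\ge1$ and $t>0$, so it suffices to prove that $I_q\le C\max\{\sqrt d,q\}$ for an absolute constant $C$ and every $q\ge1$: then, given $u\ge1$, one takes $q=u\sqrt d$ (so $q\ge\sqrt d$ and $\max\{\sqrt d,q\}=q$) and $t=eCu\sqrt d$, and obtains $\PP(\|X\|_2\ge eCu\sqrt d)\le e^{-q}=e^{-u\sqrt d}\le2e^{-u\sqrt d}$, which is the claim with $c=eC$. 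Note that, since $u\ge1$, only the range $q\ge\sqrt d$ of the moment estimate is ever used.

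For $q\ge d$ the moment estimate is elementary. Fix a $1/2$-net $\mathcal N$ of $S^{d-1}$ with $|\mathcal N|\le5^d$, so that $\|X\|_2\le2\max_{\theta\in\mathcal N}\inr{X,\theta}$. Since $X$ is isotropic and log-concave, each marginal $\inr{X,\theta}$ is a centred log-concave variable of variance $1$, hence $\psi_1$, and $\|\inr{X,\theta}\|_{L_q}\le c_0q$ for every $q\ge1$. A union bound gives $\E\|X\|_2^q\le2^q|\mathcal N|(c_0q)^q$, i.e. $I_q\le2\cdot5^{d/q}c_0q\le10c_0q$ once $q\ge d$. The remaining — and substantive — range $\sqrt d\le q\le d$ is exactly Paouris's estimate on the $L_q$-moments of the Euclidean norm of an isotropic log-concave vector \cite{paouris2006concentration}; indeed the theorem as stated is, up to the value of $c$, his concentration inequality, and I would invoke it rather than reprove it. For orientation, the architecture of that proof is: introduce the $L_q$-centroid bodies $Z_q(\mu)$, determined by $h_{Z_q(\mu)}(\theta)=\|\inr{X,\theta}\|_{L_q}$ for $\theta\in S^{d-1}$, for which $B_2^d=Z_2(\mu)\subseteq Z_q(\mu)\subseteq c_0q\,B_2^d$; integrate the identity $\|x\|_2^q=\kappa_{q,d}^{-1}\int_{S^{d-1}}|\inr{x,\theta}|^q\,d\sigma(\theta)$ (with $\sigma$ the uniform probability on $S^{d-1}$ and $\kappa_{q,d}\sim(q/d)^{q/2}$) against $\mu$ to get $I_q(\mu)\sim\sqrt{d/q}\,\|h_{Z_q(\mu)}\|_{L_q(\sigma)}$; establish the volume estimates $|Z_q(\mu)|^{1/d}\sim\sqrt{q/d}$ for $1\le q\le d$ (the lower bound is where the isotropic normalisation enters, via Lutwak--Yang--Zhang-type inequalities or Paouris's associated bodies $K_q(\mu)$ together with Borell's lemma); combine them with Urysohn's and Santal\'o/Bourgain--Milman's inequalities to force the Dvoretzky (concentration) dimension of $Z_q(\mu)$ to grow at least like $d/q$, hence to exceed $q$ for all $q\lesssim\sqrt d$, so that $q_\ast(\mu):=\max\{q:k_\ast(Z_q(\mu))\ge q\}\gtrsim\sqrt d$; and finally, on $q\le q_\ast(\mu)$ the $L_q(\sigma)$-norm of $h_{Z_q(\mu)}$ is comparable to its mean, which with an iteration in $q$ pins $I_q(\mu)\sim\sqrt d$ there and $I_q(\mu)\lesssim q$ for $\sqrt d\lesssim q\le d$. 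Assembling the three ranges yields $I_q\le C\max\{\sqrt d,q\}$ for every $q\ge1$, and the first paragraph finishes the argument.

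The main obstacle is the centroid-body step: the volume bound for $Z_q(\mu)$ and the quantitative passage from ``$Z_q(\mu)$ is Euclidean at scale $q$'' to the sharp order of $I_q(\mu)$ together constitute the substance of \cite{paouris2006concentration} (a streamlined variant is the large-deviation estimate of Gu\'edon--Milman), and reproducing it is well beyond what is needed here; in the present paper the result is used as a black box, exactly as in the statement.
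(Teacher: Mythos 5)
The paper does not prove this statement: it is invoked as a known result and attributed to Paouris \cite{paouris2006concentration}, which is exactly what you do. Your reduction (Markov with $q=u\sqrt d$, plus the elementary net bound $I_q\le 10c_0q$ for $q\ge d$ using Borell's $\psi_1$ estimate) is correct, and the sketch of Paouris's centroid-body argument is a faithful outline, but the substantive step is still a citation of Paouris, matching the paper's treatment.
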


\begin{proof}[Proof of Theorem \ref{thm:tails.log.concave}]
By Borell's lemma, an isotropic log-concave random vector is $\psi_1$ with an absolute constant $L$. Thus, following Example \ref{ex:S.logconcave}, it remains show that there are absolute constants $c_1,c_2$ such that with probability at least $1-2\exp(-c_1\sqrt{\Delta m}\log^2(\frac{1}{\Delta}))$,
	\[ \max_{1\leq i\leq m} \frac{\|X_i\|_2}{\sqrt m} \leq c_2 \sqrt\Delta \log^2 \left(\frac{1}{\Delta}\right). \]
Let $u=\beta \sqrt\frac{\Delta m}{d} \log^2 (\frac{1}{\Delta})$ and by Theorem \ref{thm:Paouris} and the union bound,
	\[\PP\left(\max_{1\leq i\leq m} \frac{\|X_i\|_2}{\sqrt m} \geq c_3 \beta \sqrt\Delta \log^2 \left(\frac{1}{\Delta} \right) \right)
	\leq 2m\exp\left( -\beta \sqrt{\Delta m} \log^2\left(\frac{1}{\Delta}\right) \right). \]
	Hence, recalling that  $\Delta \geq d/m$ and $m\geq 2d$, the claim follows if $\beta$ is a sufficiently large absolute constant. 
\end{proof}

\subsection{Proofs of Theorems \ref{thm:Wasserstein.small} and \ref{cor:Wasserstein.small.log-concave.subgaussian}}

Let us connect all the steps we have made, leading to the proofs of Theorems \ref{thm:Wasserstein.small} and \ref{cor:Wasserstein.small.log-concave.subgaussian}. 

\vspace{0.5em}
\noindent Set $\kappa$ as in Lemma \ref{lem:psi.pm}. 
Note that for an absolute constant $c_1$ and a constant $c_2=c_2(L)$,
\[
A=\sup_{\theta\in S^{d-1}}\int_U \left( F_{\mu^\theta}^{-1}(u) \right)^2 \,du \leq
\begin{cases}
c_1 \Delta \log^4\frac{1}{\Delta} & \text{ if $X$ is log-concave,}\\
c_2 \Delta \log^3\frac{1}{\Delta} & \text{ if $X$ is $L$-subgaussian.}
\end{cases}
  \]
And in the general case, when $X$ only satisfies $L_q-L_2$ norm equivalence with a constant $L$,  there is a constant $c_3=c_3(q,L)$ such that $A\leq c_3 \sqrt{\Delta}$ if $q>4$ and $A\leq c_3 \sqrt\Delta \log (\frac{1}{\Delta})$ if $q=4$.

\vspace{0.5em}
\noindent
The empirical tail integral $\sup_{\theta\in S^{d-1}}\int_U ( F_{\mu^\theta_m}^{-1}(u) )^2 \,du$ is controlled by Theorem \ref{thm:tails.log.concave} (applied to $s=\delta m$) in the log-concave case;  by Example  \ref{ex:S.subgaussian} (again applied to $s=\delta m$) in the subgaussian case; and by  Lemma \ref{lem:tail.integral.bounded.by.Bai.Yin.plus.ratio} in the general case.

\vspace{0.5em}
\noindent
As for $\sup_{\theta\in S^{d-1}}\int_\delta^{1-\delta} ( F_{\mu^\theta_m}^{-1}(u)-F_{\mu^\theta}^{-1}(u) )^2 \,du$,  fix a realization $(X_i)_{i=1}^m$ for which \eqref{eq:ratio.estiamte} holds.
The log-concave case follows from the arguments presented in Section \ref{sec:wasserstein.proof.log.concave.highlight}, and the general case follows from Corollary \ref{cor:wasserstein.smaller.deterministic} and Lemma \ref{lem:det.integrals.pertubations}.
\qed

\vspace{1em}

Finally, let us record the following (trivial) bound and its highly useful consequence---obtained by invoking Theorem \ref{thm:Wasserstein.small}. 
Observe that if $X$ is isotropic and satisfies $L_4-L_2$ norm equivalence with a constant $L$, then by the triangle inequality and the tail-estimate \eqref{eq:deterministic.tail.decay.inverse} we have that for every $1\leq s\leq m$,
	\[
H_{s,m} \leq \mathcal{SW}_2(\mu_m,\mu) + c(L) \left(\frac{s}{m}\right)^{1/4}.
\]
Setting $\Delta=cd/m$ in Theorem \ref{thm:Wasserstein.small} leads to the following:

\begin{Corollary} 
\label{rem:bound.H.via.S.W.trivial}
Assume that 
\begin{description}
\item{$(1)$} $X$ is isotropic and satisfies $L_4-L_2$ norm equivalence with  constant $L$; and
\item{$(2)$} there is some $\alpha \geq 4$ such that with probability $1-\eta$, $
\rho_{d,m} \leq \tilde{c} (d/m)^{2/\alpha}$.
\end{description}
Then there are constants $c_1,\dots,c_4$ that depend on $L$ and $\tilde{c}$ such that, if $m\geq c_1 d$, then with probability at least $1-\eta-\exp(-c_2 d)$ both
\[
\mathcal{SW}_2(\mu_m,\mu)
\leq c_3 \left(\frac{d}{m}\right)^{1/\alpha} \log\left(\frac{m}{d}\right), \]
and for every $1\leq s\leq m$,
\[
H_{s,m} \leq c_4 \left( \left(\frac{s}{m}\right)^{1/4} +  \left(\frac{d}{m}\right)^{1/\alpha} \log\left(\frac{m}{d}\right) \right)  .
\]
\end{Corollary}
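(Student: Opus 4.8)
The plan is to obtain everything as a direct consequence of Theorem \ref{thm:Wasserstein.small} applied in the $q=4$ regime with a scale parameter $\Delta$ of order $d/m$, and then to feed the resulting bound on $\mathcal{SW}_2(\mu_m,\mu)$ into the elementary inequality $H_{s,m}\le \mathcal{SW}_2(\mu_m,\mu)+c(L)(s/m)^{1/4}$ recorded immediately before the statement. No new probabilistic input is needed: assumption $(2)$ supplies the bound on $\rho_{d,m}$, and Theorem \ref{thm:Wasserstein.small} converts it into a bound on $\mathcal{SW}_2(\mu_m,\mu)$.

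In detail, I would fix $\Delta=c_\ast d/m$ where $c_\ast$ is an absolute constant chosen at least as large as the constant $c_1$ from the sample-size requirement $m\ge c_1 d/\Delta$ of Theorem \ref{thm:Wasserstein.small}; with this choice that requirement holds automatically, and imposing in addition $\Delta\le c_0$ becomes $m\ge (c_\ast/c_0)d$, which is exactly the hypothesis $m\ge c_1 d$ of the corollary (taking $c_1$ also large enough that $\log(m/d)\ge 1$ and $d/m\le 1$). Since the hypotheses only guarantee $L_4$–$L_2$ norm equivalence, we are in the $q=4$ case, so with probability at least $1-\exp(-c_2\Delta m)=1-\exp(-c_2 c_\ast d)$ one has $\mathcal{SW}_2(\mu_m,\mu)\le c_3\big(\rho_{d,m}^{1/2}+\Delta^{1/4}\log\tfrac1\Delta\big)$. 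Intersecting with the event of assumption $(2)$ (an extra $\eta$ in probability), we have $\rho_{d,m}^{1/2}\le \tilde{c}^{1/2}(d/m)^{1/\alpha}$ there. The only arithmetic point is that $\alpha\ge 4$ and $d/m\le 1$ give $(d/m)^{1/4}\le (d/m)^{1/\alpha}$, so $\Delta^{1/4}\log(1/\Delta)\le C(c_\ast)(d/m)^{1/4}\log(m/d)\le C(c_\ast)(d/m)^{1/\alpha}\log(m/d)$; combining and using $\log(m/d)\ge 1$ to absorb the $\rho_{d,m}^{1/2}$ term yields $\mathcal{SW}_2(\mu_m,\mu)\le c_3(\tilde c,L)(d/m)^{1/\alpha}\log(m/d)$. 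Plugging this into $H_{s,m}\le \mathcal{SW}_2(\mu_m,\mu)+c(L)(s/m)^{1/4}$ on the same event gives $H_{s,m}\le c_4\big((s/m)^{1/4}+(d/m)^{1/\alpha}\log(m/d)\big)$ uniformly in $1\le s\le m$.

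I do not expect a genuine obstacle: the statement is a corollary, and the entire content is bookkeeping. The two things to keep track of are that $\Delta\asymp d/m$ must simultaneously satisfy the lower constraint $m\ge c_1 d/\Delta$ and the upper constraint $\Delta\le c_0$ of Theorem \ref{thm:Wasserstein.small} (this is what pins down the constant $c_1$ of the corollary, and forces the failure probability into the form $\exp(-c_2 d)$), and that the $q=4$ logarithmic loss $\log(1/\Delta)$ is precisely the $\log(m/d)$ appearing in the conclusion, with the elementary exponent comparison $(d/m)^{1/4}\le(d/m)^{1/\alpha}$ taking care of the empirical tail term $\Delta^{1/4}$.
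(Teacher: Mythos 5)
Your proposal is correct and takes essentially the same route as the paper's own (very brief) justification: set $\Delta \sim d/m$ in Theorem \ref{thm:Wasserstein.small} with $q=4$, intersect with the event of assumption $(2)$ to replace $\rho_{d,m}^{1/2}$ by $\tilde{c}^{1/2}(d/m)^{1/\alpha}$, absorb $\Delta^{1/4}\log(1/\Delta)$ into $(d/m)^{1/\alpha}\log(m/d)$ using $\alpha\geq 4$, and finally plug into the trivial bound $H_{s,m}\leq \mathcal{SW}_2(\mu_m,\mu)+c(L)(s/m)^{1/4}$ recorded immediately before the corollary.
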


Note that the case $\alpha=4$ corresponds to the quantitative Bai-Yin estimate, namely that with high probability, $\rho_{d,m} \leq \tilde{c} \sqrt{d/m}$.

\subsection{Additional proofs}

For $\theta\in S^{d-1}$ and  $i=1,\dots, m-1$, denote by $q_i^\theta=F_\theta^{-1}(\frac{i}{m})$ the $\frac{i}{m}$-quantile of  $\mu^\theta$, and  set $q_m^\theta=q_{m-1}^\theta$.
Recall that $\lambda_i^\theta = \int_{(i-1)/m}^{i/m} F_\theta^{-1}(u)\, du$ and thus 
\[q_{i-1}^\theta\leq \lambda_i^\theta\leq q_{i}^\theta \quad\text{for }1< i <m.\]

\begin{Lemma}
\label{lem:Gamma.theta.quantils.averaged.vs.not}
	Let $X$ be centred and isotropic and let $m\geq 4$.
	If either 
	\begin{enumerate}
	\item[(A1)] $\sup_{\theta \in S^{d-1}} \|\inr{X,\theta}\|_{L_4} \leq L$ for some $L\geq 1$, or
	\item[(A2)] $X$ is log-concave,
	\end{enumerate}
	then there is a constant $c_1$ that depends only on $L$ and an absolute constant $c_2$ such that
	\begin{align*}
	&  \left| \sup_{\theta\in S^{d-1}} \left( \frac{1}{m}\sum_{i=1}^m \left( \inr{X_i,\theta}^\sharp - q_i^\theta\right)^2 \right)^{1/2} -  \sup_{\theta\in S^{d-1}} \left( \frac{1}{m}\sum_{i=1}^m \left( \inr{X_i,\theta}^\sharp - \lambda_i^\theta\right)^2 \right)^{1/2} \right| \\
	&\leq 
	\begin{cases}
	 c_1 / m^{1/4} & \text{if  (A1) holds},\\
	c_2 \log(m)/\sqrt{m} & \text{if  (A2) holds}.
 	 \end{cases}
 	 \end{align*}	 
\end{Lemma}
\begin{proof}
	By the triangle inequality, it suffices to prove that
	\[\sup_{\theta\in S^{d-1}}\|\lambda^\theta-q^\theta\|_2 \leq \begin{cases}
	 c_1 m^{1/4} & \text{if  (A1) holds},\\
	c_2 \log(m) & \text{if  (A2) holds}.
 	 \end{cases}\]
 	 To that end, fix $\theta\in S^{d-1}$.
	Since $q_{i-1}^\theta\leq \lambda_i^\theta\leq q_{i}^\theta$ for $1<i<m$,
	\[\|\lambda^\theta-q^\theta\|_2^2
	 \leq \sum_{i=2}^{m-1} (q^\theta_{i-1}- q^\theta_{i})^2 + 2\left( (\lambda^\theta_1)^2 + (q^\theta_1)^2 + (\lambda^\theta_m)^2 + (q^\theta_m)^2 \right). \]
	The four terms $\lambda^\theta_1,q^\theta_1,\lambda^\theta_m,q^\theta_m$ can be bounded using the tail-estimate on $F^{-1}_{\mu^\theta}$.
	Specifically, under Assumption (A1) we have  that $|\lambda_1^\theta|\leq c_3(L)m^{1/4}$ (see \eqref{eq:Wasserstein.tail.decay.norm.equivalence}), whereas under Assumption (A2) we have that $|\lambda_1^\theta|\leq c_4\log(m)$  (see \eqref{eq:tail.log.concave}). Similar bounds are true for the terms $ q^\theta_1,\lambda^\theta_m,q^\theta_m$.
	Thus, in both cases all that remains is to estimate 
	\[\sum_{i=2}^{m-1} (q^\theta_{i-1}- q^\theta_{i})^2.\]
	
	Case 1 --- Assumption (A1) holds.
	
In that case, 
	\[\sum_{i=2}^{m-1} (q^\theta_{i-1}- q^\theta_{i})^2
	=\sum_{i=2}^{m-1} (q^\theta_{i-1})^2- 2 \sum_{i=2}^{m-1} q^\theta_{i-1} q^\theta_{i} + \sum_{i=2}^{m-1} (q^\theta_{i})^2
	=A -2 B + D. \]
	By the tail-estimate \eqref{eq:Wasserstein.tail.decay.norm.equivalence}, $|A+D- 2\|q^\theta\|_2^2| \leq c_5(L)\sqrt{m}$.
	Thus, it suffices to show that $|B - \|q^\theta\|_2^2|\leq c_6(L)\sqrt m$.
	To that
end, set $i_{0}$ to be the largest integer smaller than or equal to 
$\frac{m}{2}$. Since $m\geq 4$ we have
$\frac{i_{0}}{m} \in [\frac{2}{5},\frac{1}{2}]$ and by
the tail-estimate \eqref{eq:Wasserstein.tail.decay.norm.equivalence},
$|q^{\theta}_{i_{0}} q^{\theta}_{i_{0}+1}|\leq c_{7}(L)$. Moreover,
$X$ is symmetric and therefore $q^{\theta}_{i}\leq 0$ if
$i\leq i_{0}$ and $q^{\theta}_{i}\geq 0$ if $i>i_{0}$. Using the monotonicity
of $i\mapsto q^{\theta}_{i}$,
\begin{align*}
B & \geq \sum _{i=2}^{i_{0}} q^{\theta}_{i-1} q^{\theta}_{i} +
\sum _{i=i_{0}+2}^{m-1} q^{\theta}_{i-1} q^{\theta}_{i} - c_{7}
\\
&\geq \sum _{i=2}^{i_{0}} (q^{\theta}_{i})^{2} + \sum _{i=i_{0}+2}^{m-1}
(q^{\theta}_{i-1})^{2} - c_{7}
\\
&= \|q^{\theta}\|_{2}^{2} -(q_{1}^{\theta})^{2} - (q_{m-1}^{\theta})^{2}
- (q_{m}^{\theta})^{2} -c_{7} \geq \|q^{\theta}\|_{2}^{2} - c_{8}(L)
\sqrt{m},
\end{align*}
	where we used the tail-estimate \eqref{eq:Wasserstein.tail.decay.norm.equivalence} in the last inequality.
	The corresponding upper estimate on $B$ follows from the same argument, showing that indeed 
	\[|B- \|q^\theta\|_2^2 | \leq  c_9(L)\sqrt{m}.\]
	
	Case 2 --- Assumption (A2) holds.

	Fix $1<i<m$ and observe that by a first order Taylor expansion of $F^{-1}_{\mu^\theta}$ followed by Cheeger's inequality \eqref{eq:cheeger}, we have that
	\[ |q_{i-1}^\theta-q_{i}^\theta|
	=\left| F_{\mu^\theta}^{-1} \left( \tfrac{i-1}{m} \right)  -  F_{\mu^\theta}^{-1} \left( \tfrac{i}{m} \right) \right|
	 \leq \frac{1}{h \min\{i-1, m-i\} }, \]
	where $h$ is the absolute constant appearing in \eqref{eq:cheeger}.
	In particular, $\sum_{i=2}^{m-1} (q_{i-1}^\theta-q_{i}^\theta)^2 \leq c_{10}/h^2$, as required.
\end{proof}

\begin{Lemma}
\label{lem:Gamma.theta.geq.SW2}
	Let $X$ be centred and isotropic. If either
	\begin{enumerate}
	\item[(A1)] $\sup_{\theta \in S^{d-1}} \|\inr{X,\theta}\|_{L_4} \leq L$ for some $L\geq 1$, or
	\item[(A2)] $X$ is log-concave,
	\end{enumerate}
	then there is a constant $c_1$ that depends only on $L$ and an absolute constant $c_2$ such that 
	\[   \sup_{\theta\in S^{d-1}} \left( \frac{1}{m}\sum_{i=1}^m \left( \inr{X_i,\theta}^\sharp - \lambda_i^\theta\right)^2 \right)^{1/2}
	\geq \mathcal{SW}_2(\mu_m,\mu) -
	\begin{cases}
	 c_1/ m^{1/4} & \text{if  (A1) holds},\\
	c_2 \log(m)/\sqrt{m} & \text{if  (A2) holds}.
 	 \end{cases}\]
\end{Lemma}
\begin{proof}
	Fix $\theta\in S^{d-1}$.
	For $u\in(0,1)$, set $ i(u)$  to be the smallest integer larger than $um$.
	Then $u\in(\frac{i-1}{m},\frac{i}{m}]$ and  by the definition of the right-inverse, $F^{-1}_{\mu^\theta_m}(u)=\inr{X_{i(u)},\theta}^\sharp$.

	Moreover, by the triangle inequality,
	\begin{align*}
	 \mathcal{W}_2(\mu^\theta_m,\mu^\theta)
	&= \left( \int_0^1 \left( F^{-1}_{\mu^\theta_m}(u) - F^{-1}_{\mu^\theta}(u) \right)^2\,du \right)^{1/2} \\
	&\geq \left( \int_0^1 \left( \inr{X_{i(u)},\theta}^\sharp - \lambda^\theta_{i(u)} \right)^2\,du \right)^{1/2} 
	-\left( \int_0^1 \left( \lambda^\theta_{i(u)} - F^{-1}_{\mu^\theta}(u) \right)^2\,du \right)^{1/2}
	=A+B.
	\end{align*}
	Clearly, $A^2 = \frac{1}{m}\sum_{i=1}^m( \inr{X_{i},\theta}^\sharp - \lambda^\theta_{i})^2$.
	Thus, to complete the proof it remains to show that $B\leq c_1(L)/m^{1/4}$ when (A1) holds, and  that $B\leq c_2\log(m)/\sqrt{m}$  when (A2) holds.

	The proofs of these facts follow using identical arguments to the ones presented in the proof of Lemma \ref{lem:Gamma.theta.quantils.averaged.vs.not}, and we omit the details.
\end{proof}

\section{Lower bounds on the Wasserstein distance} \label{sec:lower-W2}

This section is devoted to showing that Theorem \ref{thm:Wasserstein.small} and Theorem \ref{cor:Wasserstein.small.log-concave.subgaussian} are optimal in a strong sense.

\subsubsection*{Estimates with constant probability}
We begin with lower bounds on $\mathcal{SW}_2(\mu_m,\mu)$ that hold with constant probability, exhibiting that the dependence on $\Delta$ in Theorem \ref{thm:Wasserstein.small} and Theorem \ref{cor:Wasserstein.small.log-concave.subgaussian} cannot be improved even if one is willing to accept much weaker probability estimates. 

\begin{Lemma} \label{lemma:lower-weak-nontrivial}
Let $X$ be centred and isotropic, and set $0<\beta<1$ such that $\E\|X\|_2 \geq \beta \sqrt{d}$. 
Then with probability at least $c_0(\beta)$,
\begin{equation} \label{eq:S-lower-1}
\mathcal{SW}_2(\mu_m,\mu) \geq c_1(\beta)\sqrt{\frac{d}{m}}.
\end{equation}
\end{Lemma}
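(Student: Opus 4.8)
The plan is to reduce the statement to a small-ball lower bound for the empirical mean $\bar X_m:=\frac1m\sum_{i=1}^m X_i$, and then establish that small-ball bound by hand, in two regimes according to the size of $d$. The reduction is this: for every $\theta\in S^{d-1}$ and every coupling $\Pi$ of $\mu_m^\theta$ and $\mu^\theta$, testing $\Pi$ against the function $t\mapsto t$ and using Jensen's inequality gives $|\langle\bar X_m,\theta\rangle|=\big|\int(s-t)\,\Pi(ds,dt)\big|\leq\big(\int(s-t)^2\,\Pi(ds,dt)\big)^{1/2}$, since $\int s\,\mu_m^\theta(ds)=\langle\bar X_m,\theta\rangle$ while $\int t\,\mu^\theta(dt)=0$ by centredness. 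Taking the infimum over $\Pi$ and the supremum over $\theta$ yields $\mathcal{SW}_2(\mu_m,\mu)\geq\sup_\theta|\langle\bar X_m,\theta\rangle|=\|\bar X_m\|_2$. Since $X$ is centred and isotropic the cross terms vanish, so $\mathbb{E}\|\bar X_m\|_2^2=\tfrac1{m^2}\sum_i\mathbb{E}\|X_i\|_2^2=\tfrac dm$; hence it suffices to show that with probability at least $c_0(\beta)$ one has $\|\bar X_m\|_2\geq c_1(\beta)\sqrt{d/m}$, a lower deviation for $\|\bar X_m\|_2$ at its own $L_2$-scale. The subtlety — and the point where the hypothesis $\mathbb{E}\|X\|_2\geq\beta\sqrt d$ is really used — is that such a bound would normally come from Paley–Zygmund, which requires control of $\mathbb{E}\|X\|_2^4$; that is unavailable here, so one must run a truncated version of the argument.

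\emph{Case $d\geq d_0(\beta)$.} Expand $m^2\|\bar X_m\|_2^2=\sum_i\|X_i\|_2^2+\sum_{i\neq j}\langle X_i,X_j\rangle$. For the diagonal term I would apply Paley–Zygmund not to $\|X_i\|_2^2$ but to the bounded truncations $\|X_i\|_2^2\wedge K^2d$, for a constant $K=K(\beta)$. The elementary input is that $\mathbb{E}\|X\|_2\geq\beta\sqrt d$ together with $\mathbb{E}\|X\|_2^2=d$ forces $\mathbb{E}\big[\|X\|_2^2\,\mathbf{1}_{\{\|X\|_2\leq K\sqrt d\}}\big]\geq\tfrac{\beta^2}{8}d$ once $K\geq\sqrt8/\beta$: split $\mathbb{E}\|X\|_2$ at the level $K\sqrt d$ and bound the tail contribution by $\mathbb{E}\|X\|_2^2/(K\sqrt d)=\sqrt d/K$. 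Since the truncated summands are bounded by $K^2d$ and have total expectation at least $\tfrac{\beta^2}{8}md$, Paley–Zygmund gives $\sum_i\|X_i\|_2^2\geq\tfrac{\beta^2}{8}md$ with probability at least some $p_1(\beta)>0$, valid for every $m\geq1$. For the off-diagonal term, note it has mean $0$ and, by isotropy, variance $\sum_{i\neq j}2\,\mathbb{E}\langle X_i,X_j\rangle^2=2m(m-1)d$; Chebyshev then shows it is $\geq-\tfrac{\beta^2}{16}md$ except with probability $O(\beta^{-4}d^{-1})$, which is at most $\tfrac12p_1(\beta)$ once $d\geq d_0(\beta)$. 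On the intersection of the two events $m^2\|\bar X_m\|_2^2\geq\tfrac{\beta^2}{16}md$, i.e.\ $\|\bar X_m\|_2\geq\tfrac\beta4\sqrt{d/m}$, which therefore happens with probability at least $\tfrac12p_1(\beta)$.

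\emph{Case $d<d_0(\beta)$.} Now $d$ is bounded by a constant depending only on $\beta$, so it suffices to show $\mathcal{SW}_2(\mu_m,\mu)\geq c(\beta)/\sqrt m$. Pick $\theta^*\in S^{d-1}$ attaining $\beta^*:=\sup_{\theta\in S^{d-1}}\mathbb{E}|\langle X,\theta\rangle|$; a $\tfrac12$-net argument on the sphere gives $\mathbb{E}\|X\|_2\leq 2\cdot5^d\,\beta^*$, hence $\beta^*\geq\beta/(2\cdot5^{d_0(\beta)})>0$. By the reduction, $\mathcal{SW}_2(\mu_m,\mu)\geq|\langle\bar X_m,\theta^*\rangle|=\big|\tfrac1m\sum_iY_i\big|$ with $Y_i:=\langle X_i,\theta^*\rangle$ i.i.d., centred, $\mathbb{E}Y_i^2=1$, $\mathbb{E}|Y_i|=\beta^*$. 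For such one-dimensional variables a small-ball bound follows from second moments alone: with $Y_i'$ an independent copy one has $\mathbb{E}|Y_i-Y_i'|\geq\mathbb{E}|Y_i|$ by Jensen, and, conditioning on $(|Y_i-Y_i'|)_i$, the lower Khinchin inequality followed by Cauchy–Schwarz gives $\mathbb{E}\big|\sum_i(Y_i-Y_i')\big|\geq c\sqrt m\,\mathbb{E}|Y_1-Y_1'|$, whence $\mathbb{E}\big|\sum_iY_i\big|\geq c'\sqrt m\,\beta^*$; together with $\mathbb{E}\big|\sum_iY_i\big|^2=m$, Paley–Zygmund yields $\big|\sum_iY_i\big|\geq c''\beta^*\sqrt m$ with probability at least $c'''(\beta^*)^2$. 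Since $1\leq d\leq d_0(\beta)$ this is exactly the required bound.

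The step I expect to be the real obstacle is the diagonal estimate in the first case — quantitatively extracting "second-moment mass at scale $\sqrt d$" from $\mathbb{E}\|X\|_2\geq\beta\sqrt d$ so that the truncated Paley–Zygmund survives — together with the bookkeeping needed to make the two regimes meet up with all constants depending only on $\beta$.
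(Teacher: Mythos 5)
Your reduction to the small-ball estimate for $\|\bar X_m\|_2$ is exactly the paper's (it is \eqref{eq:S-coupling-1}--\eqref{eq:S-coupling-2}), and your two-case proof of that estimate is correct, but the route is genuinely different from---and more involved than---the one in the paper. The paper applies Paley--Zygmund to $\|X\|_2$ itself (not to $\|X\|_2^2$), using the hypothesis $\E\|X\|_2\geq\beta\sqrt d$ as the first-moment bound and $\E\|X\|_2^2=d$ as the second-moment bound; this directly gives $\PP(\|X\|_2\geq c_2(\beta)\sqrt d)\geq c_3(\beta)$ with no truncation and no fourth moment. Your remark that ``Paley--Zygmund would require control of $\E\|X\|_2^4$'' is the reason you go through the truncation in Case $d\geq d_0$, but this concern applies only if one squares before applying the inequality; the paper avoids it from the start. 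After the Paley--Zygmund step the paper then lower-bounds $\E\|\bar X_m\|_2$ by symmetrization plus Kahane--Khintchine, $\E\|\bar X_m\|_2 \geq c\,\E_X(\tfrac1{m^2}\sum_i\|X_i\|_2^2)^{1/2}\geq c_6(\beta)\sqrt{d/m}$, and finishes with one more (implicit) Paley--Zygmund using $\E\|\bar X_m\|_2^2=d/m$---a single uniform argument with no case split on $d$. Your Case $d< d_0$ argument (net to extract a direction $\theta^*$ with $\E|\langle X,\theta^*\rangle|\geq\beta^*$, then a one-dimensional symmetrization/Khinchin + Paley--Zygmund) is in fact a scalar shadow of exactly this symmetrization--Khinchin--Paley--Zygmund scheme; running it directly for the $\R^d$-valued $\bar X_m$ makes the constant-$5^d$ loss, the extraction of $\theta^*$, and the split into diagonal/off-diagonal terms in Case $d\geq d_0$ all unnecessary. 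So: correct, but the paper's single-shot argument is both shorter and yields cleaner dependence on $\beta$.
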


Note that if $X$ is isotropic and log-concave then $\E\|X\|_2 \geq c\sqrt{d}$ for an absolute constant $c$. In particular, the dependence on $\Delta$ in  Theorem \ref{cor:Wasserstein.small.log-concave.subgaussian} cannot be improved (up to the logarithmic factor) even in the constant probability regime.

\vspace{0.5em}
The proof of Lemma \ref{lemma:lower-weak-nontrivial} requires a standard observation that will be used again in what follows.

Let $\tau$ be a centred probability measure on $\R$ and set $Y$ to be distributed according to $\tau$. Let $(Y_i)_{i=1}^m$ be independent copies of $Y$ and put $\tau_m$ to be the corresponding empirical measure. Consider an optimal coupling $\Pi$ of $\tau_m$ and $\tau$ in the $\mathcal{W}_2$ sense. Since $\tau$ is centred, it follows from the Cauchy-Schwarz inequality that
\begin{equation} \label{eq:S-coupling-1}
\left|\frac{1}{m}\sum_{i=1}^m Y_i\right| 
= \left|\int_{\R} x \, \tau_m(dx)\right|
=\left|\int_{\R \times \R} (x-y) \,\Pi(dx,dy)\right| 
\leq  \mathcal{W}_2(\tau_m,\tau).
\end{equation}

\vspace{0.5em}
\begin{proof} [Proof of Lemma \ref{lemma:lower-weak-nontrivial}]
By \eqref{eq:S-coupling-1} applied to all the one-dimensional marginals of $X$, \begin{equation} \label{eq:S-coupling-2}
\left\|\frac{1}{m}\sum_{i=1}^m X_i \right\|_2 \leq \mathcal{SW}_2(\mu_m,\mu).
\end{equation}
In particular, to establish \eqref{eq:S-lower-1} it suffices to show that if $\E\|X\|_2 \geq \beta \sqrt{d}$, then with probability at least $c_0(\beta)$
\[
\left\|\frac{1}{m}\sum_{i=1}^m X_i \right\|_2 \geq c_1(\beta) \sqrt{\frac{d}{m}}.
\]
To that end, note that by isotropicity, $\E\|X\|_2^2=d$. Invoking the Paley-Zygmund inequality (see, e.g., \cite{de2012decoupling}), there are constants $c_2$ and $c_3$ that depend on $\beta$ for which
\[
\PP\left( \|X\|_2 \geq c_2 \sqrt{d}\right) \geq c_3.
\]
A standard binomial estimate shows that there is a constant $c_4(\beta)$ such that with probability at least $1-2\exp(-c_4 m)$,
\[
\left|\left\{ i : \|X_i\|_2 \geq c_2 \sqrt{d} \right\}\right| \geq \frac{c_3}{2}m,
\]
and by symmetrization and the Kahane-Khintchine inequality (see, e.g., \cite{ledoux1991probability}),
\begin{equation*}
\E \left\| \frac{1}{m} \sum_{i=1}^m X_i \right\|_2 \geq
c_5 \E \left( \frac{1}{m^2}\sum_{i=1}^m \|X_i\|_2^2 \right)^{1/2} \geq c_6 (\beta) \sqrt{\frac{d}{m}}.
\qedhere
\end{equation*}
\end{proof}

Next, let us establish the optimality of Theorem \ref{thm:Wasserstein.small} by constructing an isotropic random vector $X$ that satisfies $L_4-L_2$ norm equivalence and with constant probability
\begin{equation} \label{eq:S-lower-3}
\mathcal{SW}_2(\mu_m,\mu) \geq c \left(\frac{d}{m}\right)^{1/4}.
\end{equation}
In particular, setting $\Delta \sim d/m$, \eqref{eq:S-lower-3} implies that the term $\Delta^{1/4}$ in Theorem \ref{thm:Wasserstein.small} is the best one can hope for, even if one allows a probability estimate of a constant rather than of $1-\exp(-c_1\Delta m)$.

\begin{Remark}
As it happens, the random vector $X$  we construct satisfies that with constant probability,
\begin{equation} \label{eq:S-lower-2}
\sup_{\theta \in S^{d-1}} \left| \frac{1}{m}\sum_{i=1}^m \inr{X_i,\theta}^2 - 1 \right| \leq c_2 \sqrt{\frac{d}{m} \log\left(\frac{em}{d}\right)},
\end{equation}
showing that there can be a substantial gap between $\mathcal{SW}_2(\mu_m,\mu)$ and $\rho_{d,m}$.
\end{Remark}

\begin{Example} \label{ex:lower-rv}
Let $\beta>0$ to be named in what follows. Set $W$ to be a random vector distributed uniformly in $S^{d-1}$ and let $v$ be a real-valued random variable that is independent of $W$ and takes the values $\beta \sqrt{d}$ with probability $1-\frac{1}{2m}$ and $(md)^{1/4}$ with probability $\frac{1}{2m}$.

The wanted random vector is $X=vW$; it is rotation invariant, and for a well-chosen $\beta \sim 1$ it is also isotropic. Moreover, it is straightforward to verify that with that choice of $\beta$,
$\sup_{\theta \in S^{d-1}} \|\inr{X,\theta}\|_{L_4} \leq C$ for an absolute constant $C$, implying that $X$ satisfies $L_4-L_2$ norm equivalence.

Let $\theta \in S^{d-1}$ and observe that with probability at least $1-\frac{1}{2m}$, $\inr{X,\theta} \leq \beta \sqrt{d}$. 
Thus $F^{-1}_{\mu^\theta}(u) \leq \beta \sqrt{d}$ for $u<  1-\frac{1}{2m}$. On the other hand, with constant probability,
\[
\sup_{\theta \in S^{d-1}} \max_{1 \leq i \leq m} |\inr{X_i,\theta}| =   \max_{1 \leq i \leq m} \|X_i\|_2 = (md)^{1/4},
\]
and on that event there is some $\theta$ such that $F^{-1}_{\mu_m^\theta}(u) = (md)^{1/4}$ for $u > 1-\frac{1}{m}$. Therefore, by Lemma \ref{lem:Wasserstein.via.inverse.functions},
\begin{align*}
\mathcal{W}_2^2\left( \mu_m^\theta,\mu^\theta \right) 
\geq & \int_{1-1/m}^{1-1/2m} \left(F^{-1}_{\mu_m^\theta}(u)-F^{-1}_{\mu^\theta}(u)\right)^2 \,du
\\
\geq & \int_{1-1/m}^{1-1/2m} \left((md)^{1/4} - \beta \sqrt{d} \right)^2 \,du 
\geq \frac{1}{4}\left(\frac{d}{m}\right)^{1/2},
\end{align*}
provided that $m \geq c(\beta)d$. 
Thus, $X$ satisfies \eqref{eq:S-lower-3} with constant probability, as claimed.

Regarding \eqref{eq:S-lower-2}, that follows immediately from Tikhomirov's estimate from \cite{tikhomirov2018sample} on the extremal singular values of random matrices with independent rows. Indeed, $X$ satisfies $L_4-L_2$ norm equivalence and $\|X\|_2 \leq (md)^{1/4}$ almost surely.
\qed
\end{Example}

The random vector $X$ constructed in Example \ref{ex:lower-rv} might be considered `a-typical'. 
Next we turn to an example which shows that even for ``nice" random vectors, the best that one can hope for in Theorem \ref{thm:Wasserstein.small} is an upper bound of order $\Delta^{1/4}$ that holds with probability at least $1-\exp(-c\Delta m)$.

\subsubsection*{The Bernoulli vector}

Let $X$ be distributed uniformly in $\{-1,1\}^d$. In particular, $X$ is isotropic and $L$-subgaussian for an absolute constant $L$. It is standard to verify that for subgaussian random vectors, $\rho_{d,m} \leq c_0 \sqrt{d/m}$ with probability at least $1-2\exp(-c_1 m)$, and hence, Theorem \ref{thm:Wasserstein.small} implies that for such a random vector, with probability at least $1-\exp(-c_2\Delta m)$,
\[
\mathcal{SW}_2(\mu_m,\mu) \leq c_3 \Delta^{1/4}.
\]
Here, we show that for the uniform distribution in $\{-1,1\}^d$,
\[
\mathcal{SW}_2(\mu_m,\mu) \geq c_4 \Delta^{1/4}
\]
with probability at least $c_5\exp(-c_6\Delta m)$.

As it happens, the reason for this lower bound  is actually one dimensional.
It is caused by the behaviour of
\[
\nu = \frac{1}{2}(\delta_{-1}+\delta_1),
\]
corresponding to a marginal of $X$ in a coordinate direction.

\begin{Lemma} \label{lem:Delta.1.4}
There are absolute constants $c_1,c_2$ and $c_3$ such that, for $\Delta<1/4$ and $m\geq 4$, with probability at least $c_1\exp(-c_2\Delta m)$,
	\[  \mathcal{W}_2(\nu_m,\nu) \geq c_3 \Delta^{1/4}.\]
\end{Lemma}

Lemma \ref{lem:Delta.1.4} can be established via a standard lower bound on the binomial distribution and Lemma \ref{lem:Wasserstein.via.inverse.functions}.
For the sake of simplicity the argument we present is based on an well-known estimate due to Montgomery-Smith \cite{montgomery1990distribution}. To formulate that fact, denote by $x^\ast$  the monotone non-increasing rearrangement of $(|x_i|)_{i=1}^m$.

\begin{Lemma} \label{lem:MS}
	There are absolute constants $c_1,c_2$ and $c_3$ such that the following holds.
	Let $(\eps_i)_{i=1}^m$ be distributed as $\nu^{\otimes m}$, consider $x\in\R^m$ and set $0< u\leq m-2$.
	Then, with probability at least $c_1\exp(-c_2u)$,
	\begin{align}
	\label{eq:M.S}
	\sum_{i=1}^m  x_i \eps_i \geq
	c_3 \left( \sum_{i=1}^{\lfloor  u\rfloor}  x^\ast_i + \sqrt{u}  \left( \sum_{i= \lfloor u\rfloor + 1 }^m  (x^\ast_i)^2 \right)^{1/2}\right).
	\end{align}
\end{Lemma}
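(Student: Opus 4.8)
\emph{Proof proposal.} Lemma~\ref{lem:MS} is the one-sided lower half of Montgomery--Smith's theorem on the distribution of Rademacher sums \cite{montgomery1990distribution}, so one option is simply to quote it; below I outline a self-contained argument. The plan is to build the two terms on the right-hand side of \eqref{eq:M.S} on (almost) independent events: the $\ell_1$-term $\sum_{i\le\lfloor u\rfloor}x_i^\ast$ will come from forcing the signs on the largest $\lfloor u\rfloor$ coordinates to be $+1$ --- which costs only a factor $2^{-u}$ --- and the $\ell_2$-term $\sqrt u(\sum_{i>\lfloor u\rfloor}(x_i^\ast)^2)^{1/2}$ from a \emph{reverse Chernoff} estimate for the surviving Rademacher sum. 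First I would normalize: since the $\eps_i$ are i.i.d.\ and symmetric, $\sum_i x_i\eps_i$ has the law of $\sum_i x_i^\ast\eps_i$, so assume $x_1\ge\cdots\ge x_m\ge 0$. Put $k=\lfloor u\rfloor$, $H=\sum_{i=1}^{k}x_i$, $T=\bigl(\sum_{i=k+1}^{m}x_i^2\bigr)^{1/2}$, and note that $B:=\sum_{i=k+1}^{m}x_i\eps_i$ is symmetric, independent of $\eps_1,\dots,\eps_k$, has variance $T^2$, and has largest coefficient $x_{k+1}$.

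Condition on $\{\eps_1=\cdots=\eps_k=1\}$, an event of probability $2^{-k}\ge 2^{-u}$; on it $\sum_i x_i\eps_i=H+B$, so it remains to lower bound $\PP(B\ge r)$ with $r=c_3(H+\sqrt u\,T)-H$. If $r\le 0$, then by symmetry of $B$ we get $\PP(B\ge r)\ge\PP(B\ge 0)\ge\tfrac12$ and we are done. If $r>0$, then on the one hand $r=(c_3-1)H+c_3\sqrt u\,T\le c_3\sqrt u\,T$, and on the other hand $r>0$ forces $(1-c_3)H<c_3\sqrt u\,T$; combined with $H\ge k\,x_{k+1}\ge\tfrac u2\,x_{k+1}$ (for $u\ge 2$) this yields the key bound $x_{k+1}\le\frac{2c_3}{1-c_3}\cdot\frac{T}{\sqrt u}\le\frac{2T}{\sqrt u}$, say for $c_3\le\tfrac12$. \textbf{This is where the sorting of $x$ together with the conditioning pays off:} in the only regime in which the $\ell_2$-term is actually needed, no single surviving coefficient exceeds $\sim T/\sqrt u$, so $B$ genuinely looks Gaussian up to the scale $\sqrt u\cdot T$.

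It then suffices to invoke a reverse Chernoff (anti-concentration at the correct scale) bound: there are absolute constants $c,C$ such that for any Rademacher sum $Y=\sum_i a_i\eps_i$ with $\sigma^2=\sum_i a_i^2$ and $\max_i|a_i|\le b$, and for every $0\le\lambda\le\sigma^2/(Cb)$, one has $\PP(Y\ge\lambda)\ge c\,e^{-C\lambda^2/\sigma^2}$. Applying this to $Y=B$, $\sigma=T$, $b=x_{k+1}\le 2T/\sqrt u$ and $\lambda=c_3\sqrt u\,T$ --- which lies in the admissible range once $c_3$ is chosen small enough --- gives $\PP(B\ge r)\ge\PP(B\ge c_3\sqrt u\,T)\ge c\,e^{-Cc_3^2u}$. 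Multiplying by the $2^{-u}$ cost of the conditioning produces the asserted $c_1 e^{-c_2u}$, and the leftover range $u<2$ is handled by hand (condition on $\eps_1=1$ when $k=1$, then apply the Paley--Zygmund inequality to the rest).

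\textbf{The main obstacle is the reverse Chernoff inequality itself.} I would prove it by splitting $\{1,\dots,m\}$ greedily (largest coefficient first) into $N\sim\lambda^2/\sigma^2$ blocks, each with sum of squares $\gtrsim\sigma^2/N$ --- which is possible precisely because the hypothesis $\lambda\le\sigma^2/(Cb)$ forces $b^2\lesssim\sigma^2/N$ --- then using the Paley--Zygmund inequality (with the bound $\E Y_\ell^4\le 3(\E Y_\ell^2)^2$ for a Rademacher sum $Y_\ell$ over a block) to give each block's sub-sum probability $\ge c_0$ of exceeding $\tfrac12(\sigma^2/N)^{1/2}$, and multiplying the $N$ independent block events to reach level $\sim\sqrt N\,\sigma\sim\lambda$ with probability $c_0^{N}=e^{-(\log(1/c_0))N}$. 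Everything else is bookkeeping of absolute constants.
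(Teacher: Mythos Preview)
The paper does not prove Lemma~\ref{lem:MS}; it simply attributes the estimate to Montgomery--Smith \cite{montgomery1990distribution} and uses it as a black box. Your first option---just quoting it---is therefore exactly what the paper does.

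Your self-contained argument is correct and is essentially the standard route to the lower half of Montgomery--Smith's theorem: condition on the signs of the $k=\lfloor u\rfloor$ largest coordinates to secure the $\ell_1$-part at cost $2^{-u}$, then run a reverse-Chernoff estimate on the tail sum to produce the $\ell_2$-part. The one place worth a remark is the feasibility of the block decomposition in your reverse-Chernoff step (forming $N\sim c_3^2 u$ blocks each of variance $\gtrsim T^2/N$): this is automatic in the only regime where it is invoked, since $r>0$ forces $x_{k+1}\le 2T/\sqrt u$, which in turn guarantees at least $T^2/x_{k+1}^2\ge u/4$ nonzero surviving coefficients---more than the $N\le u/8$ blocks you need once $c_3$ is small. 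So nothing is missing; compared to the paper you are simply supplying a proof where the paper supplies a citation.
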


\begin{proof}[Proof of Lemma \ref{lem:Delta.1.4}]
Let $\nu_m=\frac{1}{m}\sum_{i=1}^m\delta_{\eps_i}$ and note that for $x=(\frac{1}{m},...,\frac{1}{m})$ and $u=c_0\Delta m$ for an absolute constant $c_0$,  the RHS in \eqref{eq:M.S} is at least $c_1\sqrt \Delta$. Moreover, when \eqref{eq:M.S} holds,
	\[
F_{\nu_m}(-1) \leq \tfrac{1}{2}-c_2\sqrt\Delta.
\]
Hence, $F_{\nu_m}^{-1}(u)=1$ for $u\in (\frac{1}{2}-c_2\sqrt\Delta, 1]$, but $F_\nu^{-1}(u)=-1$ for $u\in[0,\frac{1}{2}]$.
Therefore,
		\[ \int_{1/2-c_2\sqrt\Delta }^{1/2}  \left( F_{\nu_m}^{-1}(u) - F_{\nu}^{-1}(u)\right)^2 \,du
	= c_2\sqrt\Delta
\]
	and the claim follows from Lemma \ref{lem:Wasserstein.via.inverse.functions}.
\end{proof}

\begin{Remark}
	By combining \eqref{eq:S-coupling-1} and Lemma \ref{lem:MS}, it is straightforward to show that even  `regular' random vectors satisfy $\mathcal{SW}_2(\mu_m,\mu)\geq c_1\sqrt\Delta$  with probability at least $c_2\exp(-c_3\Delta m)$.
	In particular, the estimate of $\sim\Delta^{1/2}$ is the best one can hope for even when $m\gg d$.
\end{Remark}

\section{Variations on a theme}
\label{sec:Variations}

\subsubsection*{The first order max-sliced Wasserstein distance}

	Recall that  the first order Wasserstein distance is defined on $\mathcal{P}_1(\R^d)$---the Borel probability measures on $\R^d$ with finite first moment---by
	\[\mathcal{W}_1(\nu,\tau)
	=\inf_{\Pi \text{ coupling}} \int_{\R^d\times \R^d} \|x-y\|_2\,\Pi(dx,dy).
\]
	In particular, by H\"older's inequality, $\mathcal{W}_1\leq\mathcal{W}_2$.
	And, just like $\mathcal{SW}_2$, one can define the first order max-sliced	Wasserstein distance by
	$\mathcal{SW}_1(\mu,\nu)=\sup_{\theta\in S^{d-1}} \mathcal{W}_1(\mu^\theta,\nu^\theta)$.
	
	\begin{Proposition}	
	\label{cor:W1}

	 Let $X$ be centred and isotropic, and assume that $\sup_{\theta \in S^{d-1}} \|\inr{X,\theta}\|_{L_q} \leq L$ for some $q>4$.
	Then there are constants $c_0$ and $c_1$ that depend on $q$ and $L$ and absolute constants $c_2,c_3$ such that the following holds.

	Let  $0<\Delta\leq c_0$ and set $m \geq c_2 \frac{d}{\Delta}$.
	With probability at least $1-\exp(-c_3 \Delta m)$,
	\[ 
	\mathcal{SW}_1\left( \mu_m , \mu\right) 
	\leq  c_1 \left( 	\rho_{d,m } +  \sqrt{ \Delta} \right).
	\]
	\end{Proposition}
	
	The dependence on $\Delta$ in  Proposition \ref{cor:W1} is  optimal, as can be seen using the arguments presented in Section \ref{sec:lower-W2}.
	In particular, even if $X$ is log-concave,  $\mathcal{SW}_1(\mu_m,\mu)\geq c_1\sqrt{d/m}$ with constant probability and $\mathcal{SW}_1(\mu_m,\mu)\geq c_4\sqrt\Delta$ with probability at least $c_2\exp(-c_3\Delta m)$.

	\begin{proof}[Proof of Proposition \ref{cor:W1} (sketch)]
	The proof follows along the same lines as the proof for $\mathcal{SW}_2$---making use of the representation $\mathcal{W}_1(\mu^\theta_m,\mu^\theta)=\int_0^1 |F^{-1}_{\mu^\theta_m}(u)-F^{-1}_{\mu^\theta}(u)|\,du$.
	A minor modification is needed when estimating 
	\[(1)=\int_{[0,1]\setminus(\delta,1-\delta)} |F^{-1}_{\mu^\theta_m}(u)|\,du. \]
	To that end, note that by the Paley–Zygmund inequality  there are constants $c_1$ and $c_2$ depending only on $L$ such that for every $\theta\in S^{d-1}$, $F^{-1}_{\mu^\theta}(c_1)\leq -c_2$ and $F^{-1}_{\mu^\theta}(1-c_1)\geq c_2$.
	Fix $\delta\leq c_1/2$ and consider a realization of $(X_i)_{i=1}^m$ for which \eqref{eq:ratio.estiamte} holds.
	Lemma \ref{lem:psi.pm} and Lemma \ref{lem:inverse.estimate} imply that  for $u\in[0,\delta]$,
	\[ F^{-1}_{\mu^\theta_m}(u)
	\leq F^{-1}_{\mu^\theta_m}(\delta)
	\leq F^{-1}_{\mu^\theta}(2\delta)
	\leq F^{-1}_{\mu^\theta}(c_1)
	\leq -c_2;\]
	and using identical arguments, for $u\in[1-\delta,1]$, $F^{-1}_{\mu^\theta_m}(u)\geq c_2$.
	Hence
	\[(1) \leq \frac{1}{c_2}\int_{[0,1]\setminus(\delta,1-\delta)} |F^{-1}_{\mu^\theta_m}(u)|^2\,du\]
	and the estimate on $(1)$  follows from  Lemma \ref{lem:tail.integral.bounded.by.Bai.Yin.plus.ratio}.
	\end{proof}
	
	\begin{Remark}
	It is likely that the assumption $q>4$ in Proposition \ref{cor:W1} can be relaxed, in which case $\rho_{d,m}$ should be replaced by (an estimate on) $\sup_{\theta\in S^{d-1}} \frac{1}{m} \sum_{i=1}^{\delta m} |\inr{X_i,\theta}|^\ast$.
	\end{Remark}
	
	We cut further discussions on $\mathcal{SW}_1$ short because in the context of this note,  controlling $\mathcal{SW}_1(\mu_m,\mu)$ is significantly less interesting than controlling  $\mathcal{SW}_2(\mu_m,\mu)$.
	The reason for that is the Kantorovich-Rubinstein duality:
	\[\mathcal{SW}_1(\nu,\mu)
	=\sup_{\Phi\colon\R\to\R \text{ is  $1$-Lipschitz}} \left( \int_{\R^d} \Phi(\inr{\,\cdot\, ,\theta})\,d\nu - \int_{\R^d} \Phi(\inr{\,\cdot\, ,\theta}) \,d\mu\right).\]
	Therefore, $\mathcal{SW}_1$ is the difference of integrals rather than the integral of differences (like in the case of $\mathcal{SW}_2)$, making it significantly easier to control.	
	
	In the next section we present applications of a similar flavour,  where the key is again controlling differences of certain integrals.

\subsubsection*{Uniform estimation of increasing functions}

There are many diverse applications where one is concerned with estimating $\E\Phi(\inr{X,\theta})$ uniformly over $\theta\in S^{d-1}$ for certain choices of $\Phi$ and the given data is an independent sample $(X_i)_{i=1}^m$.
For instance, the choice $\Phi(x)=x^2$ corresponds to covariance estimation where the natural (yet surprisingly suboptimal) idea was to use the empirical quadratic mean as an estimator and bound 
\begin{align}
\label{eq:cov.estimate}
\sup_{\theta\in S^{d-1}} \left| \frac{1}{m}\sum_{i=1}^m \inr{X_i,\theta}^2 - \E\inr{X,\theta}^2 \right|.
\end{align}
When $X$ is  isotropic, \eqref{eq:cov.estimate} is simply $\rho_{d,m}$.
And, as noted previously, the question of controlling $\rho_{d,m}$ has been studied extensively under various conditions on the isotropic vector $X$.

Another natural choice is $\Phi(x)=|x|^p$, see e.g., \cite{guedon2007lp} in the log-concave case, where again, the empirical $p$-mean was the chosen estimator and $\sup_{\theta\in S^{d-1}} | \frac{1}{m}\sum_{i=1}^m |\langle X_i,\theta\rangle|^p - \E |\langle X,\theta\rangle|^p |$ was studied.

Unfortunately, the empirical mean has a major drawback (particularly in high dimensions): the  error  deteriorates dramatically when $X$ has tails that are heavier than  a gaussian---see \cite{lugosi2019mean} for a survey and recent developments in the direction.
For example, the main result in \cite{mendelson2021approximating} focuses on the case where $\Phi(x)=|x|^p$, and it is shown that a modified estimator
 \[\widehat{\mathcal{E}}(\Phi,\theta,\delta,(X_i)_{i=1}^m)=\frac{1}{m}\sum_{i=\lfloor \delta m+1\rfloor}^{\lfloor m-\delta m \rfloor} \Phi\left( \inr{X_i,\theta}^\sharp\right)\]
almost recovers the best possible statistical error once the parameter $\delta$ is suitably chosen.
The proof in \cite{mendelson2021approximating} relies heavily on the fact that $\Phi(x)=|x|^p$, but as we show here, this actually has nothing to do with the particular structure of $\Phi$.
As it happens, the optimal estimate holds uniformly over monotone functions that merely satisfy a compatible growth condition.
Indeed, let $p\in[1,\infty)$,  $C\geq 1$, and set 
\[ \mathcal{I}_{p,C}=\left\{ \Phi\colon\mathbb{R}\to\mathbb{R} : \Phi \text{ non-decreasing and }\sup_{x\in\R} \frac{|\Phi(x)|}{1 + |x|^p} \leq C \right\}.\]

\begin{Proposition}
\label{prop:estimate.increasing.functions}
	Let $X$ be centred and isotropic and assume that $\sup_{\theta \in S^{d-1}} \|\inr{X,\theta}\|_{L_q} \leq L$ for some $q\geq 2p$.
	Then there are absolute constants $c_0,c_1,c_2,c_3$ and a constant $c_4$ that depends only on $p$, $q$ and $L$ such that the following holds.

	Let  $0<\Delta\leq c_0$, set $m \geq c_1 \frac{d}{\Delta}$, and consider $\delta=c_2\Delta\log^2(\frac{e}{\Delta})$.
	With probability at least $1-\exp(-c_3 \Delta m)$,
	for all $\Phi\in \mathcal{I}_{p,C}$ and $\theta\in S^{d-1}$, 
	\begin{align*}
	\left| \widehat{\mathcal{E}}(\Phi,\theta,\delta,(X_i)_{i=1}^m)- \E\Phi(\inr{X,\theta} ) \right|
	\leq c_4 C
	\begin{cases}
	 \sqrt{ \Delta}   &\text{if } q>2p,\\
	\sqrt\Delta \log^2\left(\frac{1}{\Delta}\right)  &\text{if } q=2p.
	\end{cases} 
	\end{align*}
\end{Proposition}

\begin{Remark}
	Note that on the `good event' of Proposition \ref{prop:estimate.increasing.functions}, one may use \emph{any} function $\Phi\in \mathcal{I}_{p,C}$.
\end{Remark}

	In a similar fashion to the argument used in Section \ref{sec:lower-W2} one can show that the requirement that $m\geq c\frac{d}{\Delta}$ is necessary.
	Moreover,  even for  $\Phi(x)=x$ and a fixed $\theta\in S^{d-1}$,  the error in Proposition \ref{prop:estimate.increasing.functions} is the best that one can hope for in the following sense: there are suitable constants $c_1,c_2,c_3$ such that for every (fixed) estimator\footnote{In this context, an estimator is a functional $\widehat{\mathcal{E}}'\colon [0,1]\times (\R^d)^m\to\R$ that receives as input the sample $(X_i)_{i=1}^m$ and the wanted accuracy $\Delta$.},  $|\widehat{\mathcal{E}}'(\Delta,(X_i)_{i=1}^m) -\E\inr{X,\theta}|\geq c_1\sqrt{\Delta}$ with probability at least $ c_2\exp(-c_3\Delta m)$ (see \cite{catoni2012challenging} for a precise statement).
	In particular, Proposition \ref{prop:estimate.increasing.functions} is optimal for $q>2p$.

	\begin{Remark}
	The assertion of Proposition \ref{prop:estimate.increasing.functions} remains valid when extending $\mathcal{I}_{p,C}$---by adding functions of the form $\Phi(|\cdot|)$ for $\Phi\in \mathcal{I}_{p,C}$, i.e.\ replacing  $\mathcal{I}_{p,C}$ by
	\[\mathcal{I}_{p,C}'=\mathcal{I}_{p,C}\ \cup \{ \Phi(|\cdot|) : \Phi\in \mathcal{I}_{p,C}\};\]
	the modifications needed in the proof are minor and are omitted.
	
	Applied to $\mathcal{I}_{p,C}'$, Proposition \ref{prop:estimate.increasing.functions} is an improvement of  the main result in \cite{mendelson2021approximating}.
	The latter  has the restriction $\Delta \geq c_1' \frac{d}{m}\log(\frac{em}{d})$ rather than  $\Delta \geq c_1' \frac{d}{m}$.
	\end{Remark}

\begin{proof}[Proof of Proposition \ref{prop:estimate.increasing.functions}]
	We may and do assume without loss of generality that $\delta m$ is integer.
	Fix a realization $(X_i)_{i=1}^m$ that satisfies \eqref{eq:ratio.estiamte}, denote by $\kappa$  the absolute constant in Lemma \ref{lem:psi.pm} and set $\delta=\kappa\Delta \log^2(\frac{e}{\Delta})$.
	Lemma \ref{lem:inverse.estimate} and the monotonicity of $\Phi$ imply that
	\begin{align*}
		\widehat{\mathcal{E}}(\Phi,\theta,\delta,(X_i)_{i=1}^m)
		&=\int_{\delta}^{1-\delta} \Phi\left( F^{-1}_{\mu^\theta_m}(u) \right) \,du
		\leq \int_{\delta}^{1-\delta} \Phi\left( F^{-1}_{\mu^\theta}(\psi_+(u)) \right) \,du.	
	\end{align*}
	Moreover, by a change of variable $u\leftrightarrow\psi_+(u)$, it is evident that
	\begin{align*}
	&\int_{\delta}^{1-\delta} \Phi\left( F^{-1}_{\mu^\theta}(\psi_+(u)) \right) \,du \\
	&=\int_{\psi_+(\delta)}^{\psi_+(1-\delta)} \Phi\left( F^{-1}_{\mu^\theta}(v) \right) \,dv  +\int_{\delta}^{1-\delta} \Phi\left( F^{-1}_{\mu^\theta}(\psi_+(u)) \right)(1-\psi'_+(u)) \,du
	=(1) + (2).
	\end{align*}
	Next, it follows from the growth assumption on $\Phi$ and the estimate on $F_{\mu^\theta}$ from the tail decay of the marginal (see \eqref{eq:Wasserstein.tail.decay.norm.equivalence})  that there is a constant $c_1=c_1(p,q,L)$ such that  $|\Phi(F^{-1}_{\mu^\theta}(u))|\leq c_1 Cu^{-p/q}$ for all $u\in(0,1)$.
	Setting $U=[0,1]\setminus(\psi_+(\delta),\psi_+(1-\delta))$, there is a constant $c_2=c_2(p,q,L)$ such that
	\[ (1)- \E\Phi(\inr{X,\theta})
	=\int_U \Phi\left( F^{-1}_{\mu^\theta}(v) \right)\,dv
	\leq  c_2 C 
	\begin{cases}
	\sqrt{\Delta} &\text{if } q>2p, \\
	\sqrt{\Delta}\log(\frac{1}{\Delta}) &\text{if } q=2p.
	\end{cases}
	\]
	In a similar fashion, it is straightforward  to verify that there is a constant $c_3=c_3(p,q,L)$ such that 
	\[(2) \leq c_3 C 
	\begin{cases}
	\sqrt{\Delta} &\text{if } q>2p, \\
	\sqrt{\Delta}\log^2(\frac{1}{\Delta}) &\text{if } q=2p.
	\end{cases}\]
	Hence,
	\[ \widehat{\mathcal{E}}(\Phi,\theta,\delta,(X_i)_{i=1}^m)-\E\Phi(\inr{X,\theta})
	\leq  (c_2+c_3) C
	\begin{cases}
	\sqrt{\Delta} &\text{if } q>2p, \\
	\sqrt{\Delta}\log^2(\frac{1}{\Delta}) &\text{if } q=2p.
	\end{cases}\]
	The proof of the corresponding lower bound follows from an identical argument and is omitted.
\end{proof}

\vspace{1em}
\noindent
{\bf Acknowledgements:}
This research was funded in whole or in part by the
Austrian Science Fund (FWF) [doi: 10.55776/P34743 and 10.55776/ESP31] and the Austrian National Bank [Jubil\"aumsfond, project 18983].
The authors would like to thank the referee for suggestions that helped to improve the paper.

\bibliographystyle{abbrv}

\end{document}